\documentclass[11pt]{amsart}
\textheight 8.5in \textwidth 6.5in \evensidemargin .1in \oddsidemargin .1in \topmargin .25in \headsep .1in \headheight 0.2in \footskip .5in

\usepackage{graphicx}
\usepackage[mathcal]{euscript}
\usepackage{float}
\usepackage{cite}
\usepackage{verbatim}

\restylefloat{figure}

\newtheorem{theorem}{Theorem}[section]
\newtheorem{lemma}[theorem]{Lemma}
\newtheorem{corollary}[theorem]{Corollary}
\newtheorem{proposition}[theorem]{Proposition}

\theoremstyle{definition}

\newtheorem{example}[theorem]{Example}

\theoremstyle{remark}

\numberwithin{equation}{section}

\def\Span{\operatorname{span}}
\def\alt{\operatorname{alt}}

\usepackage{tikz}

\usetikzlibrary{arrows,shapes,decorations,backgrounds,patterns}

\pgfdeclarelayer{background}
\pgfdeclarelayer{background2}
\pgfdeclarelayer{background2a}
\pgfdeclarelayer{background2b}
\pgfdeclarelayer{background3}
\pgfdeclarelayer{background4}
\pgfdeclarelayer{background5}
\pgfdeclarelayer{background6}
\pgfdeclarelayer{background7}

\pgfsetlayers{background7,background6,background5,background4,background3,background2b,background2a,background2,background,main}

\definecolor{lsupurple}{RGB}{70,29,124}
\definecolor{lsugold}{RGB}{253,208, 35}

\begin{document}

\title{Turaev genus and alternating decompositions }

\author{Cody W. Armond}
\address{Department of Mathematics\\
The University of Iowa\\
Iowa City, IA}
\email{cody-armond@uiowa.edu}

\author{Adam M. Lowrance}
\address{Department of Mathematics\\
Vassar College\\
Poughkeepsie, NY} 
\email{adlowrance@vassar.edu}

\subjclass{}
\date{}

\begin{abstract}
We prove that the genus of the Turaev surface of a link diagram is determined by a graph whose vertices correspond to the boundary components of the maximal alternating regions of the link diagram. Furthermore, we use these graphs to classify link diagrams whose Turaev surface has genus one or two, and we prove that similar classification theorems exist for all genera.
\end{abstract}

\maketitle

\section{Introduction}
The discovery of the Jones polynomial \cite{Jones:Polynomial} led to the resolution of the famous Tait conjectures. In particular, Kauffman \cite{Kauffman:Bracket}, Murasugi \cite{Murasugi:JonesConjectures}, and Thistlethwaite \cite{Thistlethwaite:Breadth} use the Jones polynomial to prove that an alternating diagram of a link with no nugatory crossings has the fewest possible number of crossings. In Turaev's \cite{Turaev:SimpleProof} alternate proof of this result, he associates a closed oriented surface to each link diagram $D$, now known as the Turaev surface of $D$. Let $D$ be a diagram of a non-split link $L$ with $c(D)$ crossings, let $V_L(t)$ be the Jones polynomial of $L$, and let $g_T(D)$ be the genus of the Turaev surface of $D$. Turaev shows that
\begin{equation}
\label{eq:TuraevJonesBound}
\Span V_L(t) + g_T(D) \leq c(D).
\end{equation}
In recent years, the Turaev surface has been shown to have further connections to the Jones polynomial \cite{DFKLS:Jones,DFKLS:Determinant}, Khovanov homology \cite{CKS:KhovanovTuraev, DasLow:KhovanovTuraev}, and knot Floer homology \cite{Low:HFKTuraev, DasLow:TuraevConcordance}. 

Thistlethwaite \cite{Thistlethwaite:Breadth} uses a decomposition of a link diagram into maximal alternating pieces to compute a lower bound on crossing number similar to Inequality \eqref{eq:TuraevJonesBound}. Consider a link diagram $D$ as $4$-valent plane graph with over/under decorations at the vertices. An edge or face of $D$ should be understood to refer to an edge or face of the $4$-valent plane graph. An edge of $D$ is called {\em non-alternating} if both of its endpoints are over-strands or both of its endpoints are under-strands. An edge is called {\em alternating} if one of its endpoints is an over-strand and the other is an under-strand. Mark each non-alternating edge of $D$ with two distinct points, and in each face of $D$ connect those marked points with arcs as depicted in Figure \ref{fig:arcs}. This process results in a collection of pairwise disjoint simple closed curves $\{\gamma_1,\dots,\gamma_k\}$. The pair $(D,\{\gamma_1,\dots,\gamma_k\})$ is called the {\em alternating decomposition} of $D$.
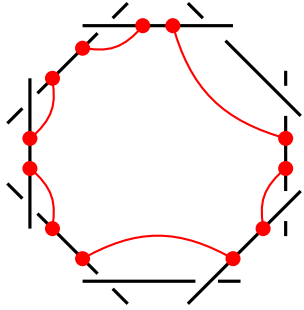
\begin{figure}[h]
$$\begin{tikzpicture}
\begin{scope}[very thick]
	\draw (.7,0) -- (2.2,0);
	\draw (2.5,0) -- (2.8,0);
	\draw (2.1,-.3) -- (3.6,1.2);
	\draw (3.4,1.2) -- (3.4,2.2);
	\draw (3.4, .8) -- (3.4,.6);
	\draw (3.4, 2.6) -- (3.4,2.8);
	\draw (3.6,2.2) -- (2.6,3.2);
	\draw (2.3,3.5) -- (2.1,3.7);
	\draw (2.7,3.4) -- (.7,3.4);
	\draw (.9,3.3) -- (.1,2.5);
	\draw (1.1,3.5) -- (1.3,3.7);
	\draw (-.1,2.3) -- (-.3,2.1);
	\draw (0,2.7) -- (0,.7);
	\draw (0.1,.9) -- (.9,.1);
	\draw (-.1,1.1) -- (-.3,1.3);
	\draw (1.1,-.1) -- (1.3,-.3);
\end{scope}

\node (1) at (.7,.3){};
\node (2) at (.3,.7){};
\node (3) at (0,1.5) {};
\node (4) at (0,1.9) {};
\node (5) at (.3,2.7) {};
\node (6) at (.7,3.1) {};
\node (7) at (1.5,3.4) {};
\node (8) at (1.9, 3.4) {};
\node (9) at (3.4,1.9) {};
\node (10) at (3.4,1.5) {};
\node (11) at (3.1,.7) {};
\node (12) at (2.7,.3) {};

\begin{scope}[line/.style={shorten >=-0.2cm,shorten <=-0.2cm},thick,red]
\fill (1) circle (.1cm);
\fill (2) circle (.1cm);
\fill (3) circle (.1cm);
\fill (4) circle (.1cm);
\fill (5) circle (.1cm);
\fill (6) circle (.1cm);
\fill (7) circle (.1cm);
\fill (8) circle (.1cm);
\fill (9) circle (.1cm);
\fill (10) circle (.1cm);
\fill (11) circle (.1cm);
\fill (12) circle (.1cm);
\path [bend left, line]   (1) edge (12);
\path [bend left, line]   (3) edge (2);
\path [bend left, line]   (5) edge (4);
\path [bend left, line]   (7) edge (6);
\path [bend left, line]   (9) edge (8);
\path [bend left, line]   (11) edge (10);

\end{scope}
\end{tikzpicture}$$
\caption{Each non-alternating edge is marked with two points. Inside of each face, draw arcs that connect marked points that are adjacent on the boundary but do not lie on the same edge of $D$.}
\label{fig:arcs}
\end{figure}

Thistlethwaite associates to $D$ a graph $G$, which we call the {\em alternating decomposition graph} of $D$, as follows. Suppose that $D$ is a connected link diagram, i.e. when $D$ is considered as a graph, it is a connected graph. If $D$ is an alternating diagram, then $G$ is a single vertex with no edges. Otherwise, the vertices of $G$ are in one-to-one correspondence with the curves $\gamma_1,\dots, \gamma_k$ of the alternating decomposition of $D$. The edges of $G$ are in one-to-one correspondence with the non-alternating edges of $D$. Let $v_i$ and $v_j$ be vertices of $G$ corresponding to curves $\gamma_i$ and $\gamma_j$ respectively. An edge of $G$ connects $v_i$ to $v_j$ if and only if the corresponding non-alternating edge of $D$ intersects both $\gamma_i$ and $\gamma_j$. If $D$ is not a connected link diagram, then $G$ is the disjoint union of the alternating decomposition graphs of its connected components. 

The plane embedding of $D$ induces an embedding of each component of $G$ onto a sphere, as described in Section \ref{section:AltDecomp}. Since each component of $G$ can be embedded on a sphere, the graph $G$ is planar. Whenever we refer to $G$ with the sphere embeddings of its components induced by $D$, we use the notation $\mathbb{G}$ and call it the {\em sphere embedding induced by $D$}. We also consider $\mathbb{G}$ as an oriented ribbon graph of genus zero. See Section \ref{section:AltDecomp} for further discussion on oriented ribbon graphs.  Each edge of $G$ can be labeled as ``$+$" or ``$-$" according to whether it corresponds to an over-strand edge of $D$ or an under-strand edge of $D$ respectively. Since the edges in each face of $\mathbb{G}$ rotate between ``$+$" and ``$-$" edges, it follows that every face has an even number of edges in its boundary. Therefore $G$ is bipartite. Also, since every curve $\gamma_i$ encloses a tangle, it follows that every vertex of $G$ has even degree. Proposition \ref{prop:altdecompgraph} below shows that a graph is an alternating decomposition graph if and only if it is planar, bipartite, and each vertex has even degree. See Section \ref{section:AltDecomp} for examples of alternating decompositions of link diagrams and their associated alternating decomposition graphs.

If $D$ has alternating decomposition curves $\{\gamma_1,\dots,\gamma_k\}$, then an {\em alternating region} of $D$ is a component of $S^2-\{\gamma_1,\dots,\gamma_k\}$ that contains crossings of $D$. As the name suggests, if one follows a strand inside of an alternating region of $D$, then the crossings will alternate between over and under. Let $r_{\text{alt}}(D)$ be the number of alternating regions in the alternating decomposition of $D$, and let $e(G)$ be the number of edges in $G$. Note that $e(G)$ is also the number of non-alternating edges in $D$. Thistlethwaite \cite{Thistlethwaite:Breadth} proves that if $D$ is a connected diagram of the link $L$, then
\begin{equation}
\label{eq:thineq}
\Span V_{L}(t)  - r_{\text{alt}}(D) + \frac{1}{2} e(G) +1\leq c(D).
\end{equation}
Bae and Morton \cite{BaeMorton:Spread} use Thistlethwaite's approach to study the extreme terms and the coefficients of the extreme terms in the Jones polynomial. Using combinatorial data from the planar dual of $\mathbb{G}$, a graph they call the {\em non-alternating spine} of $D$, they recover Inequality \eqref{eq:TuraevJonesBound} and show that it is a stronger bound than Inequality \eqref{eq:thineq}.

In this paper, we use Thistlethwaite's alternating decompositions to study the Turaev surface of a link diagram. We show that the genus of the Turaev surface of a link diagram is determined by its alternating decomposition graph. If the Turaev surface is disconnected, then its genus refers to the sum of the genera of its connected components.
\begin{theorem}
\label{thm:graphTuraev}
If $D_1$ and $D_2$ are link diagrams with isomorphic alternating decomposition graphs, then $g_T(D_1) = g_T(D_2)$.
\end{theorem}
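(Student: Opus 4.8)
\emph{Proof proposal.} The plan is to express $g_{T}(D)$ through the Euler characteristic of the Turaev surface, to localize that data to the alternating regions of the decomposition, and to show that each alternating region contributes only through the local combinatorics recorded by $G$.

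First I would reduce the theorem to a statement about Kauffman states. Recall that if $D$ has $n$ connected components, then the Turaev surface $\Sigma_{T}(D)$ has exactly $n$ connected components and $\chi(\Sigma_{T}(D)) = s_{A}(D) + s_{B}(D) - c(D)$, where $s_{A}(D)$ and $s_{B}(D)$ are the numbers of loops in the all-$A$ and all-$B$ states. Hence
$g_{T}(D) = n + \tfrac12\bigl(c(D) - s_{A}(D) - s_{B}(D)\bigr)$, and since $n = b_{0}(G)$, Theorem~\ref{thm:graphTuraev} is equivalent to the claim that $c(D) - s_{A}(D) - s_{B}(D)$ is an isomorphism invariant of $G$.

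Next I would set up the alternating decomposition geometrically on $S^{2}$: the complement $S^{2}\smallsetminus\bigcup_{i}\gamma_{i}$ is a disjoint union of alternating regions, each carrying an alternating tangle, and non-alternating regions, each a neighborhood of non-alternating edges carrying no crossing. Since the $A$- and $B$-smoothings do nothing at a non-alternating edge, the all-$A$ (resp.\ all-$B$) state of $D$ is assembled by taking the corresponding state of each alternating tangle and reconnecting the resulting arcs along the non-alternating edges, where the reconnection pattern is precisely the combinatorial data of $\mathbb{G}$. The technical heart is a local lemma: if $T$ is an alternating tangle filling an alternating region whose boundary meets the incident non-alternating edges in $d$ points, then the all-$A$ state of $T$ and the all-$B$ state of $T$ each induce a fixed planar matching on those $d$ points, determined by the cyclic pattern of over/under ($\pm$) labels and not by the crossings of $T$, and moreover $\bigl(\#A\text{-loops of }T\bigr) + \bigl(\#B\text{-loops of }T\bigr) - c(T)$ depends only on $d$. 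I would prove this by induction on $c(T)$: remove a suitably chosen crossing of the alternating tangle so that the remaining diagram stays alternating, and control the change in $s_{A}$ and $s_{B}$ using that alternating diagrams are adequate; the base case is Kauffman's identity $s_{A}+s_{B}=c+2$ for an alternating link diagram.

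Granting the local lemma, I would globalize: $s_{A}(D)+s_{B}(D)$ equals the sum of the local $A$- and $B$-loop counts over the alternating regions, plus the number of loops produced by reconnecting the (now fixed) boundary matchings across the non-alternating edges, corrected for arcs counted locally. Feeding in the lemma, $c(D)-s_{A}(D)-s_{B}(D)$ becomes a function of the matching–reconnection pattern of $\mathbb{G}$ alone. The last step is to check that this function in fact depends only on the abstract graph $G$ and not on the induced sphere embedding $\mathbb{G}$ — equivalently, that it is unchanged under the Whitney-type re-embedding moves (flips at cut vertices and $2$-cuts) relating the different planar embeddings of a fixed planar graph; combined with $n = b_{0}(G)$, this gives the theorem.

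I expect the main obstacle to be the local lemma — in particular, proving the rigidity of the two boundary matchings for an arbitrary alternating tangle (so that any two alternating tangles with the same number of boundary points produce the same pair of matchings) and the accompanying conservation identity — together with the final verification that the globalized count is genuinely insensitive to the embedding rather than merely an invariant of $\mathbb{G}$.
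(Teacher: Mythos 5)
Your opening reduction contains an error that makes the reduced statement false. The number of split components of $D$ is not $b_0(G)$: Figure \ref{figure:annulus} shows a connected diagram whose alternating decomposition graph is $C_2^2\sqcup C_2^2$, while the disjoint union of two copies of the diagram in Figure \ref{figure:9_42} has the same graph but two split components. Both have $g_T=2$, so by \eqref{equation:Turaevgenus} the quantity $c(D)-s_A(D)-s_B(D)=2g_T(D)-2k(D)$ equals $2$ for the first diagram and $0$ for the second; it is therefore \emph{not} an invariant of $G$, and the theorem is not equivalent to the claim you reduce to. The repair must keep $k(D)$ in the bookkeeping, and relatedly your local conservation count is $\chi(R)-d/2+c(T)$ rather than a function of $d$ alone: alternating regions need not be disks (again Figure \ref{figure:annulus}), so the number of boundary circles of the region enters, and one needs the extra relation $k(D)-r_{\text{alt}}(D)=k(G)-v(G)$ before $g_T(D)$ becomes a function of $G$ and of the gluing data. (The matching-rigidity half of your local lemma is fine, and is more cleanly seen from checkerboard colorings than by deleting crossings: each face of an alternating region meets its boundary in at most one arc, so the $A$- and $B$-arcs simply pair adjacent boundary points across the faces of the two colors.)

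The more serious gap is that your ``last step'' is not a verification but the actual content of the theorem. After localization, everything reduces to $N$, the number of circles obtained by gluing the boundary arcs along the non-alternating edges; this $N$ is exactly the number of boundary components of the all-edges-twisted ribbon graph $\widetilde{\mathbb{G}}$ built from the sphere embedding induced by $D$ (this is Proposition \ref{prop:twisted}), and its independence of the embedding is where the work lies: it is Theorem \ref{thm:EmbeddingIndependent}, which the paper proves by induction on edges using Lemma \ref{lemma:degree2} together with a case analysis of how the two twisted bands of a parallel pair sit on the boundary circles (Figure \ref{figure:parallel}). Invariance of $N$ under Whitney flips at cut vertices and $2$-cuts is not obvious: a flip reverses the cyclic orders of half-edges in the flipped piece, and one must genuinely argue that the boundary count of the twisted (partial Petrie dual) structure is unchanged; your sketch asserts this rather than proving it. So while the state-sum localization is a legitimately different route from the paper's, as written it both starts from a false reduction and defers the theorem's essential difficulty to an unproved claim.
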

Champanerkar and Kofman \cite{CK:Twisting} prove a version of Theorem \ref{thm:graphTuraev} in the case where the two link diagrams are related by a rational tangle replacement. Lowrance \cite{Low:Twisted} uses this special case to compute the Turaev genus of the $(3,q)$-torus links and of many other closed $3$-braids (see also \cite{AbeKish:Dealternating}).

The {\em Turaev genus} of an alternating decomposition graph $G$, denoted $g_T(G)$, is defined to be $g_T(D)$ where $D$ is a link diagram with alternating decomposition graph $G$. Corollary \ref{cor:TuraevAlgorithm} gives a recursive algorithm to compute $g_T(G)$ without any reference to link diagrams. Theorem \ref{thm:graphTuraev} coupled with our algorithm for computing $g_T(G)$ show that the genus of the Turaev surface is determined by how the various alternating regions of $D$ are glued together along the non-alternating edges of $D$. The recursive algorithm is at the core of our classification theorems.

A {\em doubled path} of length $k$ in $G$ is a subgraph of $G$ consisting of distinct vertices $v_0,\dots, v_k$ such that for each $i=1,\dots, k$ there are two distinct edges $e_{i,1}$ and $e_{i,2}$ in $G$ connecting vertices $v_{i-1}$ and $v_i$ and such that $\deg v_i=4$ for $i=1,\dots, k-1$. If $G$ is a graph with a doubled path consisting of vertices $v_0,\dots, v_k$, then let $G'$ be $G/\{e_{i,1}\cup e_{i,2}\}$, the contraction of $e_{i,1}$ and $e_{i,2}$ from $G$ for some $i$ with $1\leq i \leq k$. Then $G'$ is called a {\em doubled path contraction} of $G$. The inverse operation of lengthening a doubled path inside of $G$ is called a {\em doubled path extension} of $G$. Two alternating decomposition graphs $G_1$ and $G_2$ are called {\em doubled path equivalent} if there is a sequence of doubled path contractions and extensions transforming $G_1$ into $G_2$. Doubled path contraction/extension can make a graph non-bipartite (and hence not an alternating decomposition graph), but we do not require every graph in the sequence from $G_1$ to $G_2$ to be bipartite. Proposition \ref{prop:doubledpath} shows that if $G_1$ and $G_2$ are doubled path equivalent, then $g_T(G_1)=g_T(G_2)$.

A graph is {\em $k$-edge connected} for some positive integer $k$ if the graph remains connected whenever fewer than $k$ edges are removed. An alternating decomposition graph $G$ is called {\em reduced} if $G$ is a single vertex or every component of $G$ is $3$-edge connected. In Section \ref{section:AltDecomp}, we study the behavior of alternating decomposition graphs under connected sum. We show that for any link $L$, there exists a diagram $D$ of $L$ with reduced alternating decomposition graph such that $D$ minimizes Turaev genus. The classification theorems characterize all reduced alternating decomposition graphs of a fixed Turaev genus.

Our main theorems give classifications of all reduced alternating decomposition graphs of Turaev genus one and two. A {\em doubled cycle $C_i^2$} of length $i$ is the graph obtained from the cycle $C_i$ of length $i$ by doubling every edge.
\begin{theorem}
\label{thm:genus1}
A reduced alternating decomposition graph $G$ is of Turaev genus one if and only if $G$ is doubled path equivalent to $C_2^2$, that is if and only if $G$ is a doubled cycle of even length.
\end{theorem}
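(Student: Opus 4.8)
The plan is to prove both directions using the recursive algorithm for $g_T(G)$ (Corollary \ref{cor:TuraevAlgorithm}) together with the invariance under doubled path equivalence (Proposition \ref{prop:doubledpath}). For the ``if'' direction, first observe that since $g_T(C_2^2)$ can be computed directly --- $C_2^2$ is the alternating decomposition graph of a standard diagram of the figure-eight knot (or more simply a small non-alternating diagram), which has Turaev genus one --- Proposition \ref{prop:doubledpath} immediately gives $g_T(G) = g_T(C_2^2) = 1$ for any $G$ doubled path equivalent to $C_2^2$. The second assertion, that a graph is doubled path equivalent to $C_2^2$ precisely when it is a doubled cycle of even length, is a combinatorial observation: contracting a doubled edge in $C_{2k}^2$ through a degree-$4$ vertex yields $C_{2k-1}^2$ (so all $C_i^2$ with $i \geq 2$ are doubled path equivalent to one another, and bipartiteness singles out the even ones as honest alternating decomposition graphs), and conversely any doubled path equivalence applied to a doubled cycle stays within the family of doubled cycles since the only reduced move available on such a graph is to lengthen or shorten the (unique) doubled path that comprises the whole graph.

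\textbf{The harder direction} is ``only if'': I must show that a reduced alternating decomposition graph $G$ with $g_T(G) = 1$ must be a doubled cycle of even length. Here I would argue by analyzing the recursive algorithm. The base case $g_T(G) = 0$ corresponds to $G$ being a single vertex (an alternating diagram). If $g_T(G) = 1$, then $G$ is not a single vertex, so every component of $G$ is $3$-edge connected; moreover $G$ must be connected, since the Turaev genus is additive over components and each nontrivial component contributes at least $1$. Now I would run one step of the recursive algorithm of Corollary \ref{cor:TuraevAlgorithm}: it reduces $g_T(G)$ to the Turaev genus of a smaller graph (or graphs) obtained by a surgery/contraction along some structure in $\mathbb{G}$, with the genus dropping by a controlled amount. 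Since the genus only needs to drop to $0$, the graph $G$ must be ``as simple as possible'' subject to being reduced: I expect the algorithm to force $G$ to have a very restricted structure, essentially that every vertex has degree exactly $4$ and the $3$-edge-connectedness combined with planarity and bipartiteness pins the graph down to a doubled cycle. The key inequality will be a lower bound of the form $g_T(G) \geq$ (some quantity measuring how far $G$ is from a doubled cycle), with equality analysis forcing $G = C_{2k}^2$.

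\textbf{Main obstacle.} The crux is making precise the claim that the recursive algorithm, when it outputs genus one, can only have done so for a doubled cycle. Concretely I anticipate needing: (i) a sharp lower bound on $g_T(G)$ in terms of the edge-vertex structure of $\mathbb{G}$ --- something like $g_T(G) \geq \frac{1}{2}\bigl(e(G) - v(G) - f(\mathbb{G}) + 2\bigr)$ or a genus formula derived from the ribbon-graph interpretation --- so that $g_T(G) = 1$ forces $e(G) - v(G)$ to be small; (ii) a classification of the reduced (i.e. $3$-edge-connected), planar, bipartite, even-degree graphs meeting that numerical constraint; and (iii) checking that among those, only the doubled even cycles actually have Turaev genus one and the rest have genus zero or are excluded. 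Step (ii) is where the $3$-edge-connectivity hypothesis does real work: it rules out graphs with a doubled path ``hanging off'' of or ``inserted into'' a more complicated piece, collapsing the possibilities down to a single cycle with every edge doubled. I would carry this out by first establishing the numerical bound from the algorithm, then doing a short case analysis on the number of vertices, and finally invoking Proposition \ref{prop:doubledpath} to reduce any surviving candidate to $C_2^2$ and read off the Turaev genus from there.
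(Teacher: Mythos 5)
Your ``if'' direction is fine and is essentially the paper's: $g_T(C_2^2)=1$ by Corollary \ref{cor:TuraevAlgorithm} (or by exhibiting a diagram such as the one in Figure \ref{figure:9_42}), and Proposition \ref{prop:doubledpath} propagates this through the doubled path equivalence class, which consists exactly of the doubled cycles. The problem is the ``only if'' direction, which is the actual content of the theorem: what you have written there is a plan, not a proof, and the key ingredient is missing. Your steps (i)--(iii) are never carried out, and the candidate inequality you float in (i) is vacuous: for a connected plane graph Euler's formula gives $e(G)-v(G)-f(\mathbb{G})+2=0$, so it cannot force $e(G)-v(G)$ to be small. More importantly, no purely numerical bound of this kind appears to suffice; what is needed is a structural statement about Turaev genus zero graphs.

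The paper's argument makes this precise as follows, and this is the piece you would have to supply. Since $G$ is reduced it has no degree-two vertices (every vertex has even degree at least four), so Lemma \ref{lemma:degree2} gives a pair of parallel edges $\{e_1,e_2\}$; $3$-edge connectivity guarantees $k(G-\{e_1,e_2\})=k(G)$, so Corollary \ref{cor:TuraevAlgorithm} yields $g_T(G-\{e_1,e_2\})=0$, and $G-\{e_1,e_2\}$ has at most two vertices of degree two. The crux is then the classification of Turaev genus zero alternating decomposition graphs with few degree-two vertices (Lemma \ref{lemma:zero}, resting on Theorem \ref{theorem:Zero} and ultimately on Turaev's result that genus zero diagrams are connected sums of alternating diagrams), which forces $G-\{e_1,e_2\}$ to be a doubled path; re-attaching the parallel pair at its two endpoints (the only degree-two vertices available, since $G$ has none) gives a doubled cycle, even by bipartiteness. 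Note also that your plan to let $3$-edge connectivity ``pin down'' the graph cannot be applied after the deletion step, since $G-\{e_1,e_2\}$ is no longer reduced; the connectivity hypothesis is used only to control the first deletion, and all the remaining work happens in the genus zero classification, which your proposal does not contain.
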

The previous theorem implies that every Turaev genus one link has a diagram $D$ obtained by connecting an even number of alternating two-tangles into a cycle, as in Figure \ref{fig:genus1}.  Dasbach and Lowrance \cite{DasLow:Signature} use Theorem \ref{thm:genus1} to compute the signature of all Turaev genus one knots and to show that either the leading or trailing coefficient of the Jones polynomial of a Turaev genus one link has absolute value one.
\begin{figure}[h]
$$\begin{tikzpicture}[thick]
\draw [bend left] (0,.5) edge (3,.5);
\draw [bend right] (0,-.5) edge (3, -.5);
\draw [bend left] (3,.5) edge (6,.5);
\draw [bend right] (3,-.5) edge (6, -.5);
\draw [bend left] (6,.5) edge (9,.5);
\draw [bend right] (6,-.5) edge (9, -.5);
\draw [bend left] (7.5,.5) edge (10.5,.5);
\draw [bend right] (7.5,-.5) edge (10.5,-.5);

\fill[white] (7.5,1) rectangle (9.1,-1);
\draw (8.25,0) node{\Large{$\dots$}};

\draw (-.8,.6) arc (270:90:.8cm);
\draw (-.8,-.6) arc (90:270:.8cm);
\draw (11.3,.6) arc (-90:90:.8cm);
\draw (11.3,-.6) arc (90:-90:.8cm);
\draw (-.8,2.2) -- (11.3,2.2);
\draw (-.8,-2.2) -- (11.3,-2.2);

\fill[white] (0,0) circle (1cm);
\draw (0,0) node {$T_1$};
\draw (0,0) circle (1cm);
\fill[white] (3,0) circle (1cm);
\draw (3,0) node {$T_2$};
\draw (3,0) circle (1cm);
\fill[white] (6,0) circle (1cm);
\draw (6,0) node {$T_3$};
\draw (6,0) circle (1cm);
\fill[white] (10.5,0) circle (1cm);
\draw (10.5,0) node {$T_{2k}$};
\draw (10.5,0) circle (1cm);

\draw (1.5,1.2) node{$+$};
\draw (1.5,-1.2) node{$-$};
\draw (4.5,1.2) node{$-$};
\draw (4.5,-1.2) node{$+$};
\draw (5.25,2) node{$-$};
\draw (5.25,-2)node{$+$};
\draw (7.2,1.2) node{$+$};
\draw (7.2,-1.2) node{$-$};
\draw (9.3,1.2) node{$+$};
\draw (9.3,-1.2)node{$-$};

\end{tikzpicture}$$
\caption{Every diagram $D$ where $g_T(D)=1$ and $G$ is reduced has alternating decomposition as above. Each two-tangle $T_i$ is alternating. A $\pm$ sign on an edge indicates that it is a non-alternating edge of $D$ with endpoints both over/under crossings respectively. The alternating decomposition graph $G$ associated to such a diagram is a doubled cycle of length $2k$.}
\label{fig:genus1}
\end{figure}
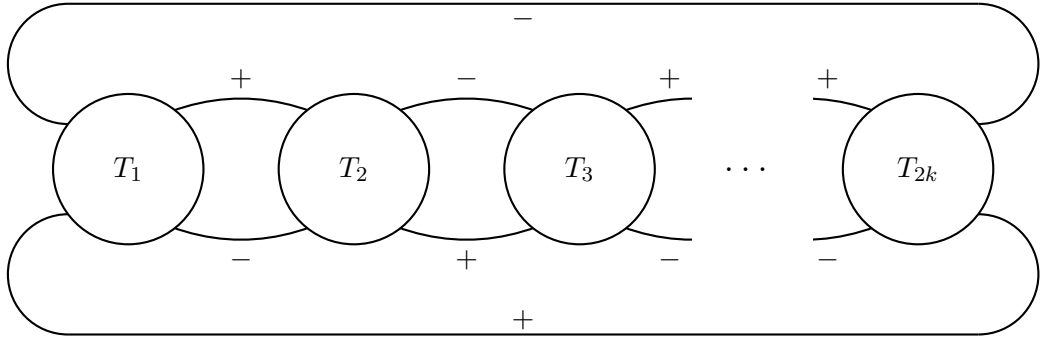

A link is {\em almost-alternating} if it is non-alternating and has a diagram $D$ that can be transformed into an alternating diagram with a single crossing change (see \cite{Adams:AlmostAlternating}). Abe and Kishimoto's work \cite{AbeKish:Dealternating} implies that all almost-alternating links have Turaev genus one. It is unknown whether there is a link with Turaev genus one that is not almost-alternating (see \cite{Low:AltDist}). The following corollary shows another relationship between almost-alternating links and Turaev genus one links.
\begin{corollary}
\label{corollary:mutant}
If $L$ is a link of Turaev genus one, then there is an almost-alternating link $L'$ such that $L$ and $L'$ are mutants of one another.
\end{corollary}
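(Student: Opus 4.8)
\emph{Proof proposal.}
The plan is to exploit the structural description of Turaev genus one links given by Theorem~\ref{thm:genus1}. By the discussion preceding that theorem (every link has a diagram with reduced alternating decomposition graph that minimizes Turaev genus) together with Theorem~\ref{thm:genus1}, the link $L$ admits a diagram $D$ of the form in Figure~\ref{fig:genus1}: a cyclic chain (a ``necklace'') of $2k$ alternating $2$-tangles $T_1,\dots,T_{2k}$ glued along $2k$ non-alternating junctions whose $\pm$ labels alternate around the chain, so that the alternating decomposition graph of $D$ is $C_{2k}^2$. I would then modify $D$ by mutations until it becomes a diagram that is a single crossing change away from an alternating diagram.

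The heart of the reduction is a mutation that shortens the necklace. Cut $D$ along a sphere meeting the diagram in the two strands of the junction between $T_1$ and $T_2$ and in the two strands of the junction between $T_3$ and $T_4$; let $A$ be the resulting $2$-tangle, which contains $T_2$, $T_3$, and the junction between them. Flipping $A$ by a suitable $\pi$-rotation is a mutation, and it reverses over/under at every corner crossing of $T_2$ and of $T_3$. A direct check with the $\pm$ labels (using that the labels alternate around the necklace, so $T_2$ and $T_3$ present corner crossings of opposite type toward the boundary junctions) then shows that the two junctions bounding $A$ become alternating, while the junction interior to $A$ stays non-alternating and all remaining junctions are unaffected. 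Consequently two alternating regions merge with their neighbors, the new diagram is again a necklace of alternating $2$-tangles --- now $2k-2$ of them, with alternating decomposition graph $C_{2k-2}^2$ --- and it represents a mutant of $L$. Iterating $k-1$ times, $L$ is related by a sequence of mutations to a link $L_0$ having a necklace diagram $D_0$ of exactly two alternating $2$-tangles $S_1,S_2$, with alternating decomposition graph $C_2^2$.

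To finish, I would show that $L_0$ (or a further mutant of it) is almost-alternating. The model case is the one in which one of the two tangles, say $S_1$, is a single crossing $c$: then $D_0$ is an alternating diagram joined to $c$ along two non-alternating junctions, changing $c$ reverses over/under at all four corners of $S_1$, and hence every junction becomes alternating, so the single crossing change converts $D_0$ into an alternating diagram --- that is, $D_0$ is almost-alternating. So the task is to arrange, either by steering the reduction steps appropriately or by one further mutation of $D_0$ that isolates a single crossing of $S_1$ and merges the rest of its crossings into $S_2$, that the final two-tangle necklace has a single-crossing tangle. Granting this, set $L':=L_0$; it is a mutant of $L$ with an almost-alternating diagram, and since $g_T(D_0)=g_T(C_2^2)=1$ by Theorems~\ref{thm:graphTuraev} and~\ref{thm:genus1}, one checks that $g_T(L')=1$, so $L'$ is non-alternating and therefore almost-alternating.

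The step I expect to be the main obstacle is precisely this last one: a generic necklace of two alternating tangles need not itself be almost-alternating, so the reduction must be guided toward one in which the non-alternating behavior is concentrated at a single crossing; making that precise --- identifying which tangles can occur in a minimal-genus Turaev genus one necklace, and exhibiting the mutation that trades all but one crossing of $S_1$ into $S_2$ --- is where the real work lies. A secondary point to nail down is that the resulting $L'$ is genuinely non-alternating (rather than an alternating link carrying an almost-alternating diagram), so that the definition of ``almost-alternating'' applies.
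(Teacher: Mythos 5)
Your first half matches the paper: starting from a necklace diagram realizing $C_{2k}^2$ (Figure \ref{fig:genus1}), rotating a $2$-tangle containing two consecutive tangles $T_i,T_{i+1}$ is a mutation that shortens the necklace to $C_{2k-2}^2$, and iterating reduces to a diagram $D_0$ with alternating decomposition graph $C_2^2$. The gap is exactly where you flagged it, and your proposed repair is not the right one. You try to arrange, by further mutation, that one of the two alternating tangles in $D_0$ is a single crossing; but a mutation only rotates a $2$-tangle inside its Conway sphere and cannot transfer crossings from $S_1$ into $S_2$, so for a generic Turaev genus one link there is no reason such a single-crossing necklace mutant exists, and no argument is offered. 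As written, the proof is incomplete at the decisive step.

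The paper closes this gap without any constraint on the tangles: it shows that \emph{every} diagram $D'$ with alternating decomposition graph $C_2^2$ already represents an almost-alternating link, by an isotopy rather than a mutation. One of the four non-alternating edges joining $T_1$ to $T_2$ is pulled over the whole tangle $T_1$ (Figure \ref{figure:almostalternating}); in the resulting diagram there is a single crossing whose change produces an alternating diagram. (This manipulation is the same one used in Jong's proof that non-alternating Montesinos links are almost-alternating.) So no steering of the reduction, and no single-crossing tangle, is needed. A smaller issue: your argument that $L'$ is non-alternating from $g_T(D_0)=1$ does not follow, since $g_T$ of a link is a minimum over all diagrams and $g_T(D_0)=1$ only gives an upper bound; this point is also left implicit in the paper, but you should not present it as something one "checks" from the diagram alone.
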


We present a similar classification theorem for reduced alternating decomposition graphs of Turaev genus two. However, instead of only one doubled path equivalence class, now there are five. Let $G_1$ and $G_2$ be two graphs. A {\em one-sum} $G_1\oplus_1 G_2$ is the graph obtained by identifying a vertex of $G_1$ with a vertex of $G_2$. Let $e_1$ be an edge in $G_1$ connecting vertices $v_1$ and $v_2$, and let $e_2$ be an edge in $G_2$ connecting vertices $u_1$ and $u_2$. A {\em two-sum} $G_1\oplus_2 G_2$ is the graph obtained by identifying the triple $(v_1,v_2,e_1)$ with $(u_1,u_2,e_2)$, and then deleting the edge corresponding to $e_1$ and $e_2$. For example the two-sum of two three-cycles $C_3\oplus_2 C_3$ is a four cycle $C_4$. Consider the following five classes of graphs, as depicted in Figure \ref{fig:genus2}.
\begin{enumerate}
\item Let $C_i^2\sqcup C_j^2$ denote the disjoint union of the doubled cycles $C_i^2$ and $C_j^2$.
\item Let $C_i^2\oplus_{1} C_j^2$ be the graph obtained identifying a vertex of the doubled cycle $C^2_i$ with a vertex of $C^2_j$.
\item Let $C_{i,j,k}$ be the graph obtained by identifying two paths of length $k$ in the cycle $C_{i+k}$ of length $i+k$ and the cycle $C_{j+k}$ of length $j+k$. Furthermore, let $C_{i,j,k}^2$ be the graph $C_{i,j,k}$ with each edge doubled.
\item Let $K_4(p,q)$ be the graph obtained by replacing two non-adjacent edges of the complete graph $K_4$ with doubled paths of lengths $p$ and $q$ respectively.
\item Let $K_4(p)$ be the graph $K_4$ with one edge replaced by a doubled path of length $p$. Let $K_4(p) \oplus_2 K_4(q)$ be the two-sum of $K_4(p)$ and $K_4(q)$ taken along the unique edge in each summand that is not contained in or adjacent to the doubled path.
\end{enumerate}
\begin{figure}[h]
$$\begin{tikzpicture}
\draw (.5,0) circle (.6cm);
\draw (.5,0) circle (.4cm);

\fill[black!20!white] (0,0) circle (.2cm);
\draw (0,0) circle (.2cm);
\fill[black!20!white] (1,0) circle (.2cm);
\draw (1,0) circle (.2cm);

\begin{scope}[xshift = 1.8cm]
	\draw (.5,0) circle (.6cm);
	\draw (.5,0) circle (.4cm);

	\fill[black!20!white] (0,0) circle (.2cm);
	\draw (0,0) circle (.2cm);
	\fill[black!20!white] (1,0) circle (.2cm);
	\draw (1,0) circle (.2cm);
\end{scope}

\draw (1.4,-1) node {$C^2_2\sqcup C^2_2$};

\begin{scope}[xshift = 4.5cm]
\draw (.45,0) circle (.6cm);
\draw (.45,0) circle (.4cm);
\draw (1.55,0) circle (.6cm);
\draw (1.55,0) circle (.4cm);

\fill[black!20!white] (-.05,0) circle (.2cm);
\draw (-.05,0) circle (.2cm);
\fill[black!20!white] (1,0) circle (.2cm);
\draw (1,0) circle (.2cm);
\fill[black!20!white] (2.05,0) circle (.2cm);
\draw (2.05,0) circle (.2cm);

\draw (1,-1) node{$C^2_2\oplus_{1} C^2_2$};

\end{scope}

\begin{scope}[xshift = 9cm]
	\draw (.5,0) circle (.6cm);
	\draw (.5,0) circle (.4cm);
	\draw (0,.1) -- (1,.1);
	\draw (0,-.1) -- (1, -.1);

	\fill[black!20!white] (0,0) circle (.2cm);
	\draw (0,0) circle (.2cm);
	\fill[black!20!white] (1,0) circle (.2cm);
	\draw (1,0) circle (.2cm);
	
	\draw (.5,-1) node{$C_{1,1,1}^2$};
\end{scope}

\begin{scope}[yshift = -5cm]
\draw (0,0) -- (2,0) -- (2,2) -- (0,2) -- (0,0);
\draw (0,.1) -- (2,2.1);
\draw (0,-.1) -- (2,1.9);
\draw[bend left] (-.1,2.1) edge (3,3.1);
\draw[bend left] (0,1.9) edge (3,2.9);
\draw [bend left] (2.9, 3) edge (2,0.1);
\draw [bend left] (3.1,3) edge (2.1,-.1);

\fill[black!20!white] (3,3) circle (.2cm);
\draw (3,3) circle (.2cm);

\fill[black!20!white](0,0) circle (.2cm);
\fill[black!20!white](2,0) circle (.2cm);
\fill[black!20!white](2,2) circle (.2cm);
\fill[black!20!white](0,2) circle (.2cm);
\draw (0,0) circle (.2cm);
\draw (2,0) circle (.2cm);
\draw (0,2) circle (.2cm);
\draw (2,2) circle (.2cm);

\fill[black!20!white] (1,1) circle (.2cm);
\draw (1,1) circle (.2cm);

\draw (1,-1) node{$K_4(2,2)$};

\end{scope}

\begin{scope}[xshift = 8cm, yshift=-5.3cm,scale = .8]
	
	\draw (0,0) -- (-3,2) -- (0,4) -- (-1,2) -- (0,0) -- (1,2) -- (0,4) -- (3,2) -- (0,0);
	\draw (-3,2.1) -- (-1,2.1);
	\draw (-3,1.9) -- (-1,1.9);
	\draw (3,2.1) -- (1,2.1);
	\draw (3,1.9) -- (1,1.9);

	\fill[black!20!white] (0,0) circle (.2cm);
	\fill[black!20!white] (0,4) circle (.2cm);
	\draw (0,0) circle (.2cm);
	\draw (0,4) circle (.2cm);

	\fill[black!20!white] (-3,2) circle (.2cm);
	\fill[black!20!white] (-2,2) circle (.2cm);
	\fill[black!20!white] (-1,2) circle (.2cm);
	\fill[black!20!white] (1,2) circle (.2cm);
	\fill[black!20!white] (2,2) circle (.2cm);
	\fill[black!20!white] (3,2) circle (.2cm);
	\draw (-3, 2) circle (.2cm);
	\draw (-2, 2) circle (.2cm);
	\draw (-1, 2) circle (.2cm);
	\draw (1, 2) circle (.2cm);
	\draw (2, 2) circle (.2cm);
	\draw (3, 2) circle (.2cm);

\end{scope}
\draw(8,-6) node{$K_4(2)\oplus_2 K_4(2)$};

\end{tikzpicture}$$
\caption{Representatives of the five doubled path equivalence classes of reduced alternating decompositions graphs of Turaev genus two. Informally, a Turaev genus two link diagram is obtained by inserting appropriate alternating tangles inside of the vertices of these graphs. In the case of $C_2^2\sqcup C_2^2$ one should insert an annular alternating region bounded by two curves that correspond to vertices in distinct components. See Figure \ref{figure:annulus} for an example of a connected link diagram with disconnected alternating decomposition graph.}
\label{fig:genus2}
\end{figure}
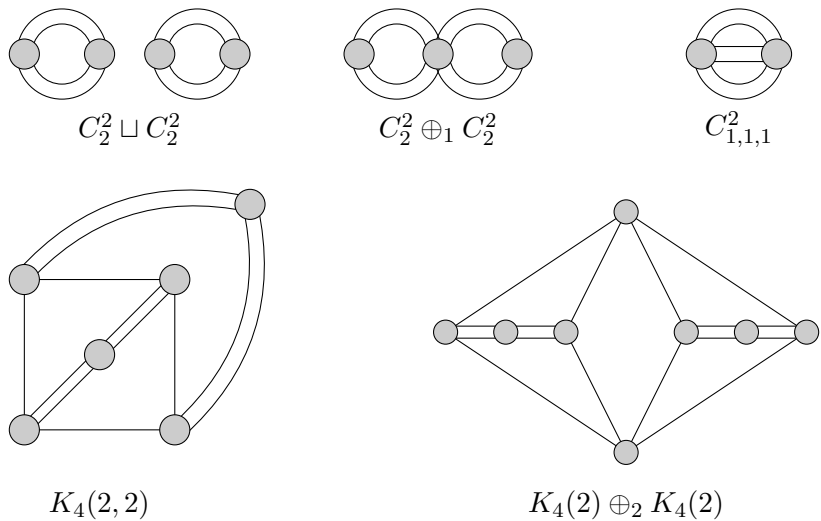
The graphs in the above families are not necessarily bipartite (depending on their parameters). Informally, the subsequent theorem states that a reduced alternating decomposition graph has Turaev genus two if and only if it is in one of the above five families and it is bipartite. The precise statement uses doubled path equivalence.
\begin{theorem}
\label{thm:genus2}
A reduced alternating decomposition graph $G$ is of Turaev genus two if and only if $G$ is doubled path equivalent to one of the following five graphs:
\begin{enumerate}
\item $C_2^2\sqcup C_2^2$,
\item $C_2^2\oplus_1 C_2^2$,
\item $C^2_{1,1,1}$,
\item $K_4(2,2)$, or
\item $K_4(2)\oplus_2 K_4(2)$.
\end{enumerate}
\end{theorem}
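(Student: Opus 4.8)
The plan is to treat the two implications separately: the ``if'' direction is a bounded computation, while the ``only if'' direction is the substantial part. Throughout I would use that $g_T$ is an invariant of doubled path equivalence classes (Proposition~\ref{prop:doubledpath}) and of the isomorphism type of the alternating decomposition graph (Theorem~\ref{thm:graphTuraev}), that it is computable from the graph alone by the recursion of Corollary~\ref{cor:TuraevAlgorithm}, that it is additive over connected components and, by the connected sum analysis of Section~\ref{section:AltDecomp}, over one-sums and two-sums, and finally the Turaev genus one classification of Theorem~\ref{thm:genus1}.

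For the ``if'' direction it is enough, by Proposition~\ref{prop:doubledpath}, to check that each of the five named graphs has Turaev genus exactly two. Each of $C_2^2\sqcup C_2^2$, $C_2^2\oplus_1 C_2^2$, $C_{1,1,1}^2$, $K_4(2,2)$, and $K_4(2)\oplus_2 K_4(2)$ is small, so I would evaluate $g_T$ directly from Corollary~\ref{cor:TuraevAlgorithm} (or equivalently exhibit an explicit realizing diagram with a genus two Turaev surface, as in Figures~\ref{fig:genus1} and~\ref{fig:genus2}); for $C_2^2\sqcup C_2^2$ one combines additivity over components with the value $g_T(C_2^2)=1$ from Theorem~\ref{thm:genus1}.

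For the ``only if'' direction, let $G$ be a reduced alternating decomposition graph with $g_T(G)=2$. If $G$ is disconnected, additivity over components together with the fact that each component, being $3$-edge connected, has positive Turaev genus (it is realized by a connected non-alternating subdiagram) forces $G$ to have exactly two components, each of Turaev genus one; Theorem~\ref{thm:genus1} then identifies $G$ up to doubled path equivalence with $C_2^2\sqcup C_2^2$. Now suppose $G$ is connected. If $G$ has a cut vertex, the one-sum decomposition $G=G_1\oplus_1 G_2$ and additivity force $g_T(G_1)=g_T(G_2)=1$, so by Theorem~\ref{thm:genus1} each $G_i$ is doubled path equivalent to $C_2^2$ and hence $G$ is doubled path equivalent to $C_2^2\oplus_1 C_2^2$. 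If $G$ has no cut vertex but admits a nontrivial two-sum $G=G_1\oplus_2 G_2$, additivity again forces $g_T(G_1)=g_T(G_2)=1$; here I would classify the Turaev genus one graphs carrying a distinguished edge up to the doubled path moves fixing that edge, and determine which gluings yield a bipartite graph. This produces exactly the two possibilities $C_{1,1,1}^2$ and $K_4(2)\oplus_2 K_4(2)$, according to whether or not the distinguished edges lie in a bond.

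It remains to handle the ``prime'' case: $G$ connected, $2$-connected, and with no nontrivial two-sum. Passing to a doubled path minimal representative, I would extract from Corollary~\ref{cor:TuraevAlgorithm} a lower bound for $g_T(G)$ that grows with the number of vertices of a prime graph; since $g_T(G)=2$, this bounds the size of $G$ and reduces the claim to a finite inspection of small prime graphs, which yields the single remaining class $K_4(2,2)$. The main obstacles are exactly these last two cases: proving a size bound in the prime case sharp enough to terminate the enumeration, and, in the two-sum case, the bookkeeping caused by the failure of doubled path equivalence to respect two-sums — this is what separates $C_{1,1,1}^2$ from $K_4(2)\oplus_2 K_4(2)$ even though both arise as two-sums of Turaev genus one graphs. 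In both places the recursion of Corollary~\ref{cor:TuraevAlgorithm} does the essential work, which is why it sits at the core of the classification.
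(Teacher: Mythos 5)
Your ``if'' direction coincides with the paper's (Corollary \ref{cor:TuraevAlgorithm} plus Proposition \ref{prop:doubledpath}), but the ``only if'' direction as proposed has two genuine gaps. First, you lean on additivity of $g_T$ over one-sums and, crucially, over two-sums of graphs. The paper only proves additivity of Turaev genus under connected sum of \emph{diagrams}; no statement about graph two-sums is available, and, worse, the pieces of a two-sum decomposition of a bipartite graph need not be bipartite, hence need not be alternating decomposition graphs at all, so $g_T$ of the pieces is undefined in this framework and Theorem \ref{thm:genus1} cannot be applied to them. Concretely, $K_4(2)$ contains triangles, so $K_4(2)\oplus_2 K_4(2)$ is \emph{not} a two-sum of Turaev genus one alternating decomposition graphs; and $K_4(2,2)$ has a two-vertex cut (the two ends of either doubled path), so it is not ``prime'' in your sense, its two-sum pieces again being non-bipartite. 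Thus your assignment of output classes to cases (two-sum case $\to$ $C^2_{1,1,1}$ and $K_4(2)\oplus_2K_4(2)$; prime case $\to$ $K_4(2,2)$) is internally inconsistent as stated, and the step ``classify genus one graphs with a distinguished edge and determine which gluings are bipartite'' cannot be carried out by citing Theorem \ref{thm:genus1}. (The cut-vertex case fares better: a handshake argument shows the two one-sum pieces are honest alternating decomposition graphs, and one-sum additivity is provable from the twisted ribbon graph formula, though you would still need to prove it rather than cite the connected sum analysis.) Second, your remaining ``prime'' case is exactly the hard content of the theorem, and it is left as a hope: no lower bound growing with the size of a prime graph is derived from Corollary \ref{cor:TuraevAlgorithm}, and no finite inspection is performed.

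The paper's proof follows a different and complete route that avoids both problems. Since $G$ is reduced, Lemma \ref{lemma:degree2} supplies a parallel pair whose deletion cannot disconnect a $3$-edge connected component, so deleting it drops $g_T$ to one; a short contraction argument then produces a second parallel pair in the resulting graph whose deletion again does not increase the number of components. Removing both pairs leaves a Turaev genus zero alternating decomposition graph with at most four vertices of degree two, which Lemma \ref{lemma:zero} (resting on the structure theorem, Theorem \ref{theorem:Zero}) classifies completely; one then enumerates the ways of reattaching two parallel pairs so that the result is reduced and bipartite, obtaining exactly the five doubled path equivalence classes. To rescue your outline you would need (i) a genus notion and additivity statement valid for the possibly non-bipartite, odd-degree summands (say via Euler characteristics of twisted, possibly non-orientable embeddings as in the proof of Proposition \ref{prop:doubledpath}), and (ii) an actual size bound and enumeration in the prime case; by the time both are in place you have essentially rebuilt the machinery of Section \ref{section:zero}.
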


Seungwon Kim  \cite{Kim:TuraevClassification} has independently proved versions of Theorems \ref{thm:genus1} and \ref{thm:genus2}. The following theorem shows that for each non-negative integer $k$, there exists a similar classification of reduced alternating decomposition graphs of Turaev genus $k$.
\begin{theorem}
\label{thm:arbgenus}
Let $k$ be a non-negative integer. There are a finite number of doubled path equivalence classes of reduced alternating decomposition graphs $G$ with Turaev genus $k$.
\end{theorem}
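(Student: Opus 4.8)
\noindent\emph{Proof proposal.} The plan is to show that, up to doubled path equivalence, a reduced alternating decomposition graph of Turaev genus $k$ has a number of edges bounded in terms of $k$; since there are only finitely many (multi)graphs of bounded size, finiteness of the number of equivalence classes follows. Call a graph \emph{bead-free} if it contains no doubled path of length at least two, equivalently if no vertex of degree four has its four incident edges arranged as two doubled edges to two distinct neighbors. Given a reduced alternating decomposition graph $G$ with $g_T(G)=k\ge 1$, first contract doubled paths repeatedly until the result $G_0$ is bead-free. Contracting a pair of parallel edges preserves $3$-edge-connectivity (an edge cut of the quotient lifts to an edge cut of $G$ of the same size) and the evenness of every degree, so each component of $G_0$ is a connected, $3$-edge-connected, even-degree, bead-free graph on at least two vertices, hence of minimum degree at least four; moreover $g_T(G_0)=k$ by Proposition \ref{prop:doubledpath}. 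A Turaev genus zero diagram is alternating and so has a single-vertex alternating decomposition graph, hence the recursion of Corollary \ref{cor:TuraevAlgorithm} must strictly decrease Turaev genus when applied to a connected $3$-edge-connected graph on at least two vertices; therefore each component of $G_0$ has Turaev genus at least one, and since Turaev genus is additive over components, $G_0$ has at most $k$ components. Finally, because doubled path equivalence classes partition the reduced alternating decomposition graphs, the assignment of (a fixed choice of) $G_0$ to the class of $G$ is injective on classes. It thus suffices to bound the number of edges of a connected, $3$-edge-connected, even-degree, bead-free graph in terms of its Turaev genus.

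Let $\mathcal C$ denote the class of connected, $3$-edge-connected, even-degree, bead-free graphs. I claim $e(H)\le C\,g_T(H)$ for all $H\in\mathcal C$ and a constant $C$, proved by strong induction on $e(H)$. The essential input is the shape of the algorithm of Corollary \ref{cor:TuraevAlgorithm}: applied to an $H\in\mathcal C$ that is not one of the finitely many small base cases, it performs a \emph{local} reduction, deleting or re-splicing a piece of $H$ contained in a neighborhood of bounded size of a single region, and produces a graph $H'$ with $e(H')\ge e(H)-c$ and with total Turaev genus strictly below that of $H$, for a constant $c$. The graph $H'$ need not lie in $\mathcal C$, but it fails to only near the modified region: it acquires at most $c$ vertices of degree two, at most $c$ nontrivial $2$-edge-cuts, and at most $c$ doubled-path vertices. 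One restores $\mathcal C$ by bounded clean-up -- suppress the degree-two vertices, split along the $2$-edge-cuts via the genus-additive splittings provided by the algorithm, and contract the remaining doubled paths; the last operation does not run away because contracting a bead creates a new bead only at a vertex that was already a bead, so the bead set can only shrink and all clean-up stays near the modified region. The result is graphs $H_1,\dots,H_m\in\mathcal C$ (plus discarded Turaev genus zero pieces), with $m$ and $e(H)-\sum_i e(H_i)$ both bounded by a constant $c'$, and with $\sum_i g_T(H_i)\le g_T(H)-1$; each $H_i$ has strictly fewer edges than $H$. By the inductive hypothesis $e(H_i)\le C\,g_T(H_i)$, so $e(H)\le c'+\sum_i e(H_i)\le c'+C\bigl(g_T(H)-1\bigr)\le C\,g_T(H)$ as soon as $C\ge c'$, with the small base cases absorbed into the choice of $C$.

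Combining the two steps, a reduced alternating decomposition graph of Turaev genus $k$ is doubled path equivalent to a graph $G_0$ whose components $H_1,\dots,H_m$ satisfy $e(G_0)=\sum_i e(H_i)\le C\sum_i g_T(H_i)=C\,g_T(G_0)=Ck$. There are only finitely many multigraphs with at most $Ck$ edges all of whose components have at least two vertices, so only finitely many possibilities for $G_0$, and hence only finitely many doubled path equivalence classes of reduced alternating decomposition graphs of Turaev genus $k$ (the case $k=0$ being the single-vertex graph alone).

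The step I expect to be the main obstacle is the ``local reduction with bounded clean-up'' used above: extracting from Corollary \ref{cor:TuraevAlgorithm} that the genus-decreasing move on a $3$-edge-connected, even-degree, bead-free graph disturbs only a region of size bounded independently of the graph, and then verifying that re-entering $\mathcal C$ -- splitting along the freshly created small cuts and contracting the freshly created doubled paths -- costs only a bounded change in edge count and number of components. This is essentially the case analysis that underlies Theorems \ref{thm:genus1} and \ref{thm:genus2}, but now made uniform in the genus rather than carried out separately in each small case; the remaining ingredients -- additivity of Turaev genus over components, Proposition \ref{prop:doubledpath}, and a pigeonhole count of small graphs -- are routine.
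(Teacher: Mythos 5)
There is a genuine gap, and it sits exactly where you flagged it: the ``local reduction with bounded clean-up'' claim that drives your inductive bound $e(H)\le C\,g_T(H)$ is not proved, and as stated it is false. For $H$ in your class $\mathcal C$ the only genus-decreasing move available from Corollary \ref{cor:TuraevAlgorithm} is deletion of a pair of parallel edges $e_1,e_2$ joining $u$ and $v$ (evenness of degrees makes $H$ in fact $4$-edge-connected, so the deletion never disconnects). But every $4$-edge-cut of $H$ containing both $e_1$ and $e_2$ becomes a $2$-edge-cut of $H-\{e_1,e_2\}$, and one can build bead-free, even, $3$-edge-connected graphs in which $u$ and $v$ are joined, besides the parallel pair, by an arbitrarily long chain of blobs with exactly two edges between consecutive blobs; deleting that pair then creates linearly many $2$-edge-cuts, so ``at most $c$ nontrivial $2$-edge-cuts'' fails. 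Worse, the clean-up you invoke to repair this--``split along the $2$-edge-cuts via the genus-additive splittings provided by the algorithm''--is not an operation the algorithm provides: Corollary \ref{cor:TuraevAlgorithm} only handles parallel pairs and degree-two vertices, so you would need a separate lemma (graph-level additivity of $g_T$ under one- and two-sums, extracted from the connected-sum analysis of Section \ref{section:zero}) together with bookkeeping showing the cascade of splittings and contractions changes the edge count by a bounded amount and lands every piece back in $\mathcal C$. That bookkeeping is the entire difficulty of the theorem; deferring it, as your last paragraph does, leaves the proof incomplete. A smaller but real slip: ``a Turaev genus zero diagram is alternating'' is false (it is a connected sum of alternating diagrams, Theorem \ref{theorem:Zero}); the fact you actually need--that a connected, $3$-edge-connected, even-degree graph on at least two vertices has $g_T\ge 1$--is true, but should be argued directly (it has a parallel pair by Lemma \ref{lemma:degree2}, whose deletion preserves connectedness, so the genus drops by one and is therefore positive).

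For contrast, the paper avoids any such surgery-and-repair induction. It fixes a contraction-minimal representative $G_c$ of each class, and shows by a parity argument (even cycles, using minimality) that deleting one parallel pair at each ``bead'' vertex does not disconnect $G_c$; hence there are at most $k$ bead vertices and at most $2k$ further vertices whose multiplicities exceed two, so $\operatorname{si}(G_c)$ has at most $3k$ degree-two vertices. Combined with the unlabeled proposition $n(\operatorname{si}(G))\le 3g_T(G)$ and Lemma \ref{lemma:finite}, this bounds $\operatorname{si}(G_c)$ to finitely many possibilities, and bounded multiplicities finish the count. Your overall strategy (bound the size of a bead-free minimal representative linearly in $k$, then count) is in the same spirit, but the size bound is exactly the content that must be supplied, and your proposal does not supply it.
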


This paper is organized as follows. In Section \ref{section:Turaev}, we review background material on the Turaev surface and discuss its connections to other areas of knot theory. In Section \ref{section:AltDecomp}, we give the algorithm to compute $g_T(G)$ and prove Theorem \ref{thm:graphTuraev}. In Section \ref{section:zero}, we classify alternating decomposition graphs of Turaev genus zero and show that all links have a Turaev genus minimizing diagram whose alternating decomposition graph is reduced. In Section \ref{section:classification}, we prove the three main classification theorems (Theorems \ref{thm:genus1}, \ref{thm:genus2}, and \ref{thm:arbgenus}).

The authors thank Sergei Chmutov, Oliver Dasbach, Nathan Druivenga, Charles Frohman, and Thomas Kindred for their helpful comments.

\section{The Turaev surface}
\label{section:Turaev}
In this section, we give the construction of the Turaev surface of a link diagram $D$ and discuss its connections to other link invariants. For a more in depth summary, see Champanerkar and Kofman's recent survey \cite{CK:Survey}.

Each link diagram $D$ has an associated Turaev surface $F(D)$, constructed as follows. Figure \ref{figure:smoothing} shows the $A$ and $B$ resolutions of a crossing in $D$.
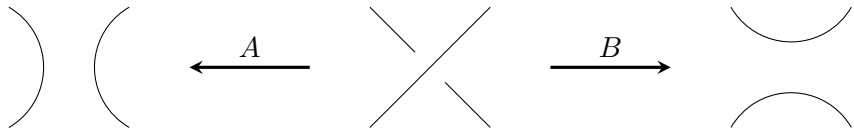
\begin{figure}[h]
$$\begin{tikzpicture}[>=stealth, scale=.8]
\draw (-1,-1) -- (1,1);
\draw (-1,1) -- (-.25,.25);
\draw (.25,-.25) -- (1,-1);
\draw (-3,0) node[above]{$A$};
\draw[->,very thick] (-2,0) -- (-4,0);
\draw (3,0) node[above]{$B$};
\draw[->,very thick] (2,0) -- (4,0);
\draw (-5,1) arc (120:240:1.1547cm);
\draw (-7,-1) arc (-60:60:1.1547cm);
\draw (5,1) arc (210:330:1.1547cm);
\draw (7,-1) arc (30:150:1.1547cm);
\end{tikzpicture}$$
\caption{The $A$ and $B$ resolutions of a crossing}
\label{figure:smoothing}
\end{figure}
The collection of simple closed curves obtained by performing either an $A$-resolution or a $B$-resolution for each crossing of $D$ is a {\em state} of $D$. Performing an $A$-resolution for every crossing results in the {\em all-$A$ state} of $D$. Similarly, performing a $B$-resolution for every crossings results in the {\em all-$B$ state} of $D$. Let $s_A(D)$ and $s_B(D)$ denote the number of components in the all-$A$ and all-$B$ states of $D$ respectively.

To construct the Turaev surface, we take a cobordism from the all-$B$ state of $D$ to the all-$A$ state of $D$ such that the cobordism consists of bands away from the crossings of $D$ and saddles in neighborhoods of the crossing, as depicted in Figure \ref{figure:saddle}. Finally, to obtain $F(D)$, we cap off the boundary components of the cobordism with disks.
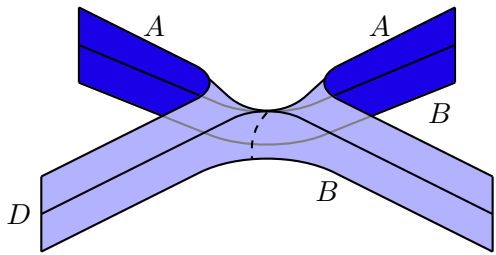
\begin{figure}[h]
$$\begin{tikzpicture}
\begin{scope}[thick]
\draw [rounded corners = 10mm] (0,0) -- (3,1.5) -- (6,0);
\draw (0,0) -- (0,1);
\draw (6,0) -- (6,1);
\draw [rounded corners = 5mm] (0,1) -- (2.5, 2.25) -- (0.5, 3.25);
\draw [rounded corners = 5mm] (6,1) -- (3.5, 2.25) -- (5.5,3.25);
\draw [rounded corners = 5mm] (0,.5) -- (3,2) -- (6,.5);
\draw [rounded corners = 7mm] (2.23, 2.3) -- (3,1.6) -- (3.77,2.3);
\draw (0.5,3.25) -- (0.5, 2.25);
\draw (5.5,3.25) -- (5.5, 2.25);
\end{scope}

\begin{pgfonlayer}{background2}
\fill [blue!30!white]  [rounded corners = 10 mm] (0,0) -- (3,1.5) -- (6,0) -- (6,1) -- (3,2) -- (0,1); 
\fill [blue!30!white] (6,0) -- (6,1) -- (3.9,2.05) -- (4,1);
\fill [blue!30!white] (0,0) -- (0,1) -- (2.1,2.05) -- (2,1);
\fill [blue!30!white] (2.23,2.28) --(3.77,2.28) -- (3.77,1.5) -- (2.23,1.5);

\fill [white, rounded corners = 7mm] (2.23,2.3) -- (3,1.6) -- (3.77,2.3);
\fill [blue!30!white] (2,2) -- (2.3,2.21) -- (2.2, 1.5) -- (2,1.5);
\fill [blue!30!white] (4,2) -- (3.7, 2.21) -- (3.8,1.5) -- (4,1.5);
\end{pgfonlayer}

\begin{pgfonlayer}{background4}
\fill [blue!90!red] (.5,3.25) -- (.5,2.25) -- (3,1.25) -- (2.4,2.2);
\fill [rounded corners = 5mm, blue!90!red] (0.5,3.25) -- (2.5,2.25) -- (2,2);
\fill [blue!90!red] (5.5,3.25) -- (5.5,2.25) -- (3,1.25) -- (3.6,2.2);
\fill [rounded corners = 5mm, blue!90!red] (5.5, 3.25) -- (3.5,2.25) -- (4,2);
\end{pgfonlayer}

\draw [thick] (0.5,2.25) -- (1.6,1.81);
\draw [thick] (5.5,2.25) -- (4.4,1.81);
\draw [thick] (0.5,2.75) -- (2.1,2.08);
\draw [thick] (5.5,2.75) -- (3.9,2.08);

\begin{pgfonlayer}{background}
\draw [black!50!white, rounded corners = 8mm, thick] (0.5, 2.25) -- (3,1.25) -- (5.5,2.25);
\draw [black!50!white, rounded corners = 7mm, thick] (2.13,2.07) -- (3,1.7)  -- (3.87,2.07);
\end{pgfonlayer}
\draw [thick, dashed, rounded corners = 2mm] (3,1.85) -- (2.8,1.6) -- (2.8,1.24);
\draw (0,0.5) node[left]{$D$};
\draw (1.5,3) node{$A$};
\draw (4.5,3) node{$A$};
\draw (3.8,.8) node{$B$};
\draw (5.3, 1.85) node{$B$};
\end{tikzpicture}$$
\caption{In a neighborhood of each crossing of $D$ a saddle surface transitions between the all-$A$ and all-$B$ states.}
\label{figure:saddle}
\end{figure}
The Turaev surface $F(D)$ is oriented, and we denote the genus of the Turaev surface of $D$ by $g_T(D)$. If the Turaev surface (or any oriented surface) is disconnected, then when we refer to its genus, we mean the sum of the genera of its connected components. Let $k(D)$ be the number of split components of the diagram $D$, i.e. the number of graph components of $D$ when $D$ is considered as a $4$-valent graph whose vertices are the crossings. Also, let $c(D)$ be the number of crossings of $D$. It can be shown that 
\begin{equation}
\label{equation:Turaevgenus}
g_T(D) = \frac{1}{2}(2k(D) + c(D) -s_A(D) - s_B(D)).
\end{equation}
The {\em Turaev genus} $g_T(L)$ of a link $L$ is the minimum genus of the Turaev surface of $D$ where $D$ is any diagram of $L$, i.e.
$$g_T(L) = \min\{g_T(D)~|~D~\text{is a diagram of}~L\}.$$

Turaev \cite{Turaev:SimpleProof} constructs his surface in a slightly different, but equivalent way. Turaev's construction allows us to see that a diagram $D$ of the link $L$ can be considered as a $4$-valent graph simultaneously embedded on the sphere and the Turaev surface $F(D)$. First consider $D$ as embedded on a sphere $S$. Then $L$ can be embedded into $S^3$ by replacing crossings of $D$ with suitably small balls where one strand passes over the other as in Figure \ref{figure:CrossingBall}. 

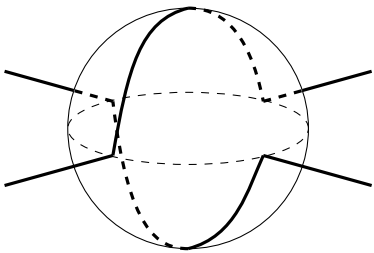
\begin{figure}[h]
$$\begin{tikzpicture}[scale=.8]
\draw (0,0) circle (2cm);
\draw[dashed] (0,0) ellipse (2cm and .6cm);
\draw[very thick] (-1.25,-.45) -- (-3.05,-.95);
\draw[very thick] (0,2) .. controls (-.8,1.8) and (-1,1) .. (-1.25,-.45);
\draw[dashed, very thick] (1.25,.45) -- (1.9,.63);
\draw[very thick] (1.9,.63) -- (3.05,.95);
\draw[dashed, very thick] (0,2) .. controls (.5,2) and (1,1.7) .. (1.25, .45);

\draw[dashed, very thick] (-1.25, .45) -- (-1.9, .63);
\draw[very thick] (1.25,-.45) -- (3.05, -.95);
\draw[very thick] (0,-2) .. controls (.8,-1.8) and (1,-1) .. (1.25,-.45);

\draw[dashed, very thick] (0,-2) .. controls (-.5,-2) and (-1,-1.7) .. (-1.25,.45);
\draw[very thick] (-3.05,.95) -- ( -1.9, .63);
\end{tikzpicture}$$
\caption{A crossing ball shows how $L$ is embedded near a crossing of $D$.}
\label{figure:CrossingBall}
\end{figure}

We construct the Turaev surface of $D$ by first replacing each crossing of $D$ with the disk that is the intersection of the associated crossing ball and $S$. Each alternating edge of $D$ is replaced with an untwisted band that lies completely in the projection sphere $S$. Each non-alternating edge of $D$ is replaced with a twisted band. One arc on the boundary of the twisted band will be an arc in a component of the all-$A$ state of $D$, and one arc on the boundary of the twisted band will be an arc in a component of the all-$B$ state of $D$. The band can be twisted so that the arc corresponding to the all-$A$ state lies in the union of $S$ and its exterior, while the arc corresponding to the all-$B$ state lies in the union of $S$ and its interior. After replacing each crossing of $D$ with a band, the boundary of the resulting surface is the union of the all-$A$ state of $D$ and the all-$B$ state of $D$. Moreover, the boundary components corresponding to the all-$A$ state lie in the union of $S$ and its exterior, and the boundary components corresponding to the all-$B$ state lie in the union of $S$ and its interior. Therefore, the boundary components of this surface can be capped off with disks embedded in $S^3$, and the resulting surface is the Turaev surface $F(D)$. By projecting the link to $S$ in the crossing balls, one can consider the diagram $D$ to be embedded on both $S$ and the Turaev surface $F(D)$. See Figure \ref{figure:TuraevAlternate}.

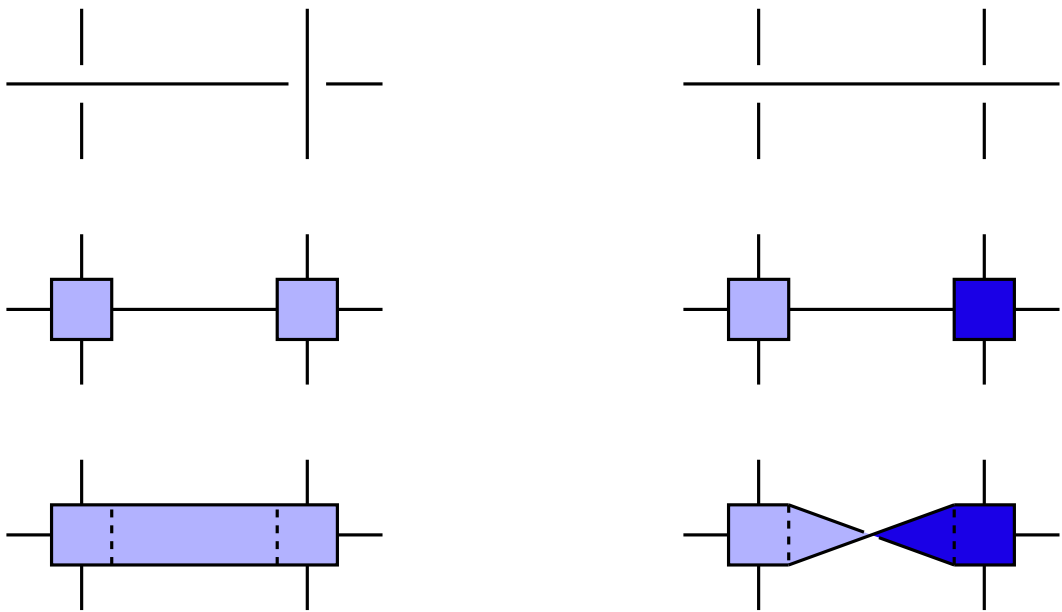
\begin{figure}[h]
$$\begin{tikzpicture}

\draw [very thick] (0,1) -- (3.75,1);
\draw [very thick] (1,2) -- (1,1.25);
\draw [very thick] (1,0) -- (1, .75);
\draw [very thick] (4,2) -- (4,0);
\draw [very thick] (4.25,1) -- (5,1);

\begin{scope}[yshift = -3cm];
	\begin{pgfonlayer}{background}
		\draw [very thick] (0,1) -- (3.75,1);
		\draw [very thick] (1,2) -- (1,1.25);
		\draw [very thick] (1,0) -- (1, .75);
		\draw [very thick] (4,2) -- (4,0);
		\draw [very thick] (4.25,1) -- (5,1);
	\end{pgfonlayer}
	
	\fill[blue!30!white] (.6,.6) rectangle (1.4,1.4);
	\draw [very thick] (.6, .6) rectangle (1.4, 1.4);
	\fill[blue!30!white] (3.6,.6) rectangle (4.4,1.4);
	\draw [very thick] (3.6,.6) rectangle (4.4,1.4);
\end{scope}

\begin{scope}[yshift = -6cm];
	\begin{pgfonlayer}{background}
		\draw [very thick] (0,1) -- (3.75,1);
		\draw [very thick] (1,2) -- (1,1.25);
		\draw [very thick] (1,0) -- (1, .75);
		\draw [very thick] (4,2) -- (4,0);
		\draw [very thick] (4.25,1) -- (5,1);
	\end{pgfonlayer}
	
	\fill[blue!30!white] (.6,.6) rectangle (4.4,1.4);
	\draw[very thick] (.6,.6) rectangle (4.4,1.4);
	\draw [very thick, dashed] (1.4,.6) -- (1.4,1.4);
	\draw [very thick, dashed] (3.6,.6) -- (3.6,1.4);
\end{scope}

\begin{scope}[xshift = 9cm]
	\draw [very thick] (0,1) -- (5,1);
	\draw [very thick] (1,2) -- (1,1.25);
	\draw [very thick] (1,0) -- (1, .75);
	\draw [very thick] (4,2) -- (4,1.25);
	\draw [very thick] (4,.75) -- (4,0);

	\begin{scope}[yshift = -3cm];
		\begin{pgfonlayer}{background}
			\draw [very thick] (0,1) -- (3.75,1);
			\draw [very thick] (1,2) -- (1,1.25);
			\draw [very thick] (1,0) -- (1, .75);
			\draw [very thick] (4,2) -- (4,0);
			\draw [very thick] (4.25,1) -- (5,1);
		\end{pgfonlayer}
	
		\fill[blue!30!white] (.6,.6) rectangle (1.4,1.4);
		\draw [very thick] (.6, .6) rectangle (1.4, 1.4);
		\fill[blue!90!red] (3.6,.6) rectangle (4.4,1.4);
		\draw [very thick] (3.6,.6) rectangle (4.4,1.4);
	\end{scope}

	\begin{scope}[yshift = -6cm];
		\begin{pgfonlayer}{background}
			\draw [very thick] (0,1) -- (1,1);
			\draw [very thick] (1,2) -- (1,1.25);
			\draw [very thick] (1,0) -- (1, .75);
			\draw [very thick] (4,2) -- (4,0);
			\draw [very thick] (4.25,1) -- (5,1);
		\end{pgfonlayer}

		\fill[blue!30!white] (.6,1.4) -- (1.4,1.4) -- (2.5,1) -- (1.4,.6) -- (.6,.6);
		\draw [very thick] (1.4,1.4) -- (.6,1.4) -- (.6,.6) -- (1.4, .6);
		\draw [very thick, dashed] (1.4,1.4) -- (1.4,.6);
		
		\fill[blue!90!red] (4.4,.6) -- (3.6,.6) -- (2.5,1) -- (3.6,1.4) -- (4.4,1.4);
		\draw [very thick] (3.6,.6) -- (4.4,.6) -- (4.4,1.4) -- (3.6,1.4);
		\draw [very thick, dashed] (3.6,.6) -- (3.6, 1.4);

		\draw [very thick] (1.4,.6) -- (3.6,1.4);	
		\draw [very thick] (1.4,1.4) -- (2.4, 1.0364);
		\draw [very thick] (2.6, .96364) -- (3.6,.6);
	\end{scope}
\end{scope}

\end{tikzpicture}$$
\caption{The disks and band associated to an alternating edge are on the left, and the disks and band associated to a non-alternating edge are on the right.}
\label{figure:TuraevAlternate}
\end{figure}

The Turaev surface of a link diagram and the Turaev genus of a link have the following properties. Proofs of these facts can be found in \cite{Turaev:SimpleProof, DFKLS:Jones}.
\begin{enumerate}
\item The Turaev surface $F(D)$ is a Heegaard surface in $S^3$, that is $S^3-F(D)$ is a union of two handelbodies.
\item The diagram $D$ is alternating on $F(D)$.
\item The Turaev surface is a sphere if and only if $D$ is a connected sum of alternating diagrams. Consequently, $g_T(L)=0$ if and only if $L$ is alternating.
\item The complement $F(D)-D$ is a collection of disks.
\end{enumerate}
The above conditions do not completely characterize Turaev surfaces. Let $g_{\alt}(L)$ be the minimal genus of Heegaard surface $F$ in $S^3$ on which the link $L$ has an alternating projection such that the complement of that projection to $F$ is a collection of disks. Adams \cite{Adams:ToroidallyAlternating} studies knots and links where $g_{\alt}(L) =1$, and Balm \cite{Balm:thesis} studies the behavior of $g_{\alt}(L)$ under connected sum. Lowrance \cite{Low:AltDist} constructs a family of links where $g_{\alt}(L)=1$, but the Turaev genus is arbitrarily large. Armond, Druivenga, and Kindred \cite{ADK:HeegaardTuraev} show how to determine whether a surface satisfying the above conditions is a Turaev surface using Heegaard diagrams. Indeed, the Heegaard diagrams corresponding to Turaev surfaces of genus one first inspired Theorem \ref{thm:genus1} and the subsequent work in this paper.

Like many link invariants defined as minimums over all diagrams, there is no algorithm to compute the Turaev genus of a link. Instead, our computations rely on various bounds of Turaev genus. The first bound follows immediately from Inequality \eqref{eq:thineq}. We have
$$g_T(L) \leq c(L) - \Span V_L(t)$$
where $c(L)$ is the minimum crossing number of $L$. Several other bounds on Turaev genus come from link homologies. 

Khovanov \cite{Khovanov:Jones} constructs a categorification $Kh(L)$ of the Jones polynomial, now known as Khovanov homology. Khovanov homology is a bi-graded $\mathbb{Z}$-module with homological grading $i$ and quantum grading $j$, and one may write $Kh(L)$ as a direct sum over its bi-graded summands $Kh(L)=\bigoplus_{i,j} Kh^{i,j}(L)$. Define 
\begin{align*}
\delta_{\min}(Kh(L)) =& \min\{j-2i | Kh^{i,j}(L)\neq 0\}~\text{and}\\
 \delta_{\max}(Kh(L))=&\max\{j-2i | Kh^{i,j}(L)\neq 0\}.
 \end{align*}
Champanerkar, Kofman, and Stoltzfus \cite{CKS:KhovanovTuraev} show that
\begin{equation}
\label{eq:KhovanovTuraev}
\delta_{\max}(Kh(L)) - \delta_{\min}(Kh(L)) -2 \leq 2g_T(L).
\end{equation}

A link diagram $D$ is {\em adequate} if the number of components in the all-$A$ (respectively all-$B$) state is strictly greater than the number of components in every state containing exactly one $B$-resolution (respectively exactly one $A$-resolution). A link is {\em adequate} if it has an adequate diagram. Khovanov \cite{Khovanov:Patterns} studies the Khovanov homology of adequate links, and Abe \cite{Abe:Adequate} proves that Inequality \eqref{eq:KhovanovTuraev} is tight when $L$ is adequate.

Ozsv\'ath and Szab\'o \cite{OS:HFK} and independently Rasmussen \cite{Rasmussen:Thesis} construct a categorification $\widehat{HFK}(K)$ of the Alexander polynomial of a knot $K$, called knot Floer homology. Knot Floer homology is also a bi-graded $\mathbb{Z}$-module with homological (or Maslov) grading $m$ and Alexander grading $s$, and one may write $\widehat{HFK}(K)$ as a direct sum over its bi-graded summands $\widehat{HFK}(K)=\bigoplus_{m,s}\widehat{HFK}_m(K,s)$. Define
\begin{align*}
\delta_{\min}(\widehat{HFK}(K)) = &\min\{s-m | \widehat{HFK}_m(K,s)\neq 0\}~\text{and}\\
\delta_{\max}(\widehat{HFK}(K))=&\max\{s-m | \widehat{HFK}_m(K,s)\neq 0\}.
\end{align*}
Lowrance \cite{Low:HFKTuraev} shows that 
\begin{equation}
\label{eq:HFKTuraev}
\delta_{\max}(\widehat{HFK}(K)) - \delta_{\min}(\widehat{HFK}(K)) - 1 \leq g_T(K).
\end{equation}

Let $\sigma(K)$ be the signature of $K$, let $\tau(K)$ be the Ozsv\'ath-Szab\'o $\tau$-invariant \cite{OS:Tau}, and let $s(K)$ be the Rasmussen $s$-invariant \cite{Rasmussen:Slice}. Dasbach and Lowrance \cite{DasLow:TuraevConcordance} show that
\begin{align}
\left| \tau(K) + \frac{\sigma(K)}{2}\right| \leq & g_T(K),\\
\frac{|s(K) + \sigma(K)|}{2} \leq & g_T(K),~\text{and}\\
\label{eq:taus}
\left| \tau(K) - \frac{s(K)}{2}\right| \leq & g_T(K).
\end{align}
Essentially all known computations of the Turaev genus of a link rely on some inequality among \eqref{eq:KhovanovTuraev} through \eqref{eq:taus}. Finding a new method for computing the Turaev genus remains a challenging open question.

\section{Alternating decomposition graphs}
\label{section:AltDecomp}
Throughout this section, we assume that $D$ is a link diagram, $G$ is the alternating decomposition graph of $D$, and $\mathbb{G}$ is the graph $G$ with the sphere embedding induced by $D$. We begin the section with some examples.

\begin{example}Figure \ref{figure:9_42} shows a diagram $D$ of the knot $9_{42}$ from Rolfsen's table, along with its alternating decomposition curves $\{\gamma_1,\gamma_2\}$. Since the alternating decomposition of $D$ has two curves that both intersect the same four non-alternating edges of $D$, it follows that the alternating decomposition graph of $D$ is $G=C_2^2$, the graph with two vertices and four parallel edges between them. In this example, $g_T(D)=1$ and since $9_{42}$ is non-alternating, it follows that $g_T(L)=1$.
\end{example}
\begin{figure}[h]
$$\begin{tikzpicture}

\fill[black!10!white] (-4,-2) rectangle (4,4.5);
\fill[white] (0,1) ellipse(1.2cm and 2cm);
\fill[black!10!white] (0,1) ellipse(1cm and 1.6cm);

\draw[rounded corners = 5mm] (.25,.25) -- (1,1) -- (0,2);
\draw[rounded corners = 5mm] (-.25,1.75) -- (-1,1) -- (0,0);
\draw[rounded corners = 5mm] (0,2) -- (-2,4) -- (-4,2) -- (-3.25,1.25);
\draw[rounded corners = 5mm] (-1.25,2.75) -- (-4,0) -- (-2,-2) -- (-1.25,-1.25);
\draw (-.75, -.75) -- (-.25, -.25);
\draw[rounded corners = 5mm] (-2.75, .75) -- (0,-2) -- (.75,-1.25);
\draw[rounded corners = 5mm] (0,0) -- (2,-2) -- (4,0) -- (3.25, .75);
\draw[rounded corners = 4mm] (1.25,-.75) -- (4,2) -- (1.5,4.5) -- (1.25,4.25);
\draw[rounded corners = 4mm] (2.75,1.25) -- (.5, 3.5) -- (.75,3.75);
\draw (.25,2.25) -- (.75, 2.75);
\draw[rounded corners = 4mm] (1.25, 3.25) -- (1.5,3.5) -- (.5,4.5) -- (-.75,3.25);

\draw[red, very thick] (0,1) ellipse(1cm and 1.6cm);
\draw[red, very thick] (0,1) ellipse(1.2cm and 2cm);

\draw (0,-2) node[below]{$D$};

\begin{scope}[xshift = 2cm]
\draw (6.5,1) circle (.4 cm);
\draw (6.5,1) circle (.6cm);

\fill[black!20!white] (6,1)circle (.2cm);
\draw (6,1) circle (.2cm);

\fill[black!20!white] (7,1) circle (.2cm);
\draw (7,1) circle (.2cm);

\draw (6.5,-2) node[below]{$G$};
\end{scope}

\end{tikzpicture}$$
\caption{A diagram $D$ of $9_{42}$ with its alternating regions shaded and its alternating decomposition graph $G=C_2^2$.}
\label{figure:9_42}
\end{figure}
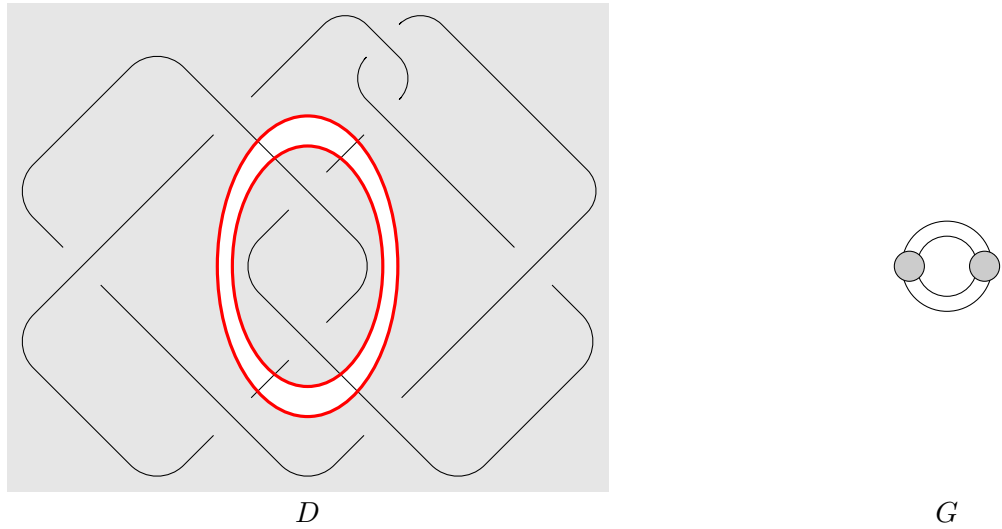

\begin{example}Figure \ref{figure:annulus} shows a connected link diagram $D$ with a disconnected alternating decomposition graph $G$. The alternating decomposition graph $G$ is disconnected when $D$ has an alternating region with more than one boundary component. In this case, the alternating decomposition graph $G$ is $C_2^2\sqcup C_2^2$, the disjoint union of two doubled two cycles. The disjoint union of two copies of the diagram from Figure \ref{figure:9_42} also has $C_2^2\sqcup C_2^2$ as its alternating decomposition graph.
\end{example}
\begin{figure}[h]
$$\begin{tikzpicture}[rounded corners = 5mm, thick]

\fill[black!10!white] (-2.5,-2) rectangle (3.5,5);
\fill[white] (-1.6, 3.6) rectangle (2.6,-1.6);
\fill[black!10!white](-1.3, 3.3) rectangle (2.3,-1.3);
\fill[white](-.6,2.7) rectangle (1.6, -.7);
\fill[black!10!white] (-.3,2.2) rectangle (1.3,-.2);

\draw (.4,.6) -- (0,1) -- (1,2) -- (1,4.7) -- (0,4.7) -- (0,4.1);
\draw (.6,1.4) -- (1,1) -- (0,0) -- (-2,0) -- (-2,4) -- (.9,4);
\draw (.6,.4) -- (1,0) -- (1.9,0);
\draw (2.1,0) -- (3,0) -- (3,4) -- (1.1,4);
\draw (.9,3) -- (-1,3) -- (-1,.1);
\draw (-1,-.1) -- (-1,-1) -- (2,-1) -- (2,3) -- (1.1,3);
\draw (0,3.9) -- (0,3.1);
\draw (0,2.9) -- (0,2) -- (.4,1.6);

\draw[red] (-.3,2.2) rectangle (1.3,-.2);
\draw[red] (-.6,2.7) rectangle (1.6, -.7);
\draw[red] (-1.3, 3.3) rectangle (2.3,-1.3);
\draw[red] (-1.6, 3.6) rectangle (2.6,-1.6);

\draw (0.5,-2) node[below]{$D$};

\draw (6.5,1) circle (.4 cm);
\draw (6.5,1) circle (.6cm);

\fill[black!20!white] (6,1)circle (.2cm);
\draw (6,1) circle (.2cm);

\fill[black!20!white] (7,1) circle (.2cm);
\draw (7,1) circle (.2cm);

\begin{scope}[xshift = 2cm]
\draw (6.5,1) circle (.4 cm);
\draw (6.5,1) circle (.6cm);

\fill[black!20!white] (6,1)circle (.2cm);
\draw (6,1) circle (.2cm);

\fill[black!20!white] (7,1) circle (.2cm);
\draw (7,1) circle (.2cm);
\end{scope}

\draw (7.5,-2) node[below]{$G$};

\end{tikzpicture}$$
\caption{The alternating decomposition of $D$ has an annular alternating region. Hence its alternating decomposition graph $G$ is disconnected.}
\label{figure:annulus}
\end{figure}
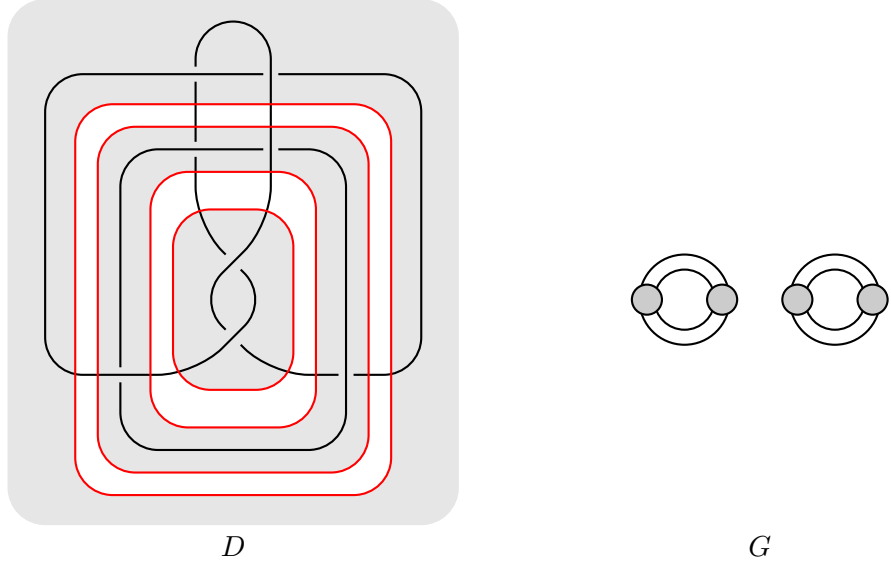

The embedding of $D$ into the plane induces an embedding of each component of the alternating decomposition graph $G$ onto a sphere. Each curve $\gamma_i$ of the alternating decomposition of $D$ is incident to two regions, precisely one of which contains crossings of $D$. In the examples of Figures \ref{figure:9_42} and \ref{figure:annulus}, the alternating regions with crossings are shaded, and the regions without crossings are unshaded. If $\gamma_i$ and $\gamma_j$ are different boundary curves of the same alternating region, then their associated vertices belong to different components of $G$. Let $\gamma_{i_1},\dots, \gamma_{i_k}$ be the curves of the alternating decomposition graph associated to all of the vertices of a particular component of $G$. One may consider the diagram $D$ as being embedded on the sphere $S$, and thus the curves $\gamma_{i_1},\dots,\gamma_{i_k}$ are also embedded on $S$. The embedding of this component of $G$ onto the sphere $S$ is obtained by considering the vertex associated to $\gamma_{i_j}$ to be the disk with boundary $\gamma_{i_j}$ containing the alternating region incident to $\gamma_{i_j}$. This disk may contain other curves from the alternating decomposition of $D$, but these other curves are associated to a different component of $G$. The edges of this component are the segments of the non-alternating edges of $D$ that go between two curves of the alternating decomposition of $D$. Thus each component of $G$ has an induced embedding onto a sphere.

Thistlethwaite \cite{Thistlethwaite:Breadth} proved that if $G$ is an alternating decomposition graph of some link diagram, then $G$ is planar, bipartite, and each vertex of $G$ has even degree. Our first result of this section is the converse.
\begin{proposition}
\label{prop:altdecompgraph}
Let $G$ be a planar, bipartite graph such that each vertex of $G$ has even degree. Then $G$ is the alternating decomposition graph of some link diagram $D$. Moreover, $D$ may be chosen to be adequate.
\end{proposition}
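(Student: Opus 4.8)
The plan is to reconstruct a link diagram from the combinatorial data of $G$ by interpreting its sphere embedding $\mathbb{G}$ directly. First I would handle the case where $G$ is a single vertex: here the diagram can be taken to be any reduced alternating diagram, which is trivially adequate and has $G$ as its alternating decomposition graph. So assume no component of $G$ is a single vertex. Since $G$ is planar, fix a plane (equivalently, sphere) embedding of each component; this gives us $\mathbb{G}$. Because $G$ is bipartite, the edges around each face alternate between the two color classes, so every face has even length; because every vertex has even degree, the edges around each vertex can also be consistently $2$-colored. The key observation is that these two conditions are exactly what is needed to two-color the edge-ends (``$+$'' and ``$-$'') consistently at every vertex and every face, which is the local data encoding over/under information at a non-alternating edge of a link diagram.

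The construction then proceeds as follows. Replace each vertex $v$ of $\mathbb{G}$, which has some even degree $2m$, by a disk $\Delta_v$ whose boundary will be a curve $\gamma_v$ of the alternating decomposition; inside $\Delta_v$ place an alternating tangle with $2m$ endpoints on $\partial \Delta_v$, arranged so that the strands running between consecutive endpoints alternate over/under in the pattern dictated by the edge-coloring around $v$. Replace each edge $e$ of $\mathbb{G}$ by a band joining the two disks at its endpoints; the band is given a half-twist (i.e.\ it corresponds to a non-alternating edge of the diagram) with the over/under type ($+$ or $-$) determined by the color of $e$. The cyclic order of the bands around each disk is the one inherited from the sphere embedding $\mathbb{G}$. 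One must check that these local alternating tangles can be chosen compatibly — this is where the bipartite condition is essential, guaranteeing the $+/-$ pattern around each face closes up — and that the result is a genuine link diagram $D$ on $S^2$ (each face of $\mathbb{G}$ corresponds to a region of $S^2 \setminus \{\gamma_v\}$ with no crossings, and each vertex-disk becomes an alternating region). Running the alternating decomposition procedure on $D$ recovers precisely $\mathbb{G}$: the curves $\gamma_v$ are the decomposition curves, and the twisted bands are exactly the non-alternating edges.

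The remaining point is adequacy, which I expect to be the main obstacle and the one requiring the most care. The freedom we have is in the choice of alternating tangle inserted at each vertex and the number of crossings in each band (the length of the doubled path replacing a single edge, or more simply, the number of half-twists). I would choose each alternating tangle to be itself adequate — for instance a sufficiently ``spread out'' alternating tangle — and insert enough crossings. Adequacy is a local condition on the all-$A$ and all-$B$ states: one needs that changing a single resolution never merges two state circles into one. Since $D$ is built from alternating (hence adequate) pieces glued along non-alternating edges, and the non-alternating bands contribute crossings whose $A$- and $B$-resolutions are ``transverse'' to the way the alternating pieces close up, a careful bookkeeping of the all-$A$ and all-$B$ state circles — tracking which circles pass through the vertex-tangles and which are created by the twisted bands — should show that by taking the tangles adequate and the twist regions long enough, no bad merge occurs. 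Concretely I would show that the state circles of $D$ decompose into those coming from the (adequate) vertex tangles and those bounding the bands, and verify the adequacy inequality $s_A$-style circle count stays strict under a single resolution change by localizing the change to one such piece. This is routine once the bookkeeping is set up, but it is the step where the statement ``$D$ may be chosen to be adequate'' genuinely uses the flexibility in the construction rather than just the hypotheses on $G$.
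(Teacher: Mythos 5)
Your construction of the diagram itself is essentially the paper's: fix a planar embedding, use bipartiteness (together with the even degrees) to assign ``$+$''/``$-$'' signs to the edges so that they alternate around each vertex, and insert an alternating tangle at each vertex whose boundary signs match the incident edge signs, so that the edges of $G$ become exactly the non-alternating edges of $D$. One small point you should make explicit is the paper's condition that each face of the inserted tangle meets the boundary circle in at most one arc; without it the alternating decomposition of $D$ can acquire extra curves (as in the annular example of Figure \ref{figure:annulus}), so the decomposition graph need not be $G$.

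The genuine gap is the adequacy claim, which you defer to ``routine bookkeeping,'' and the specific flexibility you propose to exploit is not actually available. An edge of $G$ must correspond to a single non-alternating edge of $D$; if you ``insert enough crossings'' into such a band (your ``twist regions long enough''), the new crossings form new alternating regions, so the alternating decomposition graph of the resulting diagram is no longer $G$ --- you cannot lengthen the bands without changing the very object you are trying to realize. Likewise, choosing each vertex tangle to be adequate is not by itself enough: the dangerous crossings are those both of whose smoothing arcs could lie on a single state circle that exits the tangle and travels through several other tangles before returning, and adequacy of the individual tangles says nothing about these global circles. The paper closes exactly this hole by choosing the concrete tangles of Figure \ref{figure:adequate}, for which one checks that in the all-$A$ and all-$B$ states every crossing is incident to at least one state circle lying entirely inside a single tangle, and in fact to two \emph{distinct} circles; hence switching any single resolution merges two circles and strictly decreases the circle count, which is precisely $A$- and $B$-adequacy. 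Your proposal is missing this choice of tangles (or any equally explicit mechanism) and the accompanying verification, so the ``moreover'' half of Proposition \ref{prop:altdecompgraph} is not yet proved.
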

\begin{proof}
Fix a planar embedding for $G$. For each vertex $v_i$ in $G$, choose an alternating tangle $T_i$ with $\deg v_i$ endpoints along the boundary. Each tangle $T_i$ must contain at least one crossing, and each face of the tangle $T_i$ can only meet the boundary circle in at most one arc. Assign to each endpoint the sign ``$+$'' or ``$-$'' based on whether the strand emanating from that point is the over-strand or the under-strand, respectively, of the first crossing it meets. The signs ``$+$" and ``$-$" will alternate around the boundary of $T_i$. Since $G$ is bipartite, the edges of $G$ can also be assigned ``$+$'' or ``$-$'' in such a way that the signs alternate around each vertex in the planar embedding.  Replace $v_i$ with $T_i$ in the planar embedding of $G$ so that each endpoint of an arc in $T_i$ and the edge of $G$ which it gets connected to have the same sign. This produces a link diagram with the property that the non-alternating arcs exactly correspond to the edges of $G$.

To make the link diagram adequate, appropriate tangles must be chosen for the $T_i$. Choosing the tangles shown in Figure~\ref{figure:adequate} will produce an adequate link diagram. This is because the circles in the all-$A$ and all-$B$ resolutions come in two types: Those completely contained in one of the tangles, and those that pass through multiple tangles. Each crossing is either between two distinct circles of the first type, or between a circle of the first type and a circle of the second type. Specifically, each crossing is always between two distinct circles. Thus if one crossing is changed from the $A$-resolution to the $B$-resolution in the all-$A$ state (or vice-versa in the all-$B$ state), then the number of circles will decrease by one.
\end{proof}
\begin{figure}[h]
$$\begin{tikzpicture}[thick]
\draw (0,0) circle (2cm);
\draw (-2.3,0) node {$+$};
\draw (2.3,0) node{$-$};

\draw (0,-2.5) node{$T_2$};

\begin{scope}[rounded corners = 2mm, xshift = -1.5cm, yshift = -.5cm]
	\draw (-.5,.5) -- (0,0) -- (1,1) -- (1.4,.6);
	\draw (2,1.5) -- (.5,1.5) -- (0,1) -- (.4,.6);
	\draw (.6,.4) -- (1,0) -- (2,1) -- (2.4,.6);
	\draw (1.6,.4) -- (2,0) -- (3,1) -- (2.5,1.5) -- (2,1.5);
	\draw (2.6,.4) -- (3,0) -- (3.5,.5);
\end{scope}

\begin{scope}[xshift = 6cm]
\draw (0,0) circle (2cm);

\draw (-1.8,-1.4) node{$+$};
\draw (-1.8, 1.4) node{$-$};
\draw (1.8, 1.4) node{$+$};
\draw (1.8, -1.4) node{$-$};
\draw (0,-2.5) node {$T_4$};

\begin{scope}[rounded corners = 2mm, xshift = -1.5 cm, yshift = -1.32cm]
	\draw (0,0) -- (1,1) -- (1.4,.6);
	\draw (0,1.64) -- (-.32,1.32) -- (0,1) -- (.4,.6);
	\draw (.6,.4) -- (1,0) -- (2,1) -- (2.4,.6);
	\draw (1.6,.4) -- (2,0) -- (3,1) -- (3.32,1.32) -- (3,1.64);
	\draw (2.6,.4) -- (3,0);
\end{scope}

\begin{scope}[rounded corners = 2mm, xshift = -1.5 cm, yshift = .32cm]
	\draw (0,0) -- (1,1) -- (1.4,.6);
	\draw (0,1) -- (.4,.6);
	\draw (.6,.4) -- (1,0) -- (2,1) -- (2.4,.6);
	\draw (1.6,.4) -- (2,0) -- (3,1);
	\draw (2.6,.4) -- (3,0);
\end{scope}

\end{scope}


\begin{scope}[xshift = 11cm, yshift = -1.73 cm]

\begin{scope}[scale=.685, rounded corners = 1.5mm]

	\draw (0,0) -- (1,1) -- (1.4,.6);
	\draw (0,1) -- (.4,.6);
	\draw (.6,.4) -- (1,0) -- (2,1) -- (2.4,.6);
	\draw (1.6,.4) -- (2,0) -- (3,1);
	\draw (2.6,.4) -- (3,0);
	
	\draw (0,1) -- (-1,2) -- (-.6,2.4);
	\draw (-1,1) -- (-.6,1.4);
	\draw (-.4,1.6) -- (0,2) -- (-1,3) -- (-.6,3.4);
	\draw (-.4,2.6) -- (0,3) -- (-1,4);
	\draw (-.4,3.6) -- (0,4);
	
	\begin{scope}[yshift = 4cm]
		\draw (0,0) -- (1,1) -- (1.4,.6);
	\draw (0,1) -- (.4,.6);
	\draw (.6,.4) -- (1,0) -- (2,1) -- (2.4,.6);
	\draw (1.6,.4) -- (2,0) -- (3,1);
	\draw (2.6,.4) -- (3,0);
	\end{scope}
	
	\fill (3.5,2.5) circle (.08 cm);
	\fill (3.5,3) circle (.08 cm);
	\fill (3.5,2) circle (.08cm);

\draw (1.5,2.5) circle (2.92cm);

\end{scope} 

\draw (-.2,-.4) node{$+$};
\draw (-1,.6) node{$-$};
\draw (-1,2.9) node{$+$};
\draw (-.2,3.7) node{$-$};
\draw (2.2,3.7) node{$+$};
\draw (2.2,-.4) node{$-$};

\end{scope}

\draw (12,-2.5) node{$T_{2k}$};

\end{tikzpicture}$$
\caption{Inserting the tangles $T_{2i}$ into an alternating decomposition graph $G$ results in an adequate link diagram $D$ whose alternating decomposition graph is $G$.}
\label{figure:adequate}
\end{figure}
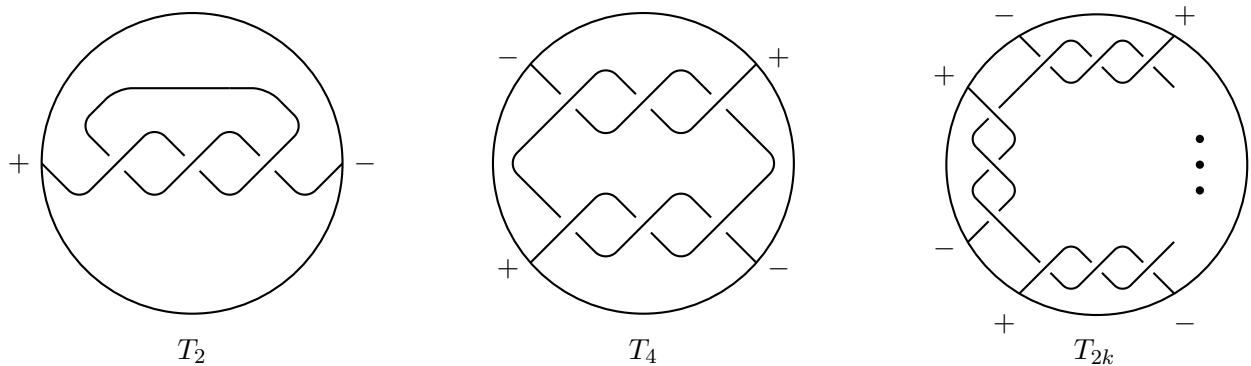

Abe \cite{Abe:Adequate} proves that if $D$ is adequate, then $D$ minimizes Turaev genus, that is $g_T(D)=g_T(L)$. Consequently, we have the following corollary.
\begin{corollary}
Let $G$ be a planar, bipartite graph such that each vertex has even degree. Then there is a link diagram $D$ whose alternating decomposition graph is $G$ such that $g_T(D)=g_T(L)$.
\end{corollary}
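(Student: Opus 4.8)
The plan is short: the corollary follows by chaining Proposition~\ref{prop:altdecompgraph} with Abe's theorem that adequate diagrams minimize Turaev genus. First I would invoke Proposition~\ref{prop:altdecompgraph}: since $G$ is planar, bipartite, and every vertex of $G$ has even degree, that proposition produces a link diagram $D$ whose alternating decomposition graph is $G$, and --- crucially --- the construction there (inserting the adequate tangles of Figure~\ref{figure:adequate} at the vertices of a fixed planar embedding of $G$) can be taken so that $D$ is adequate. Let $L$ denote the link of which $D$ is a diagram.

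Next I would recall why adequacy yields the conclusion. By Abe's theorem~\cite{Abe:Adequate}, an adequate diagram satisfies $g_T(D)=g_T(L)$; one inequality $g_T(L)\le g_T(D)$ is immediate from the definition of $g_T(L)$ as a minimum over all diagrams, and the reverse inequality is the content of Abe's result. Since the diagram $D$ furnished above is adequate, this is precisely the desired equality, so the proof is complete.

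The main --- and essentially only --- point requiring care is that the diagram supplied by Proposition~\ref{prop:altdecompgraph} simultaneously has alternating decomposition graph equal to $G$ and is adequate; both assertions are established in the proof of that proposition, so no additional argument is needed. I do not anticipate any obstacle beyond correctly quoting these two prior results and observing that the "Moreover" clause of Proposition~\ref{prop:altdecompgraph} is exactly what makes Abe's theorem applicable.
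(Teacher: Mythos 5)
Your proposal is correct and matches the paper's argument exactly: the corollary is obtained by combining the ``Moreover'' clause of Proposition \ref{prop:altdecompgraph} (an adequate diagram $D$ with alternating decomposition graph $G$) with Abe's theorem that adequate diagrams satisfy $g_T(D)=g_T(L)$. Nothing further is needed.
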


An {\em oriented ribbon graph} is a graph $G$ cellularly embedded in an oriented surface $\Sigma$. The genus of an oriented ribbon graph is the genus of $\Sigma$. We often visualize the vertices of an oriented ribbon graph as round disks and the edges of an oriented ribbon graph as rectangular bands attached on opposite ends to the round vertices. The sphere embedding $\mathbb{G}$ of an alternating decomposition graph $G$ is a ribbon graph embedded on a disjoint union of spheres. From $\mathbb{G}$, we construct another ribbon graph $\widetilde{\mathbb{G}}$ such that  the genus of $\widetilde{\mathbb{G}}$ is equal to $g_T(D)$. The ribbon graph $\widetilde{\mathbb{G}}$ has the same vertices and edges as $\mathbb{G}$. To obtain $\widetilde{\mathbb{G}}$ from $\mathbb{G}$ a half-twist is applied to each edge band of $\mathbb{G}$. We say that $\widetilde{\mathbb{G}}$ is the {\em twisted embedding of the alternating decomposition graph $G$}. See Figure \ref{figure:twisted}. The operation of twisting some edges in a ribbon graph has been recently studied by Ellis-Monaghan and Moffatt under the name ``partial petrials" \cite{EMM:RibbonGraphBook}.
\begin{figure}[h]
$$\begin{tikzpicture}
\draw (-2,0) node{$D$};
\draw (.84, .55) arc (34:206:1cm);
\draw (-.84, -.55) arc (214: 386: 1cm);

\begin{scope}[xshift = 6 cm]
	\draw (.84, .55) arc (34:206:1cm);
	\draw (-.84, -.55) arc (214: 386: 1cm);
\end{scope}

\begin{scope}[xshift = 3cm]
	\draw (-.84, .55) arc (146: -26: 1cm);
	\draw (.84, -.55) arc (-34: - 206: 1cm);
\end{scope}

\begin{scope}[xshift = 9cm]
	\draw (-.84, .55) arc (146: -26: 1cm);
	\draw (.84, -.55) arc (-34: - 206: 1cm);
\end{scope}

\draw (-.8,.5) -- (3.8,.5);
\draw (4,.5) -- (5,.5);
\draw (5.2,.5) -- (9.6,.5);
\draw [rounded corners = 3mm] (-1,.5) -- (-1.5,.5) -- (-2,1) -- (-1.5,1.5) -- (10.5,1.5) -- (11,1) -- (10.5,.5) -- (10,.5);
\draw [rounded corners = 3mm] (.8,-.5) -- (-1.5,-.5) -- (-2,-1) -- (-1.5,-1.5) -- (10.5,-1.5) -- (11,-1) -- (10.5,-.5) -- (8.2,-.5);
\draw (1,-.5) -- (2,-.5);
\draw (2.2,-.5) -- (6.8,-.5);
\draw (7,-.5) -- (8, -.5);

\draw[very thick,->] (4.5,-1.75) -- (4.5,-2.75);
\draw[very thick,->, yshift=-4.5cm] (4.5,-1.75) -- (4.5,-2.75);


\begin{scope}[yshift = -4.5cm]
\draw (-2,0) node{$\mathbb{G}$};

\begin{scope}[yshift = -2cm]
\fill[blue!30!white] (-1.25,.4) arc (-90:-270:.6cm);
\fill[blue!30!white] (-1.26,1.6) rectangle (-1.24,1.4);
\fill[blue!30!white] (-1.26,.6) rectangle (-1.24,.4);
\fill[white](-1.25,.6) arc (-90:-270:.4cm);
\fill[white] (-1.26,.6) rectangle (-1.24,1.4);
\draw (-1.25,.6) arc (-90:-270:.4cm);
\draw (-1.25,.4) arc (-90:-270:.6cm);
\end{scope}

\fill[blue!30!white] (-1.25,.4) arc (-90:-270:.6cm);
\fill[blue!30!white] (-1.26,1.6) rectangle (-1.24,1.4);
\fill[blue!30!white] (-1.26,.6) rectangle (-1.24,.4);
\fill[white](-1.25,.6) arc (-90:-270:.4cm);
\fill[white] (-1.26,.6) rectangle (-1.24,1.4);
\draw (-1.25,.6) arc (-90:-270:.4cm);
\draw (-1.25,.4) arc (-90:-270:.6cm);

\fill[blue!30!white] (10.25,.4) arc (-90:90:.6cm);
\fill[blue!30!white] (10.24,.4) rectangle (10.26,.6);
\fill[blue!30!white] (10.24,1.4) rectangle (10.26,1.6);
\fill[white](10.25,.6) arc (-90:90:.4cm);
\fill[white] (10.24,.6) rectangle (10.26,1.4);
\draw (10.25,.4) arc (-90:90:.6cm);
\draw (10.24,.6) arc (-90:90:.4cm);

\begin{scope}[yshift=-2cm]
\fill[blue!30!white] (10.25,.4) arc (-90:90:.6cm);
\fill[blue!30!white] (10.24,.4) rectangle (10.26,.6);
\fill[blue!30!white] (10.24,1.4) rectangle (10.26,1.6);
\fill[white](10.25,.6) arc (-90:90:.4cm);
\fill[white] (10.24,.6) rectangle (10.26,1.4);
\draw (10.25,.4) arc (-90:90:.6cm);
\draw (10.24,.6) arc (-90:90:.4cm);
\end{scope}

\fill[blue!30!white] (-1.25,1.6) -- (10.25,1.6) -- (10.25, 1.4) -- (-1.25,1.4);
\draw (10.25,1.6) -- (-1.25,1.6);
\draw (10.25,1.4) -- (-1.25,1.4);

\begin{scope}[yshift=-3cm]
\fill[blue!30!white] (-1.25,1.6) -- (10.25,1.6) -- (10.25, 1.4) -- (-1.25,1.4);
\draw (10.25,1.6) -- (-1.25,1.6);
\draw (10.25,1.4) -- (-1.25,1.4);

\end{scope}

\fill[blue!30!white] (-1.25,.4) -- (-1.25,.6) -- (10.25,.6) -- (10.25,.4);
\fill[blue!30!white] (-1.25,-.4) -- (-1.25,-.6) -- (10.25,-.6) -- (10.25,-.4);
\draw (-1.25,.4) -- (10.25,.4);
\draw (-1.25,.6) -- (10.25,.6);
\draw (-1.25,-.4) -- (10.25,-.4);
\draw (-1.25,-.6) -- (10.25,-.6);

\fill[blue!30!white] (0,0) circle (1.1cm);
\draw (0,0) circle (1.1cm);

\begin{scope}[xshift = 6 cm]
\fill[blue!30!white] (0,0) circle (1.1cm);
\draw (0,0) circle (1.1cm);
\end{scope}

\begin{scope}[xshift = 3cm]
\fill[blue!30!white] (0,0) circle (1.1cm);
\draw (0,0) circle (1.1cm);
\end{scope}

\begin{scope}[xshift = 9cm]
\fill[blue!30!white] (0,0) circle (1.1cm);
\draw (0,0) circle (1.1cm);
\end{scope}
\end{scope}


\begin{scope}[yshift = -9cm]
\draw (-2,0) node{$\widetilde{\mathbb{G}}$};
\begin{scope}[yshift = -2cm]
\fill[blue!30!white] (-1.25,.4) arc (-90:-270:.6cm);
\fill[blue!30!white] (-1.26,1.6) rectangle (-1.24,1.4);
\fill[blue!30!white] (-1.26,.6) rectangle (-1.24,.4);
\fill[white](-1.25,.6) arc (-90:-270:.4cm);
\fill[white] (-1.26,.6) rectangle (-1.24,1.4);
\draw (-1.25,.6) arc (-90:-270:.4cm);
\draw (-1.25,.4) arc (-90:-270:.6cm);
\end{scope}

\fill[blue!30!white] (-1.25,.4) arc (-90:-270:.6cm);
\fill[blue!30!white] (-1.26,1.6) rectangle (-1.24,1.4);
\fill[blue!30!white] (-1.26,.6) rectangle (-1.24,.4);
\fill[white](-1.25,.6) arc (-90:-270:.4cm);
\fill[white] (-1.26,.6) rectangle (-1.24,1.4);
\draw (-1.25,.6) arc (-90:-270:.4cm);
\draw (-1.25,.4) arc (-90:-270:.6cm);

\fill[blue!90!red] (10.25,.4) arc (-90:90:.6cm);
\fill[blue!90!red] (10.24,.4) rectangle (10.26,.6);
\fill[blue!90!red] (10.24,1.4) rectangle (10.26,1.6);
\fill[white](10.25,.6) arc (-90:90:.4cm);
\fill[white] (10.24,.6) rectangle (10.26,1.4);
\draw (10.25,.4) arc (-90:90:.6cm);
\draw (10.24,.6) arc (-90:90:.4cm);

\begin{scope}[yshift=-2cm]
\fill[blue!90!red] (10.25,.4) arc (-90:90:.6cm);
\fill[blue!90!red] (10.24,.4) rectangle (10.26,.6);
\fill[blue!90!red] (10.24,1.4) rectangle (10.26,1.6);
\fill[white](10.25,.6) arc (-90:90:.4cm);
\fill[white] (10.24,.6) rectangle (10.26,1.4);
\draw (10.25,.4) arc (-90:90:.6cm);
\draw (10.24,.6) arc (-90:90:.4cm);
\end{scope}

\fill[blue!30!white] (-1.25,1.6) -- (4.2,1.6) -- (4.5,1.5) -- (4.2,1.4) -- (-1.25,1.4);
\fill[blue!90!red] (10.25,1.6) -- (4.8,1.6) -- (4.5,1.5) -- (4.8,1.4) -- (10.25,1.4);
\draw (10.25,1.6) -- (4.8,1.6) -- (4.2,1.4) -- (-1.25,1.4);
\draw (10.25,1.4) -- (4.8,1.4) -- (4.2,1.6) -- (-1.25, 1.6);

\begin{scope}[yshift=-3cm]
\fill[blue!30!white] (-1.25,1.6) -- (4.2,1.6) -- (4.5,1.5) -- (4.2,1.4) -- (-1.25,1.4);
\fill[blue!90!red] (10.25,1.6) -- (4.8,1.6) -- (4.5,1.5) -- (4.8,1.4) -- (10.25,1.4);
\draw (10.25,1.6) -- (4.8,1.6) -- (4.2,1.4) -- (-1.25,1.4);
\draw (10.25,1.4) -- (4.8,1.4) -- (4.2,1.6) -- (-1.25, 1.6);

\end{scope}

\fill[blue!30!white] (-1.25,.4) -- (-1.25,.6) -- (1.2,.6) -- (1.5,.5) -- (1.2,.4) -- (-1.25,.4);
\fill[blue!30!white, xshift=6cm] (0,.4) -- (0,.6) -- (1.2,.6) -- (1.5,.5) -- (1.2,.4) -- (0,.4);
\fill[blue!30!white] (6,.6) -- (6,.4) -- (4.8,.4) -- (4.5,.5) -- (4.8,.6) -- (6,.6);
\fill[blue!90!red] (1.5,.5) -- (1.8,.6) -- (4.2,.6) -- (4.5,.5) -- (4.2,.4) -- (1.8,.4) -- (1.5,.5);
\fill[blue!90!red] (7.5,.5) -- (7.8,.6) -- (10.25,.6) -- (10.25,.4) -- (7.8,.4);
\draw (-1.25,.6) -- (1.2,.6) -- (1.8,.4) -- (4.2,.4) -- (4.8,.6) -- (7.2,.6) -- (7.8,.4) -- (10.25,.4);
\draw (-1.25,.4) -- (1.2,.4) -- (1.8,.6) -- (4.2,.6) -- (4.8,.4) -- (7.2,.4) -- (7.8,.6) -- (10.25,.6);

\begin{scope}[yshift = -1cm]

\fill[blue!30!white] (6,.6) -- (6,.4) -- (4.8,.4) -- (4.5,.5) -- (4.8,.6) -- (6,.6);
\fill[blue!30!white] (-1.25,.4) -- (-1.25,.6) -- (1.2,.6) -- (1.5,.5) -- (1.2,.4) -- (-1.25,.4);
\fill[blue!30!white, xshift=6cm] (0,.4) -- (0,.6) -- (1.2,.6) -- (1.5,.5) -- (1.2,.4) -- (0,.4);
\fill[blue!90!red] (1.5,.5) -- (1.8,.6) -- (4.2,.6) -- (4.5,.5) -- (4.2,.4) -- (1.8,.4) -- (1.5,.5);
\fill[blue!90!red] (7.5,.5) -- (7.8,.6) -- (10.25,.6) -- (10.25,.4) -- (7.8,.4);
\draw (-1.25,.6) -- (1.2,.6) -- (1.8,.4) -- (4.2,.4) -- (4.8,.6) -- (7.2,.6) -- (7.8,.4) -- (10.25,.4);
\draw (-1.25,.4) -- (1.2,.4) -- (1.8,.6) -- (4.2,.6) -- (4.8,.4) -- (7.2,.4) -- (7.8,.6) -- (10.25,.6);
\end{scope}

\fill[blue!30!white] (0,0) circle (1.1cm);
\draw (0,0) circle (1.1cm);

\begin{scope}[xshift = 6 cm]
\fill[blue!30!white] (0,0) circle (1.1cm);
\draw (0,0) circle (1.1cm);
\end{scope}

\begin{scope}[xshift = 3cm]
\fill[blue!90!red] (0,0) circle (1.1cm);
\draw (0,0) circle (1.1cm);
\end{scope}

\begin{scope}[xshift = 9cm]
\fill[blue!90!red] (0,0) circle (1.1cm);
\draw (0,0) circle (1.1cm);
\end{scope}
\end{scope}

\end{tikzpicture}$$
\caption{The link diagram $D$, the sphere embedding $\mathbb{G}$ of its alternating decomposition graph $G$, and the twisted embedding $\widetilde{\mathbb{G}}$ of $G$.}
\label{figure:twisted}
\end{figure}
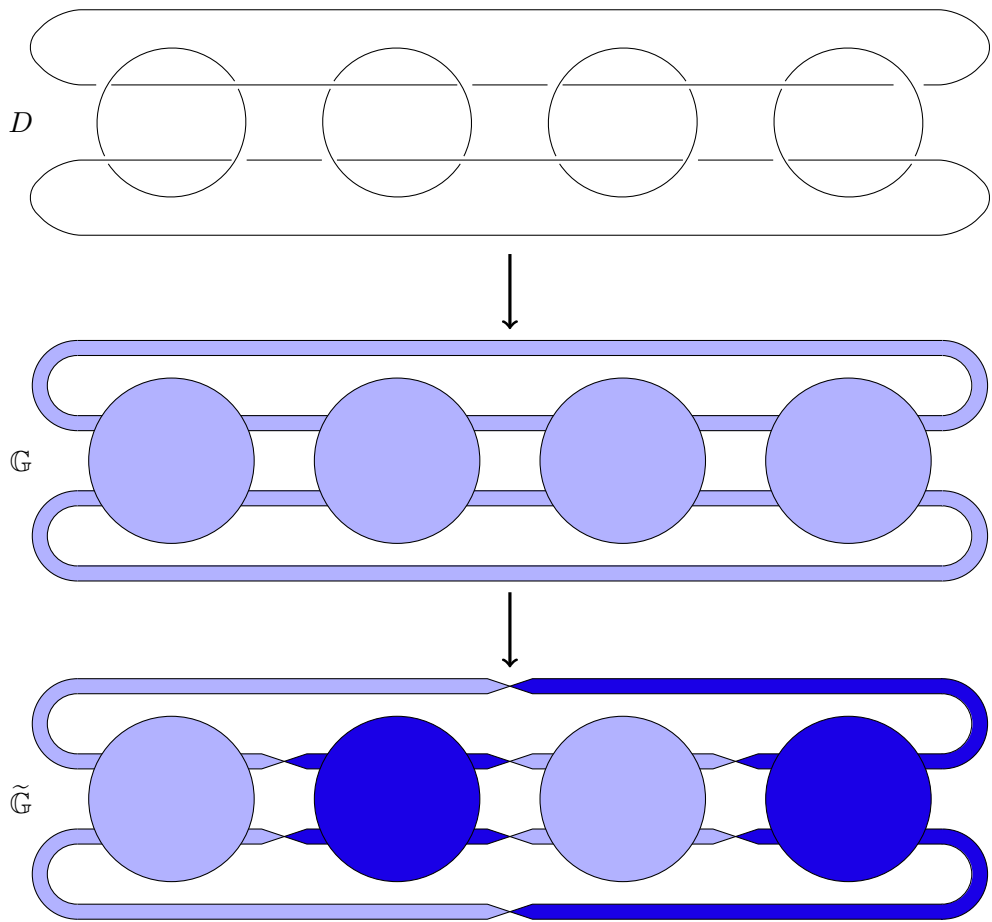

\begin{proposition}
\label{prop:twisted}
Let $\widetilde{\mathbb{G}}$ be the twisted embedding of the alternating decomposition graph of a link diagram $D$. The genus of $\widetilde{\mathbb{G}}$ is $g_T(D)$.
\end{proposition}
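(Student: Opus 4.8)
The plan is to compare the Turaev surface $F(D)$ with a surface built directly from the twisted ribbon graph $\widetilde{\mathbb{G}}$, and to identify both genus computations through Euler characteristic bookkeeping. Recall from the discussion preceding Figure \ref{figure:TuraevAlternate} that $D$ sits on $F(D)$, that $F(D)-D$ is a union of disks, and that the complement $S^3 \setminus F(D)$ consists of two handlebodies, one containing the all-$A$ state and one containing the all-$B$ state of $D$. The key idea is that $F(D)$ can be reassembled by taking one disk for each alternating region of $D$ (equivalently, one disk for each \emph{vertex} of $\mathbb{G}$, since a vertex records a boundary curve $\gamma_i$ of an alternating region, and an alternating region with $b$ boundary curves contributes a sphere-with-$b$-holes that retracts appropriately) and one band for each non-alternating edge of $D$ (equivalently, each edge of $\mathbb{G}$), glued exactly as the edges of $\mathbb{G}$ are glued to its vertices — \emph{except} that each band carries a half-twist, precisely because a non-alternating edge of $D$ is replaced by a twisted band in the Turaev construction (Figure \ref{figure:TuraevAlternate}, right). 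That half-twisted gluing is by definition the twisted embedding $\widetilde{\mathbb{G}}$, so $F(D)$ deformation retracts onto a spine that is exactly the ribbon graph $\widetilde{\mathbb{G}}$, and capping the boundary circles of $\widetilde{\mathbb{G}}$ with disks recovers $F(D)$ up to homeomorphism.

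First I would make precise the claim that replacing each alternating region by a single $0$-handle and each non-alternating edge by a $1$-handle yields the same surface as the band-and-disk construction of $F(D)$: inside each alternating region the portion of the Turaev cobordism is itself a planar surface (the crossings and alternating bands there, capped off), hence a disk with some holes, and collapsing each such region to a disk per boundary curve while keeping track of the non-alternating bands does not change the homeomorphism type after the final capping. This is the step where I expect the most care to be needed — one must verify that no genus is hidden inside an alternating region and that the combinatorics of how bands attach to the boundary of an alternating region matches the rotation system at the corresponding vertex of $\mathbb{G}$. The alternating-on-$F(D)$ property from item (2) in the list in Section \ref{section:Turaev}, together with the fact (item (4)) that $F(D)-D$ is a union of disks, is what forces the alternating-region pieces to be planar.

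Once the identification $F(D) \cong (\text{surface of } \widetilde{\mathbb{G}} \text{ with boundary capped})$ is established, the genus computation is routine Euler characteristic arithmetic. The ribbon graph $\widetilde{\mathbb{G}}$ has $v$ vertices and $e$ edges (with $v$ the number of vertices of $G$ and $e = e(G)$ the number of non-alternating edges of $D$), its underlying open surface has Euler characteristic $v - e$, and capping its $f$ boundary circles gives a closed surface $\Sigma$ with $\chi(\Sigma) = v - e + f$; the number of components $c$ of $\Sigma$ equals the number of components of $G$ together with an adjustment for alternating regions with several boundary curves. Then $2g_T(D) = 2c - \chi(\Sigma)$. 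To finish I would match this against Equation \eqref{equation:Turaevgenus}, $g_T(D) = \tfrac12(2k(D) + c(D) - s_A(D) - s_B(D))$, by checking that the boundary circles of $\widetilde{\mathbb{G}}$ are in bijection with the components of the all-$A$ and all-$B$ states of $D$ — each boundary arc of a twisted band lies in one state, while the boundary of a vertex disk is subdivided by its incident bands into arcs that thread through these state circles — so that $f = s_A(D) + s_B(D)$ after accounting for how alternating regions contribute, and similarly reconcile the vertex count and component count with $c(D)$ and $k(D)$. Alternatively, and perhaps more cleanly, I would avoid Equation \eqref{equation:Turaevgenus} entirely and argue the homeomorphism $F(D)\cong\Sigma$ directly from Turaev's construction, which makes the genus equality immediate; the Euler characteristic computation of $g(\widetilde{\mathbb{G}})$ is then the only remaining ingredient and it is purely formal.
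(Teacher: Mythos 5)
Your overall architecture --- planarity of the alternating-region pieces of the Turaev surface, and identification of the twisted bands with the edges of $\widetilde{\mathbb{G}}$ --- is the same as the paper's, but the central claim you lean on is false, and it fails exactly at the point that makes the proposition delicate. You assert that $F(D)$ deformation retracts onto $\widetilde{\mathbb{G}}$ and that capping the boundary circles of $\widetilde{\mathbb{G}}$ recovers $F(D)$ up to homeomorphism, justified by saying that collapsing an alternating region with $b$ boundary curves (a sphere with $b$ holes) to one disk per boundary curve ``does not change the homeomorphism type after the final capping.'' It does change it: when some alternating region has $b\geq 2$ boundary curves, the corresponding $b$ vertices of $G$ lie in $b$ \emph{different} components of $\widetilde{\mathbb{G}}$, so the capped surface of $\widetilde{\mathbb{G}}$ has $k(G)$ components while $F(D)$ has only $k(D)<k(G)$. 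For the diagram of Figure \ref{figure:annulus}, $F(D)$ is a connected genus-two surface while the capped $\widetilde{\mathbb{G}}$ is a disjoint union of two genus-one surfaces: there is no homeomorphism and no deformation retraction (and even in the connected case a closed surface cannot deformation retract onto a ribbon graph with boundary; what you mean is that the partially capped surface retracts onto a spine, and in the annular case even that fails by connectivity). The proposition survives only because genus is taken as the sum over components, and replacing $b$ disjoint vertex disks by a sphere with $b$ holes amounts to a connected sum of $b$ distinct components, which preserves total genus. This is precisely what the paper's proof makes explicit: it builds an intermediate surface $\Sigma$ from $\widetilde{\mathbb{G}}$ by connect-summing, inside the vertex disks, the components associated to the boundary curves of each alternating region, identifies $\Sigma$ with the partially capped Turaev surface, and only then concludes $g(\widetilde{\mathbb{G}})=g(\Sigma)=g_T(D)$. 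Your write-up never isolates this connected-sum/genus-additivity step; it asserts a homeomorphism that does not exist.

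Your fallback route via Equation \eqref{equation:Turaevgenus} is viable in principle but is only a plan as written: the boundary circles of $\widetilde{\mathbb{G}}$ are \emph{not} in bijection with all components of the all-$A$ and all-$B$ states (state circles lying entirely inside an alternating region never meet $\widetilde{\mathbb{G}}$), so the asserted $f=s_A(D)+s_B(D)$ requires exactly the correction terms --- state circles internal to alternating regions, the crossing count $c(D)$, and the discrepancy between $k(G)$ and $k(D)$ --- whose cancellation is the real content of the proposition, and you defer all of it to ``accounting'' without carrying it out. So the proposal has a genuine gap in the disconnected-graph case and an unexecuted computation in the proposed alternative.
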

\begin{proof}
Each vertex in $\widetilde{\mathbb{G}}$ corresponds to a curve in the alternating decomposition of $D$. Suppose a collection of curves $\{\gamma_{i_1},\dots,\gamma_{i_j}\}$ bound an alternating region $R$ in the alternating decomposition of $D$, and let $v_{i_1},\dots,v_{i_j}$ be their corresponding vertices in $\widetilde{\mathbb{G}}$. The region $R$ is a surface of genus zero with $j$ boundary components. The vertices $v_{i_1},\dots,v_{i_j}$ all lie in different components $\widetilde{\mathbb{G}}_{i_1},\dots, \widetilde{\mathbb{G}}_{i_j}$ of $\widetilde{\mathbb{G}}$. Consider the vertices $v_{i_1},\dots v_{i_j}$ as disks. Form the connected sum $\widetilde{\mathbb{G}}_{i_1}\#\cdots\#\widetilde{\mathbb{G}}_{i_j}$ by identifying disks inside of vertices $v_{i_1},\dots,v_{i_j}$. What was a collection of $j$ disks is now a single planar surface with $j$ boundary components, just like $R$. Repeat this process for each collection of curves that bound an alternating region to form the surface $\Sigma$.

We partially construct the Turaev surface $F(D)$ as follows. Consider $D$ as embedded on a sphere $S$ sitting inside of $S^3$. Replace crossings of $D$ with round disks, and replace all edges of $D$ with either flat or twisted bands according to whether the edge is alternating or non-alternating. The boundary components of the resulting surface correspond to the union of the all-$A$ and all-$B$ states of $D$. If one such boundary component lies completely in $S$ (i.e. each arc in the component contained in an edge band is contained in a flat edge band), then cap that boundary component off with a disk as follows. If the boundary component corresponds to a component of the all-$B$ state, the interior of the disk should be contained inside $S$, and if the boundary component corresponds to a component of the all-$A$ state, the interior of the disk should be contained outside $S$. The resulting surface is $\Sigma$, and so $g(\widetilde{\mathbb{G}})=g(\Sigma)=g_T(D)$.
\end{proof}

Proposition \ref{prop:twisted} implies that the genus of the Turaev surface of $D$ is determined by the sphere embedding $\mathbb{G}$ of its alternating decomposition graph $G$. Hence we define $g_T(\mathbb{G})$ to be $g_T(D)$ for any diagram $D$ with sphere embedding $\mathbb{G}$ of its alternating decomposition graph $G$. We give a recursive algorithm to compute $g_T(\mathbb{G})$ without referring to the link diagram $D$. Our recurrence depends on the following lemma.

\begin{lemma}
\label{lemma:degree2}
Let $\mathbb{G}$ be a sphere embedding of a connected, alternating decomposition graph $G$, and suppose the number of edges in $G$ is nonzero.
\begin{enumerate}
\item Either $\mathbb{G}$ contains a face bounded by exactly two edges or $\mathbb{G}$ contains at least four vertices of degree two.
\item Either $G$ contains a pair of parallel edges or $G$ contains at least four vertices of degree two.
\end{enumerate}
\end{lemma}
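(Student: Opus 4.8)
The plan is to prove part (1) first and then derive part (2) as a corollary, since a face bounded by exactly two edges of $\mathbb{G}$ corresponds to a pair of parallel edges in $G$ (the two edges share both endpoints). For part (1), I would argue by an Euler-characteristic count on a single sphere in which the connected graph $\mathbb{G}$ is cellularly embedded. Recall the two structural constraints established in the excerpt: every vertex of $G$ has even degree (each $\gamma_i$ encloses a tangle), and every face of $\mathbb{G}$ has an even number of edges on its boundary (the edges alternate in sign around each face, so there are at least two). Write $V$, $E$, $F$ for the numbers of vertices, edges, and faces of $\mathbb{G}$, so that $V - E + F = 2$. Since the graph is nonempty, $E \geq 1$; and because $G$ is bipartite it has no loops, and since each vertex has even degree there are no degree-zero or degree-one vertices, so in fact every vertex has degree $\geq 2$.

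First I would suppose, for contradiction, that $\mathbb{G}$ has \emph{no} face bounded by exactly two edges and \emph{at most three} vertices of degree two. Then every face has at least four edges on its boundary (using that face-lengths are even and, by hypothesis, not equal to $2$), which gives $2E = \sum_{\text{faces}} (\text{length}) \geq 4F$, i.e. $F \leq E/2$. On the vertex side, let $V_2$ be the number of degree-two vertices; by assumption $V_2 \leq 3$, and every other vertex has degree $\geq 4$ (degrees are even), so $2E = \sum_{\text{vertices}} \deg(v) \geq 2V_2 + 4(V - V_2) = 4V - 2V_2 \geq 4V - 6$, i.e. $V \leq (E+3)/2$. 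Plugging both bounds into Euler's formula gives $2 = V - E + F \leq \tfrac{E+3}{2} - E + \tfrac{E}{2} = \tfrac{3}{2}$, a contradiction. Hence either there is a $2$-edge face or there are at least four degree-two vertices, proving (1). (One should double-check the degenerate small cases — e.g. $G$ a single pair of parallel edges between two vertices, $\mathbb{G} = C_2^2$ itself, the "theta-graph," etc. — but in each of these the conclusion holds directly: $C_2^2$ literally has a bigon face, and two parallel edges between two degree-two vertices give four... wait, only two degree-two vertices, but they also bound a bigon, so the disjunction still holds.)

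For part (2): if $\mathbb{G}$ contains a face bounded by exactly two edges $e_1, e_2$, then $e_1$ and $e_2$ have the same two endpoints (they cobound a disk whose boundary meets only these two edges), so $G$ contains a pair of parallel edges; if instead $\mathbb{G}$ has at least four vertices of degree two, then $G$ does as well, since the vertex set and all incidences are unchanged by the embedding. Either way the conclusion of (2) follows from (1).

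The main obstacle I anticipate is not the counting argument itself — that is routine once the two parity facts are in hand — but rather being careful that $\mathbb{G}$ is genuinely a \emph{cellular} embedding on a single sphere, so that Euler's formula applies with the stated $F$; the hypothesis that $G$ is connected is exactly what guarantees this (the excerpt notes each component of $G$ is cellularly embedded on its own sphere). A secondary point requiring care is the claim "face-length even and $\neq 2$ implies $\geq 4$," which is immediate, and the symmetric claim "vertex-degree even and $\geq 2$; if not $=2$ then $\geq 4$," equally immediate — but one must make sure no vertex has degree $0$, which follows since $G$ is connected with $E \geq 1$, and no loops or... actually degree could a priori be odd were it not for the enclosed-tangle argument, so I would cite that explicitly.
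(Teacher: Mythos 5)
Your proof is correct and follows essentially the same route as the paper: assume no bigon face and at most three degree-two vertices, use the handshaking lemma on vertices (even degrees, so the rest have degree $\geq 4$) and on faces (bipartite, so even face-lengths $\geq 4$), and derive the contradiction $2 \leq \tfrac{3}{2}$ from Euler's formula, with part (2) following immediately from part (1). No gaps; your extra care about loops, isolated vertices, and cellularity only makes explicit points the paper leaves implicit.
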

\begin{proof}
The degree of a face is defined to be the number of edges in its boundary. Suppose that $\mathbb{G}$ has no face of degree two and three or fewer vertices of degree two. Since every vertex in $\mathbb{G}$ has even degree, it follows that the other vertices of $\mathbb{G}$ have degree at least four. Let $v(\mathbb{G})$, $e(\mathbb{G})$, and $f(\mathbb{G})$ denote the number of vertices, edges, and faces of $\mathbb{G}$ respectively. Also, let $\mathcal{V}(\mathbb{G})$ and $\mathcal{F}(\mathbb{G})$ be the vertex and face sets of $\mathbb{G}$. The handshaking lemma implies
$$4(v(\mathbb{G})-3) + 6 = 4v(\mathbb{G}) - 6\leq \sum_{v\in \mathcal{V}(\mathbb{G})} \deg v = 2e(\mathbb{G}).$$
Thus $v(\mathbb{G})\leq \frac{1}{2} e(\mathbb{G}) + \frac{3}{2}$. Since $\mathbb{G}$ is bipartite, all of its faces have even degree, and since $\mathbb{G}$ has no face of degree two, the handshaking lemma applied to the planar dual of $\mathbb{G}$ implies 
$$4f(\mathbb{G}) \leq \sum_{f\in \mathcal{F}(\mathbb{G})} \deg f = 2e(\mathbb{G}).$$
Thus $f(\mathbb{G})\leq \frac{1}{2} e(\mathbb{G}).$ Now since $\mathbb{G}$ is connected and planar, its Euler characteristic is two. Therefore, we have
$$2=v(\mathbb{G}) - e(\mathbb{G}) + f(\mathbb{G}) \leq \frac{1}{2} e(\mathbb{G}) +\frac{3}{2} - e(\mathbb{G}) + \frac{1}{2} e(\mathbb{G}) = \frac{3}{2},$$
which is a contradiction. Therefore $\mathbb{G}$ must have at least four vertices of degree two. The second statement follows immediately from the first.
\end{proof}

For any graph $\Gamma$ (or oriented ribbon graph), let $k(\Gamma)$ denote the number of connected components in $\Gamma$. If $e$ is an edge in $\Gamma$ incident to vertices $v_1$ and $v_2$, then the contraction of $e$, denoted $\Gamma / e$ is the graph obtained by identifying the vertices $v_1$ and $v_2$ and deleting the edge $e$. Any graph that can be obtained from $\Gamma$ via a sequence of edge contractions and edge or vertex deletions is called a {\em minor} of $\Gamma$. The sphere embedding of a graph induces a sphere embedding on any of its minors. If $\Gamma$ is bipartite, then $\Gamma  - e$ is also bipartite. If $\Gamma$ is bipartite and $k(\Gamma) = k(\Gamma-e) -1$, then $\Gamma / e$ is also bipartite. In the following proposition, whenever a set of edges is deleted or contracted, the induced sphere embedding on the subgraph is assumed. Proposition \ref{prop:PlaneRecursive} gives a recursive algorithm to compute $g_T(\mathbb{G})$.
\begin{proposition}
\label{prop:PlaneRecursive}
Let $\mathbb{G}$ be a sphere embedding of an alternating decomposition graph $G$.
\begin{enumerate}
\item If $\mathbb{G}$ is a collection of isolated vertices, then $g_T(\mathbb{G})=0$.
\item Suppose that $\mathbb{G}$ contains a face bounded by exactly two edges $e_1$ and $e_2$. Let $\mathbb{G}' = \mathbb{G} - \{e_1,e_2\}$, and let $\mathbb{G}''= \mathbb{G}/\{e_1,e_2\}$. If $k(\mathbb{G}') = k(\mathbb{G})$, then $\mathbb{G}'$ is a sphere embedding of an alternating decomposition graph and $g_T(\mathbb{G}') = g_T(\mathbb{G})-1$. If $k(\mathbb{G}') = k(\mathbb{G})+1$, then both $\mathbb{G}'$ and $\mathbb{G}''$ are sphere embeddings of alternating decomposition graphs and $g_T(\mathbb{G}') = g_T(\mathbb{G}'') = g_T(\mathbb{G})$.
\item Suppose that $\mathbb{G}$ contains a vertex $v$ of degree two,  incident to edges $e_1$ and $e_2$. Let $\mathbb{G}'=\mathbb{G}/\{e_1,e_2\}$. Then $\mathbb{G}'$ is a sphere embedding of an alternating decomposition graph, and $g_T(\mathbb{G}') = g_T(\mathbb{G})$.
\end{enumerate}
\end{proposition}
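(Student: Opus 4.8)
The plan is to deduce all three items from Proposition \ref{prop:twisted}, which identifies $g_T(\mathbb{G})$ with the genus of the twisted embedding $\widetilde{\mathbb{G}}$ as an oriented ribbon graph. Item (1) is immediate: if $\mathbb{G}$ is a collection of isolated vertices, then $\widetilde{\mathbb{G}}$ is a disjoint union of disks, which has genus zero. For items (2) and (3), the strategy is to compute the genus of $\widetilde{\mathbb{G}}$ from its Euler characteristic via the ribbon-graph formula $2k(\widetilde{\mathbb{G}}) - 2g(\widetilde{\mathbb{G}}) = v - e + f$, where $v$, $e$ are the numbers of vertices and edges, and $f$ is the number of boundary components of the ribbon surface (equivalently, the number of faces of $\widetilde{\mathbb{G}}$); then I would track how $v$, $e$, $f$, and $k$ change under the deletion or contraction operations. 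The preservation of the ``alternating decomposition graph'' property in each case should be checked using the remarks immediately preceding the proposition: deleting an edge keeps the graph bipartite and planar and keeps all degrees even only if the two deleted edges share both endpoints (which they do, bounding a bigon), and contracting a pair of parallel edges between distinct components keeps the graph bipartite by the stated criterion $k(\Gamma) = k(\Gamma - e) - 1$.

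For item (2), suppose $e_1, e_2$ bound a bigon face in $\mathbb{G}$, hence are parallel edges between two vertices $u$ and $w$. In the twisted embedding, the two edge-bands from $u$ to $w$, each carrying a half-twist, together with the two vertex-disks, form a connected ribbon subgraph; the key local computation is to determine its boundary structure. If deleting $\{e_1, e_2\}$ does not disconnect (i.e. $k(\mathbb{G}') = k(\mathbb{G})$), then $u$ and $w$ lie in a common component of $\widetilde{\mathbb{G}}$ both before and after; removing the two bands decreases $e$ by $2$, leaves $v$ and $k$ unchanged, and I expect the boundary-component count $f$ to \emph{increase} by $2$ — this is the crux, and I would verify it by drawing the two half-twisted bands between $u$ and $w$ and tracing the boundary curves, exactly as in the local pictures of Figure \ref{figure:twisted}. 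Plugging into the Euler-characteristic formula gives $g(\widetilde{\mathbb{G}'}) = g(\widetilde{\mathbb{G}}) - 1$. If instead $k(\mathbb{G}') = k(\mathbb{G}) + 1$, then $u$ and $w$ are joined \emph{only} by $e_1, e_2$; deleting the bands splits one component into two, so $v$ fixed, $e \to e - 2$, $k \to k + 1$, and now $f$ stays the same (the two bigon-boundary arcs become the relevant boundary of the two new pieces), whence $g$ is unchanged. For $\mathbb{G}''$, contracting the parallel pair $e_1, e_2$ identifies $u$ and $w$ into a single vertex and deletes both edges; here $v \to v - 1$, $e \to e - 2$, $k \to k$ (contraction across a bridging pair between two components merges them back), and I expect $f$ to decrease by $1$, so again $g$ is unchanged. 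Each of these bookkeeping computations is routine once the effect on $f$ is pinned down locally.

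For item (3), let $v$ be a degree-two vertex incident to $e_1$ and $e_2$, with other endpoints $u_1$ and $u_2$ (possibly equal). In $\widetilde{\mathbb{G}}$, the vertex-disk for $v$ with its two attached half-twisted bands is a band with two half-twists, i.e. a band with a full twist, which is orientation-preserving and isotopic in the ribbon-graph sense to an untwisted band joining $u_1$ and $u_2$; contracting $\{e_1, e_2\}$ realizes exactly this simplification. Concretely $v \to v - 1$, $e \to e - 2$, and I expect $f$ and $k$ to be unchanged (the two boundary arcs running along the $v$-disk simply get shortened), so the Euler-characteristic formula forces $g(\widetilde{\mathbb{G}'}) = g(\widetilde{\mathbb{G}})$. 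Throughout, one must note that the sphere embedding is inherited by the minor (as remarked before the proposition), so ``$\mathbb{G}'$ (or $\mathbb{G}''$) is a sphere embedding of an alternating decomposition graph'' is a legitimate statement. The main obstacle in the whole argument is the local boundary-tracing in item (2): correctly determining that the bigon deletion raises $f$ by $2$ in the connected case but leaves $f$ fixed in the disconnecting case, and dually that the parallel contraction lowers $f$ by $1$. I would settle this by an explicit small model — two disks joined by two half-twisted bands — enumerating its boundary circles before and after each move, and then invoking the fact that these moves are performed in a neighborhood disjoint from the rest of the ribbon graph so the global count changes by exactly the local difference.
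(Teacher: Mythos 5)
Your overall strategy---reduce everything to Proposition \ref{prop:twisted} and do Euler-characteristic bookkeeping on $v$, $e$, $f$, $k$ for the twisted embedding---is exactly the paper's, but your decisive local counts are wrong, and the one you yourself flag as the crux is the one that fails. In item (2), in the non-disconnecting case you claim $f$ increases by $2$; in fact deleting the two half-twisted bands of a bigon leaves $f$ unchanged in \emph{both} sub-cases. (Trace the boundary: the four band arcs of $e_1,e_2$ together with the two free bigon arcs of $\partial u$ and $\partial w$ are re-closed, after deletion, by the four newly exposed attachment arcs into the same number of circles, whatever the external pairing of the four outer corners is; concretely, for $C_2^2$ one has $f=2$ both before and after deleting a bigon pair.) Note also that your claimed change $f\mapsto f+2$, combined with $e\mapsto e-2$ and $v,k$ fixed, gives $g'=g-2$ from your own formula $2k-2g=v-e+f$, not the $g-1$ you then assert---so the crux claim is both incorrect and inconsistent with the conclusion you draw from it. The correct statement, and the one the paper uses, is $f(\widetilde{\mathbb{G}}')=f(\widetilde{\mathbb{G}})$ always, with the genus dichotomy coming entirely from whether $k$ increases.

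Item (3) has a similar bookkeeping error: contracting both edges at the degree-two vertex merges three vertices into one, so if the two neighbors $u_1,u_2$ are distinct one gets $v\mapsto v-2$, $e\mapsto e-2$, with $f$ and $k$ fixed, while if $u_1=u_2$ one gets $v\mapsto v-1$, $e\mapsto e-2$, $f\mapsto f-1$. Your stated count ($v\mapsto v-1$, $e\mapsto e-2$, $f$ and $k$ fixed) yields a half-integer genus change and matches neither case, and you never treat $u_1=u_2$ at all; the geometric intuition that two half-twisted bands through a degree-two vertex amount to one untwisted band is sound, but it describes a different operation ($e\mapsto e-1$) than the stated contraction, and you have conflated the two. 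By contrast, your handling of $\mathbb{G}''$ in the disconnecting sub-case of (2) ($v\mapsto v-1$, $f\mapsto f-1$) is consistent and acceptable, where the paper instead observes that $\widetilde{\mathbb{G}}''$ is a connected sum of $\widetilde{\mathbb{G}}'$ along the two vertices incident to $e_1$ and $e_2$. So the proof as written does not go through: the ``routine'' local boundary traces must actually be carried out, and they produce the paper's counts, not yours.
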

\begin{proof}
(1) Let $D$ be the disjoint union of $m$ alternating diagrams. Then $g_T(D)=0$ and $\mathbb{G}$ is $m$ isolated vertices. Thus $g_T(\mathbb{G})=0$.

(2) Deleting or contracting two edges from a graph embedded on a disjoint union of spheres results in a graph embedded on a disjoint union of spheres. Moreover, since $e_1$ and $e_2$ bound a face, they are incident to the same two vertices. Hence all vertices of $\mathbb{G}'$ and $\mathbb{G}''$ have even degree. Since $\mathbb{G}'$  is obtained from $\mathbb{G}$ by deleting two edges, it follows that $\mathbb{G}'$ is bipartite. Also, since $e_1$ and $e_2$ are parallel, it follows that if the deletion of $e_1$ and $e_2$ increases the number of components in $\mathbb{G}$, then $\mathbb{G}''$ is bipartite. Thus $\mathbb{G}'$ is a sphere embedding of an alternating decomposition graph, and if $k(\mathbb{G}') = k(\mathbb{G})+1$, then $\mathbb{G}''$ is a sphere embedding of an alternating decomposition graph. 

Let $\widetilde{\mathbb{G}}$, $\widetilde{\mathbb{G}}'$, and $\widetilde{\mathbb{G}}''$ be the twisted embeddings of $\mathbb{G}$, $\mathbb{G}'$, and $\mathbb{G}''$ respectively. Define $f(\widetilde{\mathbb{G}})$ to be the number of components of $\Sigma - \widetilde{\mathbb{G}}$ where $\Sigma$ is the surface on which $\mathbb{G}$ is embedded. Note that $f(\widetilde{\mathbb{G}})$ is also the number of boundary components of $\widetilde{\mathbb{G}}$. Similarly define $f(\widetilde{\mathbb{G}}')$ and $f(\widetilde{\mathbb{G}}'')$.

We have $v(\widetilde{\mathbb{G}}')=v(\widetilde{\mathbb{G}})$, $e(\widetilde{\mathbb{G}}') = e(\widetilde{\mathbb{G}})-2$, and $f(\widetilde{\mathbb{G}}') = f(\widetilde{\mathbb{G}})$. If $\mathbb{H}$ is an oriented ribbon graph, then its genus is
$$g(\mathbb{H}) =\frac{1}{2}(2k(\mathbb{H}) -v(\mathbb{H}) + e(\mathbb{H}) - f(\mathbb{H})).$$
Both $\mathbb{G}$ and $\widetilde{\mathbb{G}}$ have the same underlying graph $G$, and so they have the same number of components. A similar statement holds for $\mathbb{G}'$ and $\widetilde{\mathbb{G}}'$. If $k(\mathbb{G}') = k(\mathbb{G}) +1$, then $$g_T(\mathbb{G}') = g(\widetilde{\mathbb{G}}') = g(\widetilde{\mathbb{G}})=g_T(\mathbb{G}),$$ and
if $k(\mathbb{G}') = k(\mathbb{G})$, then
$$g_T(\mathbb{G}') = g(\widetilde{\mathbb{G}}') = g(\widetilde{\mathbb{G}})-1=g_T(\mathbb{G})-1.$$
Also, if $k(\mathbb{G}') = k(\mathbb{G})+1$, then $\widetilde{\mathbb{G}}''$ can be obtained from $\widetilde{\mathbb{G}}'$ by taking a connected sum along the two vertices incident with $e_1$ and $e_2$ in $\widetilde{\mathbb{G}}$. Hence $g_T(\mathbb{G}'') = g_T(\mathbb{G}')$.

(3) As in the previous case, contracting two edges from a graph embedded on a disjoint union of spheres leads to a graph embedded on a disjoint union of spheres. Let $v_1$ and $v_2$ be the two vertices adjacent to $v$, and let $v_{12}$ be the vertex in $\mathbb{G}'$ corresponding to vertices $v_1$ and $v_2$ in $\mathbb{G}$.  If $v_1\neq v_2$, then the degree of $v_{12}$ is $\deg v_1 + \deg v_2 - 2$, which is even. If $v_1=v_2$, then $\deg v_{12} = \deg v_1 - 2$, which is also even. All other vertices in $\mathbb{G}'$ have the same degree as their corresponding vertices in $\mathbb{G}$. Also, the bipartition of the vertices of $\mathbb{G}$ induces a bipartition of the vertices of $\mathbb{G}'$. Thus $\mathbb{G}'$ is a sphere embedding of an alternating decomposition graph.

 Let $\widetilde{\mathbb{G}}$ and $\widetilde{\mathbb{G}}'$ be the twisted embeddings associated to $\mathbb{G}$ and $\mathbb{G}'$, respectively. Then $k(\widetilde{\mathbb{G'}})=k(\widetilde{\mathbb{G}})$ and $e(\widetilde{\mathbb{G}}')=e(\widetilde{\mathbb{G}})-2$. If $v_1\neq v_2$, then
  $v(\widetilde{\mathbb{G'}})= v(\widetilde{\mathbb{G}})-2$ and$f(\widetilde{\mathbb{G}}')=f(\widetilde{\mathbb{G}})$, and if $v_1=v_2$, then 
  $v(\widetilde{\mathbb{G'}})= v(\widetilde{\mathbb{G}})-1$ and 
  $f(\widetilde{\mathbb{G}}') = f(\widetilde{\mathbb{G}})-1$. Hence
$ g_T(\mathbb{G}') =  g_T(\mathbb{G}).$
\end{proof}

As the following theorem shows, the Turaev genus of the sphere embedding $\mathbb{G}$ of the alternating decomposition graph $G$ does not depend on its embedding at all.
\begin{theorem}
\label{thm:EmbeddingIndependent}
Let $\mathbb{G}_1$ and $\mathbb{G}_2$ be sphere embeddings of the same alternating decomposition graph $G$. Then $g_T(\mathbb{G}_1) = g_T(\mathbb{G}_2)$.
\end{theorem}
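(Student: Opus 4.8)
The plan is to reduce both embeddings to a common ``base case'' using the recursive structure already established in Proposition~\ref{prop:PlaneRecursive}, and to verify that the recursive reductions can be carried out in an embedding-independent way. First I would observe that Lemma~\ref{lemma:degree2}(2) is a purely graph-theoretic statement: as long as $G$ has at least one edge, either $G$ has a pair of parallel edges or $G$ has at least four vertices of degree two. In either situation Proposition~\ref{prop:PlaneRecursive} provides a reduction move (deleting a parallel pair, or contracting the two edges at a degree-two vertex) whose effect on $g_T$ depends only on the abstract graph $G$ and on how the number of connected components changes --- both of which are independent of the chosen sphere embedding. So the strategy is: show that the \emph{multiset of graphs} obtained by iterating these moves until no edges remain is independent of the embedding, and that the bookkeeping ($-1$ versus $0$ at each step, summed over components) therefore yields the same value of $g_T(\mathbb{G}_1)$ and $g_T(\mathbb{G}_2)$.

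In more detail, I would argue by induction on $e(G)$. If $e(G)=0$ then both $\mathbb{G}_1$ and $\mathbb{G}_2$ are collections of isolated vertices and $g_T=0$ by Proposition~\ref{prop:PlaneRecursive}(1), independent of embedding. If $e(G)>0$, pick a reduction move dictated by Lemma~\ref{lemma:degree2}(2) applied to the abstract graph $G$: say $G$ has parallel edges $e_1,e_2$, or a degree-two vertex $v$ incident to $e_1,e_2$. The resulting smaller graph $G'$ (and possibly $G''$) is the same abstract alternating decomposition graph regardless of embedding, and $e(G') < e(G)$. By the inductive hypothesis, $g_T$ of any sphere embedding of $G'$ (resp.\ $G''$) is a well-defined number. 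Now I must check that a \emph{given} sphere embedding $\mathbb{G}$ of $G$ can always be reduced via a move compatible with the chosen abstract move: if $e_1,e_2$ are parallel in $G$ they need not bound a face in $\mathbb{G}$, so here one uses the other horn of Lemma~\ref{lemma:degree2}(1) --- in $\mathbb{G}$ there must be a face of degree two or four vertices of degree two, and one applies whichever clause of Proposition~\ref{prop:PlaneRecursive} is available. The key point is that the \emph{value} $g_T(\mathbb{G})$ can be computed by \emph{any} valid sequence of moves from Proposition~\ref{prop:PlaneRecursive}, since each such sequence terminates at isolated vertices with $g_T=0$, and each step's contribution to $g_T$ is determined (it equals $1$ precisely when the deletion of a parallel face-pair does not disconnect, and $0$ otherwise). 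Thus $g_T(\mathbb{G}_1)$ and $g_T(\mathbb{G}_2)$ are both equal to the same sum.

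A cleaner way to package this, which I would prefer if the details above get tangled, is to use Proposition~\ref{prop:twisted} directly: $g_T(\mathbb{G}_i) = g(\widetilde{\mathbb{G}}_i)$, and by the Euler-characteristic formula $g(\widetilde{\mathbb{H}}) = \tfrac12\bigl(2k(\widetilde{\mathbb{H}}) - v(\widetilde{\mathbb{H}}) + e(\widetilde{\mathbb{H}}) - f(\widetilde{\mathbb{H}})\bigr)$. Here $k$, $v$, $e$ are determined by the abstract graph $G$, so the only embedding-dependent quantity is $f(\widetilde{\mathbb{G}}_i)$, the number of boundary components of the twisted ribbon graph. The task then reduces to proving that $f(\widetilde{\mathbb{G}}_1) = f(\widetilde{\mathbb{G}}_2)$. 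This can again be done by the same induction: the reduction moves of Proposition~\ref{prop:PlaneRecursive} change $f(\widetilde{\mathbb{G}})$ in a way recorded in its proof (unchanged for the parallel-pair deletion and the degree-two contraction when $v_1\neq v_2$; decreased by one when $v_1=v_2$), and these changes are dictated by the abstract graph. I expect the main obstacle to be exactly the mismatch between ``parallel in $G$'' and ``bounds a face in $\mathbb{G}$'': one must be careful that the reduction available in a specific embedding is always \emph{some} move of Proposition~\ref{prop:PlaneRecursive} (guaranteed by Lemma~\ref{lemma:degree2}(1)) and that its numerical effect is pinned down by abstract data, so that iterating to the empty-edge base case gives an embedding-independent total. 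Once that compatibility is nailed down, the induction closes and $g_T(\mathbb{G}_1) = g_T(\mathbb{G}_2)$ follows.
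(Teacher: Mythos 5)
Your overall strategy is the paper's: induct on $e(G)$, handle degree-two vertices by Proposition~\ref{prop:PlaneRecursive}(3) (whose effect is manifestly embedding-independent), and otherwise remove a parallel pair, with the genus tracked through the twisted embedding via $g=\tfrac12(2k-v+e-f)$. But the crux you flag at the end --- the mismatch between ``parallel in $G$'' and ``bounds a face of degree two in $\mathbb{G}$'' --- is exactly the part you never resolve, and it is the only nontrivial content of the theorem. Your first formulation asserts that $g_T(\mathbb{G})$ ``can be computed by any valid sequence of moves'' with each step's contribution ``pinned down by abstract data''; that is essentially a confluence claim equivalent to the theorem itself, and it does not follow from Proposition~\ref{prop:PlaneRecursive}, whose clause (2) only applies when the two edges bound a face of the given embedding. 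Moreover, if you let each embedding reduce by ``whichever clause is available,'' the two reductions may delete different edge pairs, the resulting abstract graphs differ, and the induction on a common underlying graph no longer closes.

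What is missing is the following statement, which the paper proves directly: if $e_1,e_2$ are parallel in $G$ but do not bound a bigon in $\mathbb{G}_2$, then deleting them from the twisted embedding still leaves the number of boundary components unchanged, $f(\widetilde{\mathbb{G}}_2-\{e_1,e_2\})=f(\widetilde{\mathbb{G}}_2)$, so the deletion changes the genus by exactly $1$ or $0$ according to whether it preserves or increases the number of components --- the same rule as for a face-bounding pair. Proving this requires an argument about how the four boundary arcs of the two twisted edge bands sit in the boundary circles of $\widetilde{\mathbb{G}}_2$: planarity of $\mathbb{G}_2$ rules out most adjacencies, and the fact that the two boundary arcs of a single twisted edge lie on distinct boundary components (one in the all-$A$ state, one in the all-$B$ state) rules out another, leaving two configurations in which the four arcs lie on exactly two boundary circles that are simply rearranged into two circles after deletion (Figure~\ref{figure:parallel}). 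With that lemma in hand, one deletes the \emph{same} pair $e_1,e_2$ (chosen to bound a face in $\mathbb{G}_1$ via Lemma~\ref{lemma:degree2}) from both embeddings, applies the inductive hypothesis to the common underlying graph $G-\{e_1,e_2\}$, and concludes. Without it, your induction has a hole precisely at the step you yourself identified as the main obstacle.
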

\begin{proof} We proceed by induction on the number of edges in $G$. If $G$ has no edges, then both $\mathbb{G}_1$ and $\mathbb{G}_2$ are embeddings of a disjoint union of vertices. Hence $g_T(\mathbb{G}_1) = g_T(\mathbb{G}_2) = 0$. 

Suppose that $G$ has $n$ edges and that any two embeddings of an alternating decomposition graph with fewer than $n$ edges have the same Turaev genus. Suppose that $\mathbb{G}_1$ has a vertex $v$ of degree two incident to edges $e_1$ and $e_2$. Since $\mathbb{G}_2$ has the same underlying graph $G$ as $\mathbb{G}_1$, the same statement holds for $\mathbb{G}_2$, that is the vertex $v$ in $\mathbb{G}_2$ has degree two and is incident to edges $e_1$ and $e_2$. Set $\mathbb{G}_1' = \mathbb{G}_1/\{e_1,e_2\}$, $\mathbb{G}_2' =\mathbb{G}_2' / \{e_1,e_2\}$, and $G'=G/\{e_1,e_2\}$. By Proposition \ref{prop:PlaneRecursive}, we have that $g_T(\mathbb{G}_1') = g_T(\mathbb{G}_1)$ and $g_T(\mathbb{G}_2') = g_T(\mathbb{G}_2)$. Since $\mathbb{G}_1'$ and $\mathbb{G}_2'$ are sphere embeddings of the same graph $G'$, the inductive hypothesis implies that $g_T(\mathbb{G}_1') = g_T(\mathbb{G}_2')$. Therefore $g_T(\mathbb{G}_1) = g_T(\mathbb{G}_2)$.

Now suppose that $\mathbb{G}_1$ does not have a vertex of degree two. By Lemma \ref{lemma:degree2}, $\mathbb{G}_1$ has a face bounded by exactly two edges, say $e_1$ and $e_2$. Let $\mathbb{G}_1' = \mathbb{G}_1 -\{e_1,e_2\}$. Then Proposition \ref{prop:PlaneRecursive} implies that if $k(\mathbb{G}_1')=k(\mathbb{G}_1)$, then $g_T(\mathbb{G}_1) = g_T(\mathbb{G}_1')+1$, and if $k(\mathbb{G}_1') = k(\mathbb{G}_1) + 1$, then $g_T(\mathbb{G}_1) = g_T(\mathbb{G}_1')$. Since $\mathbb{G}_1$ and $\mathbb{G}_2$ have the same underlying graph $G$, the edges $e_1$ and $e_2$ are parallel in $\mathbb{G}_2$, but do not necessarily bound a face of degree two. Let $\mathbb{G}_2' = \mathbb{G}_2 - \{e_1,e_2\}$. 

The twisted embedding $\widetilde{\mathbb{G}}_2$ is obtained from $\widetilde{\mathbb{G}}_2'$ by adding the two twisted edges corresponding to $e_1$ and $e_2$. The twisted edges $e_1$ and $e_2$ contain four boundary arcs that are pieces of boundary components of $\widetilde{\mathbb{G}}_2$. Fix one of the boundary arcs and fix an endpoint of that boundary arc. As one travels along the boundary of $\widetilde{\mathbb{G}}_2$ starting from the fixed endpoint, one of the other seven endpoints of boundary arcs of $e_1$ and $e_2$ must be encountered first. The planarity of $\mathbb{G}_2$ lets us rule out four of those endpoints. Furthermore, each edge in $\mathbb{G}_2$ corresponds to a non-alternating edge in some link diagram $D$. The two boundary arcs of that edge correspond to a segment in a component of the all-$A$ state of $D$ and a segment in a component of the all-$B$ state of $D$. In particular, two boundary arcs of the same edge must belong to different components of the boundary of the twisted embedding of the associated alternating decomposition graph. This rules out one more of the endpoints as being the next endpoint encountered. There are two remaining cases, each depicted in Figure \ref{figure:parallel}.

The four boundary arcs of $e_1$ and $e_2$ lie in exactly two components of the boundary of $\widetilde{\mathbb{G}}_2$. Moreover, if the twisted edges $e_1$ and $e_2$ are removed, then the two boundary components containing boundary arcs of $e_1$ and $e_2$ are transformed into two boundary components of the twisted embedding $\widetilde{\mathbb{G}}_2'$. Since no other boundary components of $\widetilde{\mathbb{G}}_2$ are changed by deleting $e_1$ and $e_2$, it follows that $f(\widetilde{\mathbb{G}}_2') = f(\widetilde{\mathbb{G}}_2)$. Since $v(\widetilde{\mathbb{G}}_2')=v(\widetilde{\mathbb{G}}_2)$ and $e(\widetilde{\mathbb{G}}_2') = e(\widetilde{\mathbb{G}}_2) - 2$, it follows that if $k(\mathbb{G}_2') = k(\mathbb{G}_2)$, then $g_T(\mathbb{G}_2)=g_T(\mathbb{G}_2')+1$, and if $k(\mathbb{G}_2') = k(\mathbb{G}_2)+1$, then $g_T(\mathbb{G}_2)=g_T(\mathbb{G}_2')$. The embedded graphs $\mathbb{G}_1'$ and $\mathbb{G}_2'$ have the same underlying graph, and hence the inductive hypothesis implies that $g_T(\mathbb{G}_1')=g_T(\mathbb{G}_2')$.
Deleting $e_1$ and $e_2$ from $\mathbb{G}_1$ increases the number of components if and only if deleting $e_1$ and $e_2$ from $\mathbb{G}_2$ increases the number of components. Therefore $g_T(\mathbb{G}_1) = g_T(\mathbb{G}_2)$, and the desired result is proven.
\end{proof}
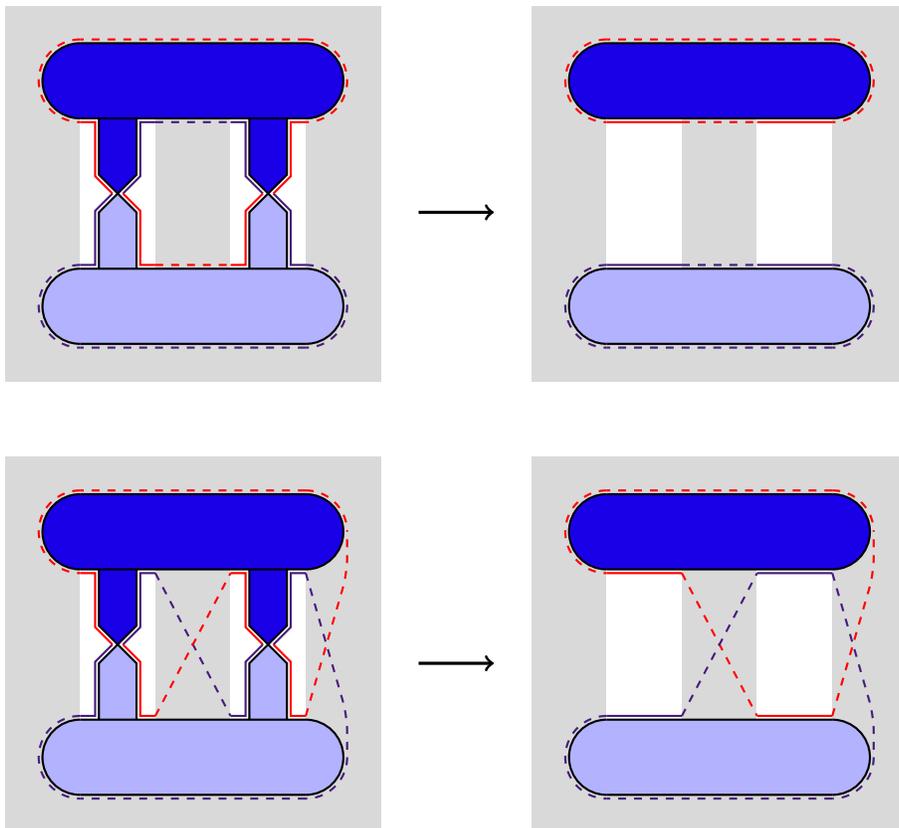
\begin{figure}[h]
$$\begin{tikzpicture}[thick]


\fill [black!15!white] (0,-.5) rectangle (5,4.5);
\fill [white] (1,1) rectangle (2,3);
\fill [white] (4,1) rectangle (3,3);

\fill[blue!30!white] (1,0) arc (270:90:.5cm);
\fill[blue!30!white] (4,0) arc (-90:90:.5cm);
\fill[blue!30!white] (.98,0) rectangle (4.02,1);

\fill [blue!90!red] (1.25,3) -- (1.25,2.25) -- (1.5,2) -- (1.75,2.25) -- (1.75,3);
\fill[blue!30!white] (1.25,1) -- (1.25, 1.75) -- (1.5,2) -- (1.75,1.75) -- (1.75,1);
\draw (1.25,1) -- (1.25,1.75) -- (1.75,2.25) -- (1.75,3);
\draw (1.25,3) -- (1.25, 2.25) -- (1.75, 1.75) -- (1.75,1);

\begin{scope}[xshift = 2cm]
\fill [blue!90!red] (1.25,3) -- (1.25,2.25) -- (1.5,2) -- (1.75,2.25) -- (1.75,3);
\fill[blue!30!white] (1.25,1) -- (1.25, 1.75) -- (1.5,2) -- (1.75,1.75) -- (1.75,1);
\draw (1.25,1) -- (1.25,1.75) -- (1.75,2.25) -- (1.75,3);
\draw (1.25,3) -- (1.25, 2.25) -- (1.75, 1.75) -- (1.75,1);
\end{scope}

\draw (1,0) -- (4,0);
\draw (1,1) -- (4,1);
\draw (1,0) arc (270:90:.5cm);
\draw (4,0) arc (-90:90:.5cm);

\begin{scope}[yshift = 3cm]
\fill[blue!90!red] (1,0) arc (270:90:.5cm);
\fill[blue!90!red] (4,0) arc (-90:90:.5cm);
\fill[blue!90!red] (.98,0) rectangle (4.02,1);
\draw (1,0) -- (4,0);
\draw (1,1) -- (4,1);
\draw (1,0) arc (270:90:.5cm);
\draw (4,0) arc (-90:90:.5cm);
\end{scope}

\draw[lsupurple] (1,1.05) -- (1.2,1.05) -- (1.2,1.77) -- (1.43,2);
\draw[lsupurple] (1.57,2) -- (1.8,2.23) -- (1.8,2.95) -- (2,2.95);
\draw[lsupurple, dashed] (2,2.95) -- (3,2.95);
\draw[lsupurple] (3,2.95) -- (3.2,2.95) -- (3.2, 2.23) -- (3.43,2);
\draw[lsupurple] (3.57,2) -- (3.8,1.77) -- (3.8,1.05) -- (4,1.05);
\draw[lsupurple, dashed] (4,1.05) arc (90:-90: .55cm);
\draw[lsupurple, dashed] (4,-.05) -- (1,-.05);
\draw[lsupurple, dashed] (1,1.05) arc (90:270:.55cm);

\draw[red] (1,2.95) -- (1.2,2.95) -- (1.2, 2.23) -- (1.43,2);
\draw[red] (1.57,2) -- (1.8,1.77) -- (1.8,1.05) -- (2,1.05);
\draw[red] (3,1.05) -- (3.2,1.05) -- (3.2,1.77) -- (3.43,2);
\draw[red] (3.57,2) -- (3.8,2.23) -- (3.8,2.95) -- (4,2.95);
\draw[red, dashed] (2,1.05) -- (3,1.05);
\draw[red, dashed] (4,4.05) -- (1,4.05);
\draw[red, dashed] (4,4.05) arc (90:-90: .55cm);
\draw[red, dashed] (1,4.05) arc (90:270:.55cm);


\begin{scope}[xshift = 7cm]
\fill [black!15!white] (0,-.5) rectangle (5,4.5);
\fill [white] (1,1) rectangle (2,3);
\fill [white] (4,1) rectangle (3,3);

\fill[blue!30!white] (1,0) arc (270:90:.5cm);
\fill[blue!30!white] (4,0) arc (-90:90:.5cm);
\fill[blue!30!white] (.98,0) rectangle (4.02,1);

\draw (1,0) -- (4,0);
\draw (1,1) -- (4,1);
\draw (1,0) arc (270:90:.5cm);
\draw (4,0) arc (-90:90:.5cm);

\begin{scope}[yshift = 3cm]
\fill[blue!90!red] (1,0) arc (270:90:.5cm);
\fill[blue!90!red] (4,0) arc (-90:90:.5cm);
\fill[blue!90!red] (.98,0) rectangle (4.02,1);
\draw (1,0) -- (4,0);
\draw (1,1) -- (4,1);
\draw (1,0) arc (270:90:.5cm);
\draw (4,0) arc (-90:90:.5cm);
\end{scope}

\draw[red, dashed] (4,4.05) -- (1,4.05);
\draw[red, dashed] (4,4.05) arc (90:-90: .55cm);
\draw[red, dashed] (1,4.05) arc (90:270:.55cm);
\draw[red] (1,2.95) -- (2,2.95);
\draw[red, dashed] (2,2.95) -- (3,2.95);
\draw[red] (3,2.95) -- (4,2.95);

\draw[lsupurple, dashed] (4,1.05) arc (90:-90: .55cm);
\draw[lsupurple, dashed] (4,-.05) -- (1,-.05);
\draw[lsupurple, dashed] (1,1.05) arc (90:270:.55cm);
\draw[lsupurple] (1,1.05) -- (2,1.05);
\draw[lsupurple, dashed] (2,1.05) -- (3,1.05);
\draw[lsupurple] (3,1.05) -- (4,1.05);

\end{scope}


\begin{scope}[yshift = -6cm]

\fill [black!15!white] (0,-.5) rectangle (5,4.5);
\fill [white] (1,1) rectangle (2,3);
\fill [white] (4,1) rectangle (3,3);

\fill[blue!30!white] (1,0) arc (270:90:.5cm);
\fill[blue!30!white] (4,0) arc (-90:90:.5cm);
\fill[blue!30!white] (.98,0) rectangle (4.02,1);

\fill [blue!90!red] (1.25,3) -- (1.25,2.25) -- (1.5,2) -- (1.75,2.25) -- (1.75,3);
\fill[blue!30!white] (1.25,1) -- (1.25, 1.75) -- (1.5,2) -- (1.75,1.75) -- (1.75,1);
\draw (1.25,1) -- (1.25,1.75) -- (1.75,2.25) -- (1.75,3);
\draw (1.25,3) -- (1.25, 2.25) -- (1.75, 1.75) -- (1.75,1);

\begin{scope}[xshift = 2cm]
\fill [blue!90!red] (1.25,3) -- (1.25,2.25) -- (1.5,2) -- (1.75,2.25) -- (1.75,3);
\fill[blue!30!white] (1.25,1) -- (1.25, 1.75) -- (1.5,2) -- (1.75,1.75) -- (1.75,1);
\draw (1.25,1) -- (1.25,1.75) -- (1.75,2.25) -- (1.75,3);
\draw (1.25,3) -- (1.25, 2.25) -- (1.75, 1.75) -- (1.75,1);
\end{scope}

\draw (1,0) -- (4,0);
\draw (1,1) -- (4,1);
\draw (1,0) arc (270:90:.5cm);
\draw (4,0) arc (-90:90:.5cm);

\begin{scope}[yshift = 3cm]
\fill[blue!90!red] (1,0) arc (270:90:.5cm);
\fill[blue!90!red] (4,0) arc (-90:90:.5cm);
\fill[blue!90!red] (.98,0) rectangle (4.02,1);
\draw (1,0) -- (4,0);
\draw (1,1) -- (4,1);
\draw (1,0) arc (270:90:.5cm);
\draw (4,0) arc (-90:90:.5cm);
\end{scope}

\draw[lsupurple] (1,1.05) -- (1.2,1.05) -- (1.2,1.77) -- (1.43,2);
\draw[lsupurple] (1.57,2) -- (1.8,2.23) -- (1.8,2.95) -- (2,2.95);
\draw[lsupurple, dashed] (2,2.95) -- (3,1.05);
\draw[red] (3,2.95) -- (3.2,2.95) -- (3.2, 2.23) -- (3.43,2);
\draw[red] (3.57,2) -- (3.8,1.77) -- (3.8,1.05) -- (4,1.05);
\draw[lsupurple, dashed] (4.55,.5) arc (0:-90: .55cm);
\draw[lsupurple, dashed] (4,-.05) -- (1,-.05);
\draw[lsupurple, dashed] (1,1.05) arc (90:270:.55cm);

\draw[red] (1,2.95) -- (1.2,2.95) -- (1.2, 2.23) -- (1.43,2);
\draw[red] (1.57,2) -- (1.8,1.77) -- (1.8,1.05) -- (2,1.05);
\draw[lsupurple] (3,1.05) -- (3.2,1.05) -- (3.2,1.77) -- (3.43,2);
\draw[lsupurple] (3.57,2) -- (3.8,2.23) -- (3.8,2.95) -- (4,2.95);
\draw[red, dashed] (2,1.05) -- (3,2.95);
\draw[red, dashed] (4,4.05) -- (1,4.05);
\draw[red, dashed] (4,4.05) arc (90:0: .55cm);
\draw[red, dashed] (1,4.05) arc (90:270:.55cm);

\draw[red, dashed, rounded corners = 2mm] (4,1.05) -- (4.55,2.95) -- (4.55,3.5);
\draw[lsupurple, dashed, rounded corners = 2mm] (4,2.95) -- (4.55,1.05) -- (4.55,.5);

\end{scope}


\begin{scope}[xshift= 7cm, yshift =-6cm]

\fill [black!15!white] (0,-.5) rectangle (5,4.5);
\fill [white] (1,1) rectangle (2,3);
\fill [white] (4,1) rectangle (3,3);

\fill[blue!30!white] (1,0) arc (270:90:.5cm);
\fill[blue!30!white] (4,0) arc (-90:90:.5cm);
\fill[blue!30!white] (.98,0) rectangle (4.02,1);
\draw (1,0) -- (4,0);
\draw (1,1) -- (4,1);
\draw (1,0) arc (270:90:.5cm);
\draw (4,0) arc (-90:90:.5cm);

\begin{scope}[yshift = 3cm]
\fill[blue!90!red] (1,0) arc (270:90:.5cm);
\fill[blue!90!red] (4,0) arc (-90:90:.5cm);
\fill[blue!90!red] (.98,0) rectangle (4.02,1);
\draw (1,0) -- (4,0);
\draw (1,1) -- (4,1);
\draw (1,0) arc (270:90:.5cm);
\draw (4,0) arc (-90:90:.5cm);
\end{scope}

\draw[lsupurple] (1,1.05) -- (2,1.05);
\draw[red, dashed] (2,2.95) -- (3,1.05);
\draw[lsupurple] (3,2.95) -- (4,2.95);
\draw[red] (3,1.05) -- (4,1.05);
\draw[lsupurple, dashed] (4.55,.5) arc (0:-90: .55cm);
\draw[lsupurple, dashed] (4,-.05) -- (1,-.05);
\draw[lsupurple, dashed] (1,1.05) arc (90:270:.55cm);

\draw[red] (1,2.95) -- (2,2.95);
\draw[lsupurple, dashed] (2,1.05) -- (3,2.95);
\draw[red, dashed] (4,4.05) -- (1,4.05);
\draw[red, dashed] (4,4.05) arc (90:0: .55cm);
\draw[red, dashed] (1,4.05) arc (90:270:.55cm);

\draw[red, dashed, rounded corners = 2mm] (4,1.05) -- (4.55,2.95) -- (4.55,3.5);
\draw[lsupurple, dashed, rounded corners = 2mm] (4,2.95) -- (4.55,1.05) -- (4.55,.5);

\end{scope}

\draw [very thick, ->] (5.5,1.75) -- (6.5,1.75);
\draw [very thick, ->] (5.5, -4.25) -- (6.5,-4.25);

\end{tikzpicture}$$
\caption{The two figures on the left show the boundary components of $\widetilde{\mathbb{G}}_2$ that contain the boundary arcs of $e_1$ and $e_2$, and the two figures on the right show the corresponding boundary components of $\widetilde{\mathbb{G}}_2'$. Other vertices and edges of the graph lie inside the two shaded areas.}
\label{figure:parallel}
\end{figure}

\begin{proof}[Proof of Theorem \ref{thm:graphTuraev}]
Let $D_1$ and $D_2$ be two link diagrams with the same alternating decomposition graph $G$. Let $\mathbb{G}_1$ be the sphere embedding of $G$ induced by $D_1$, and let $\mathbb{G}_2$ be the sphere embedding of $G$ induced by $D_2$. Theorem \ref{thm:EmbeddingIndependent} implies that $g_T(D_1) = g_T(\mathbb{G}_1)=g_T(\mathbb{G}_2)=g_T(D_2)$, as desired.
\end{proof}

Since the Turaev genus of an alternating decomposition graph $G$ does not depend on the sphere embedding of $G$, we can define $g_T(G)$ to be $g_T(D)$ where $D$ is any link diagram with alternating decomposition graph $G$. The recursive algorithm in Proposition \ref{prop:PlaneRecursive} can be restated without reference to embedding.
\begin{corollary}
\label{cor:TuraevAlgorithm}
Let $G$ be an alternating decomposition graph.
\begin{enumerate}
\item If $G$ is a collection of isolated vertices, then $g_T(G)=0$.
\item Suppose that $G$ contains a set of parallel edges $\{e_1,e_2\}$. Let $G'=G-\{e_1,e_2\}$ and let $G''=G/\{e_1,e_2\}$. If $k(G)=k(G')$, then $g_T(G') = g_T(G) - 1$, and if $k(G') = k(G) + 1$, then $g_T(G') = g_T(G'') = g_T(G)$.
\item Suppose that $G$ contains a vertex $v$ of degree two, incident to edges $e_1$ and $e_2$. Let $G' = G/\{e_1,e_2\}$. Then $g_T(G') = g_T(G)$. 
\end{enumerate}
\end{corollary}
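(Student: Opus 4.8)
The plan is to derive Corollary~\ref{cor:TuraevAlgorithm} directly from Proposition~\ref{prop:PlaneRecursive} together with Theorem~\ref{thm:EmbeddingIndependent}. Theorem~\ref{thm:EmbeddingIndependent} guarantees that $g_T(\mathbb{G})$ depends only on the underlying graph $G$ and not on the chosen sphere embedding, which is precisely what licenses the notation $g_T(G)$. So for each of the three cases it suffices to select one convenient sphere embedding of $G$ to which the corresponding clause of Proposition~\ref{prop:PlaneRecursive} applies, and then to observe that every quantity appearing in the conclusion --- namely $g_T$, the number of components $k$, and the property of being an alternating decomposition graph --- is intrinsic to the abstract graph and hence unaffected by the choice of embedding.

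First, clause (1): a collection of isolated vertices has an essentially unique sphere embedding $\mathbb{G}$, and Proposition~\ref{prop:PlaneRecursive}(1) gives $g_T(\mathbb{G})=0$, hence $g_T(G)=0$. Next, clause (3): if $v$ is a degree-two vertex of $G$ incident to $e_1$ and $e_2$, take any sphere embedding $\mathbb{G}$ of $G$; then $v$ still has degree two in $\mathbb{G}$ and is still incident there to $e_1,e_2$, so Proposition~\ref{prop:PlaneRecursive}(3) applies and shows that $G' = G/\{e_1,e_2\}$ is again an alternating decomposition graph with $g_T(\mathbb{G}')=g_T(\mathbb{G})$; since $\mathbb{G}'$ is a sphere embedding of $G'$ this reads $g_T(G')=g_T(G)$.

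The remaining case, clause (2), requires producing a sphere embedding in which the given parallel edges bound a face. Suppose $e_1$ and $e_2$ both join vertices $u$ and $v$ in $G$. Delete $e_2$, fix any planar embedding of $G-e_2$, let $F$ be one of the two faces of this embedding incident to $e_1$, and reinsert $e_2$ inside $F$ running alongside $e_1$; in the resulting sphere embedding $\mathbb{G}$ the edges $e_1$ and $e_2$ cobound a bigon face. Now Proposition~\ref{prop:PlaneRecursive}(2) applies to $\mathbb{G}$ with $\mathbb{G}'=\mathbb{G}-\{e_1,e_2\}$ and $\mathbb{G}''=\mathbb{G}/\{e_1,e_2\}$. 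Since $k(\mathbb{G}')=k(G')$ and $k(\mathbb{G}'')=k(G'')$, and since $\mathbb{G}'$, $\mathbb{G}''$ are sphere embeddings of $G'$, $G''$ (with $G''$ an alternating decomposition graph exactly when $k(G')=k(G)+1$, as in the proposition), the dichotomy ``$g_T(\mathbb{G}')=g_T(\mathbb{G})-1$ when $k(\mathbb{G}')=k(\mathbb{G})$'' versus ``$g_T(\mathbb{G}')=g_T(\mathbb{G}'')=g_T(\mathbb{G})$ when $k(\mathbb{G}')=k(\mathbb{G})+1$'' transfers verbatim to $G$, $G'$, $G''$. The only point needing attention is the elementary planarity fact that a pair of parallel edges can always be routed so as to bound a face; everything else is a direct translation, so I expect this minor re-routing observation, rather than any serious difficulty, to be the main thing to verify.
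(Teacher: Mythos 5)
Your proposal is correct and follows the same route the paper intends: the corollary is exactly Proposition~\ref{prop:PlaneRecursive} made embedding-independent via Theorem~\ref{thm:EmbeddingIndependent}. Your bigon re-routing for clause (2) is a sound way to make the proposition's hypothesis apply; note also that the second half of the proof of Theorem~\ref{thm:EmbeddingIndependent} already shows directly that deleting a parallel pair changes the genus in the stated way even when the pair does not bound a face, so either justification suffices.
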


\begin{example}
Let $G$ be the alternating decomposition graph on the top left of Figure \ref{figure:AlgExample}. One can apply the algorithm of Corollary \ref{cor:TuraevAlgorithm} to $G$ as follows. First delete four pairs of parallel edges as shown to obtain the graph $G'$. Since $k(G)=k(G')$, it follows that $g_T(G)=g_T(G')+4$. Second, contract the remaining four pairs of parallel edges to obtain $G''$, and note that $g_T(G'') = g_T(G')$. Finally apply operation (3) of Corollary \ref{cor:TuraevAlgorithm} to four degree two vertices of $G''$ to obtain $C_2^2$. Since $g_T(C_2^2)=1$, it follows that $g_T(G)=5$. This example shows that it is not always possible to find $g_T(G)$ pairs of parallel edges in $G$ whose deletion do not increase the number of components.
\end{example}

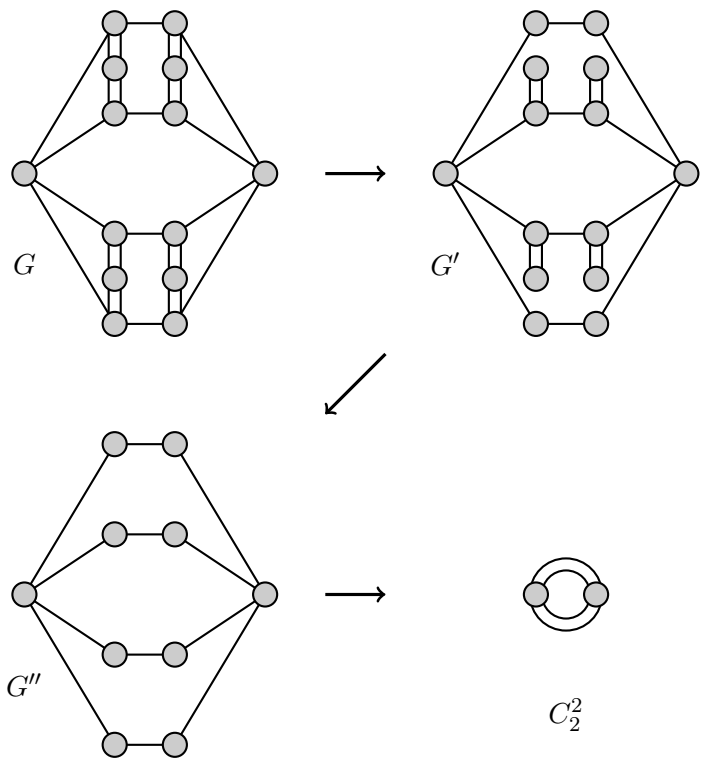
\begin{figure}[h]
$$\begin{tikzpicture}[thick, scale = .8]

\draw (-.5,1) node{$G$};

\draw (-.5,2.5) -- (1,0) -- (2,0) -- (3.5,2.5) -- (2,1.5) -- (1,1.5) -- (-.5,2.5) -- (1,3.5) -- (2,3.5) -- (3.5,2.5) -- (2,5) -- (1,5) -- (-.5,2.5);
\draw (.9,5) -- (.9,4.25);
\draw (1.1,5) -- (1.1,4.25);
\draw (1.9,5) -- (1.9,4.25);
\draw (2.1, 5) -- (2.1,4.25);
\draw (.9,0) -- (.9,.75);
\draw (1.1,0) -- (1.1,.75);
\draw (1.9,0) -- (1.9,.75);
\draw (2.1,0) -- (2.1,.75);

\draw (.9,4.25) -- (.9,3.5);
\draw (1.1, 4.25) -- (1.1, 3.5);
\draw (1.9,4.25) -- (1.9,3.5);
\draw (2.1,4.25) -- (2.1,3.5);
\draw (.9,.75) -- (.9,1.5);
\draw (1.1,.75) -- (1.1,1.5);
\draw (1.9,.75) -- (1.9,1.5);
\draw (2.1,.75) -- (2.1,1.5);

\fill[black!20!white] (-.5,2.5) circle (.2cm);
\fill[black!20!white] (1,0) circle (.2cm);
\fill[black!20!white] (1,.75) circle (.2cm);
\fill[black!20!white] (1,1.5) circle (.2cm);
\fill[black!20!white] (1,3.5) circle (.2cm);
\fill[black!20!white] (1,4.25) circle (.2cm);
\fill[black!20!white] (1,5) circle (.2cm);
\fill[black!20!white] (2,0) circle (.2cm);
\fill[black!20!white] (2,.75) circle (.2cm);
\fill[black!20!white] (2,1.5) circle (.2cm);
\fill[black!20!white] (2,3.5) circle (.2cm);
\fill[black!20!white] (2,4.25) circle (.2cm);
\fill[black!20!white] (2,5) circle (.2cm);
\fill[black!20!white] (3.5,2.5) circle (.2cm);

\draw (-.5,2.5) circle (.2cm);
\draw (1,0) circle (.2cm);
\draw (1,.75) circle (.2cm);
\draw (1,1.5) circle (.2cm);
\draw (1,3.5) circle (.2cm);
\draw (1,4.25) circle (.2cm);
\draw (1,5) circle (.2cm);
\draw (2,0) circle (.2cm);
\draw (2,.75) circle (.2cm);
\draw (2,1.5) circle (.2cm);
\draw (2,3.5) circle (.2cm);
\draw (2,4.25) circle (.2cm);
\draw (2,5) circle (.2cm);
\draw (3.5,2.5) circle (.2cm);

\draw[very thick, ->] (4.5,2.5) -- (5.5,2.5);

\draw[very thick, ->] (5.5,-.5) -- (4.5,-1.5);

\begin{scope}[xshift = 7cm]

\draw (-.5,1) node{$G'$};
\draw (-.5,2.5) -- (1,0) -- (2,0) -- (3.5,2.5) -- (2,1.5) -- (1,1.5) -- (-.5,2.5) -- (1,3.5) -- (2,3.5) -- (3.5,2.5) -- (2,5) -- (1,5) -- (-.5,2.5);

\draw (.9,4.25) -- (.9,3.5);
\draw (1.1, 4.25) -- (1.1, 3.5);
\draw (1.9,4.25) -- (1.9,3.5);
\draw (2.1,4.25) -- (2.1,3.5);
\draw (.9,.75) -- (.9,1.5);
\draw (1.1,.75) -- (1.1,1.5);
\draw (1.9,.75) -- (1.9,1.5);
\draw (2.1,.75) -- (2.1,1.5);

\fill[black!20!white] (-.5,2.5) circle (.2cm);
\fill[black!20!white] (1,0) circle (.2cm);
\fill[black!20!white] (1,.75) circle (.2cm);
\fill[black!20!white] (1,1.5) circle (.2cm);
\fill[black!20!white] (1,3.5) circle (.2cm);
\fill[black!20!white] (1,4.25) circle (.2cm);
\fill[black!20!white] (1,5) circle (.2cm);
\fill[black!20!white] (2,0) circle (.2cm);
\fill[black!20!white] (2,.75) circle (.2cm);
\fill[black!20!white] (2,1.5) circle (.2cm);
\fill[black!20!white] (2,3.5) circle (.2cm);
\fill[black!20!white] (2,4.25) circle (.2cm);
\fill[black!20!white] (2,5) circle (.2cm);
\fill[black!20!white] (3.5,2.5) circle (.2cm);

\draw (-.5,2.5) circle (.2cm);
\draw (1,0) circle (.2cm);
\draw (1,.75) circle (.2cm);
\draw (1,1.5) circle (.2cm);
\draw (1,3.5) circle (.2cm);
\draw (1,4.25) circle (.2cm);
\draw (1,5) circle (.2cm);
\draw (2,0) circle (.2cm);
\draw (2,.75) circle (.2cm);
\draw (2,1.5) circle (.2cm);
\draw (2,3.5) circle (.2cm);
\draw (2,4.25) circle (.2cm);
\draw (2,5) circle (.2cm);
\draw (3.5,2.5) circle (.2cm);
\end{scope}

\begin{scope}[yshift = -7cm]

\draw (-.5,1) node{$G''$};

\draw (-.5,2.5) -- (1,0) -- (2,0) -- (3.5,2.5) -- (2,1.5) -- (1,1.5) -- (-.5,2.5) -- (1,3.5) -- (2,3.5) -- (3.5,2.5) -- (2,5) -- (1,5) -- (-.5,2.5);

\fill[black!20!white] (-.5,2.5) circle (.2cm);
\fill[black!20!white] (1,0) circle (.2cm);
\fill[black!20!white] (1,1.5) circle (.2cm);
\fill[black!20!white] (1,3.5) circle (.2cm);
\fill[black!20!white] (1,5) circle (.2cm);
\fill[black!20!white] (2,0) circle (.2cm);
\fill[black!20!white] (2,1.5) circle (.2cm);
\fill[black!20!white] (2,3.5) circle (.2cm);
\fill[black!20!white] (2,5) circle (.2cm);
\fill[black!20!white] (3.5,2.5) circle (.2cm);

\draw (-.5,2.5) circle (.2cm);
\draw (1,0) circle (.2cm);
\draw (1,1.5) circle (.2cm);
\draw (1,3.5) circle (.2cm);
\draw (1,5) circle (.2cm);
\draw (2,0) circle (.2cm);
\draw (2,1.5) circle (.2cm);
\draw (2,3.5) circle (.2cm);
\draw (2,5) circle (.2cm);
\draw (3.5,2.5) circle (.2cm);

\draw[very thick, ->] (4.5,2.5) -- (5.5,2.5);
\end{scope}

\begin{scope}[xshift = 2cm, yshift = -5.5cm]
\draw (6.5,1) circle (.4 cm);
\draw (6.5,1) circle (.6cm);

\fill[black!20!white] (6,1)circle (.2cm);
\draw (6,1) circle (.2cm);

\fill[black!20!white] (7,1) circle (.2cm);
\draw (7,1) circle (.2cm);

\draw (6.5, -1) node{$C_2^2$};

\end{scope}

\end{tikzpicture}$$
\caption{The graph $G$ is transformed into $C_2^2$ via the algorithm of Corollary \ref{cor:TuraevAlgorithm}. The first step decreases Turaev genus by four, while the second and third steps do not change Turaev genus. Since $g_T(C_2^2)=1$, it follows that $g_T(G)=5$.}
\label{figure:AlgExample}
\end{figure}

\begin{proposition}
\label{prop:doubledpath}
Suppose that $G_1$ and $G_2$ are doubled path equivalent alternating decomposition graphs. Then $g_T(G_1) = g_T(G_2)$.
\end{proposition}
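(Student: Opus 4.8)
The plan is to reduce to a single doubled path move and then run the recursive algorithm of Corollary~\ref{cor:TuraevAlgorithm}. Since $G'$ is a doubled path contraction of $G$ precisely when $G$ is a doubled path extension of $G'$, and doubled path equivalence is generated by single moves, it suffices to prove that a single doubled path extension preserves $g_T$: if $\Gamma$ carries a doubled path along which two consecutive vertices $a$ and $b$ are joined by a parallel pair of edges $\{f_1,f_2\}$, and $\Gamma_+$ is obtained by replacing $\{f_1,f_2\}$ by a new degree-four vertex $x$ joined to $a$ by a parallel pair $\{g_1,g_2\}$ and to $b$ by a parallel pair $\{h_1,h_2\}$, then $g_T(\Gamma_+)=g_T(\Gamma)$. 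One point must be settled first: contracting a parallel pair identifies two vertices from opposite sides of a bipartition, so the graphs arising along a doubled path equivalence — including $\Gamma$ and $\Gamma_+$ — need not be bipartite, hence need not literally be alternating decomposition graphs. I would therefore extend $g_T$ to the class of all planar graphs with every vertex of even degree (and their disjoint unions) via the twisted ribbon graph, exactly as in Proposition~\ref{prop:twisted}, and note that Corollary~\ref{cor:TuraevAlgorithm} (equivalently Proposition~\ref{prop:PlaneRecursive} together with Theorem~\ref{thm:EmbeddingIndependent}) remains valid on this larger class: each genus identity there is an Euler characteristic computation of the form $g(\mathbb{H})=\frac{1}{2}(2k(\mathbb{H})-v(\mathbb{H})+e(\mathbb{H})-f(\mathbb{H}))$ on a twisted ribbon graph, and bipartiteness (and the one appeal to a link diagram) enters only to rule out a single configuration of boundary arcs when a deleted parallel pair fails to bound a bigon, which can be arranged away in the applications below.

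Now apply Corollary~\ref{cor:TuraevAlgorithm}(2) to the parallel pair $\{g_1,g_2\}$ in $\Gamma_+$. If deleting $\{g_1,g_2\}$ raises the number of connected components, then $\Gamma_+/\{g_1,g_2\}$ — obtained by merging $x$ into $a$, with $\{h_1,h_2\}$ now playing the role of $\{f_1,f_2\}$ — is isomorphic to $\Gamma$, and Corollary~\ref{cor:TuraevAlgorithm}(2) gives $g_T(\Gamma)=g_T(\Gamma_+/\{g_1,g_2\})=g_T(\Gamma_+)$. Otherwise deleting $\{g_1,g_2\}$ leaves the component count unchanged, so Corollary~\ref{cor:TuraevAlgorithm}(2) gives $g_T(\Gamma_+-\{g_1,g_2\})=g_T(\Gamma_+)-1$; in $\Gamma_+-\{g_1,g_2\}$ the vertex $x$ now has degree two with remaining edges $h_1,h_2$, so contracting them via Corollary~\ref{cor:TuraevAlgorithm}(3) deletes $x$ altogether and produces exactly $\Gamma-\{f_1,f_2\}$, whence $g_T(\Gamma-\{f_1,f_2\})=g_T(\Gamma_+)-1$. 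But in this second case $a$ and $b$ remain connected in $\Gamma-\{f_1,f_2\}$ (a path witnessing that $a$ and $x$ are still connected in $\Gamma_+-\{g_1,g_2\}$ must reach $x$ through $b$ and otherwise avoids $x$), so deleting $\{f_1,f_2\}$ does not change the component count of $\Gamma$, and Corollary~\ref{cor:TuraevAlgorithm}(2) gives $g_T(\Gamma-\{f_1,f_2\})=g_T(\Gamma)-1$. Comparing the two expressions for $g_T(\Gamma-\{f_1,f_2\})$ yields $g_T(\Gamma_+)=g_T(\Gamma)$, and induction on the length of the doubled path equivalence gives $g_T(G_1)=g_T(G_2)$.

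The step I expect to be the real obstacle is the extension of the machinery to non-bipartite graphs claimed in the first paragraph: one must verify that the twisted ribbon graph definition of $g_T$, its independence of the chosen sphere embedding, and the three rules of Corollary~\ref{cor:TuraevAlgorithm} all persist once bipartiteness is dropped. This comes down to re-examining the boundary-arc casework in the proof of Theorem~\ref{thm:EmbeddingIndependent} (Figure~\ref{figure:parallel}) and checking that for the parallel pairs contracted or deleted above — which, by the doubled path structure, are consecutive in the rotation at the adjacent degree-four vertex and hence, after an innocuous change of embedding, bound a bigon face — only the bipartiteness-free computation of Proposition~\ref{prop:PlaneRecursive}(2) is needed and no link diagram is invoked. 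Once that is granted, the remaining argument is routine graph bookkeeping.
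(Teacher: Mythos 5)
Your reduction to a single doubled path extension and the two-case deletion--contraction bookkeeping are sound as pure graph theory, but the step you yourself flag as ``the real obstacle'' is a genuine gap, not a routine verification, and your proposed patch does not close it. Corollary~\ref{cor:TuraevAlgorithm}(2) for a parallel pair that need not bound a face is not a consequence of the bipartiteness-free Euler count in Proposition~\ref{prop:PlaneRecursive}(2); it rests on Theorem~\ref{thm:EmbeddingIndependent}, whose boundary-arc case analysis uses the link diagram in an essential way: the two boundary arcs of a twisted edge correspond to segments of the all-$A$ and all-$B$ states and hence lie on \emph{distinct} boundary components of the twisted embedding. For non-bipartite even-degree plane graphs this fact is simply false --- a doubled path contraction can create loops (contract a length-one doubled path in $C_2^2$), and a vertex disk with one twisted loop band is a M\"obius band whose two boundary arcs lie on its single boundary circle --- so the casework of Figure~\ref{figure:parallel} does not extend wholesale. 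Your escape route, arranging by ``an innocuous change of embedding'' that the relevant pairs bound bigons, is circular: passing to a different sphere embedding without changing $g_T$ is exactly the embedding-independence statement you would first have to extend to the non-bipartite class. There is the further unaddressed point that for non-bipartite intermediate graphs (including your $\Gamma$ and $\Gamma_+$ themselves) the twisted embedding is non-orientable, so even the definition of $g_T$ and the integer genus drops quoted from rules (2) and (3) need to be re-founded, say via Euler genus, before the recursion can be invoked.

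The paper avoids all of this with a much shorter argument: it never defines $g_T$ for the intermediate graphs at all, but observes that a doubled path extension adds one vertex, two edges, and one face to the twisted embedding, so its Euler characteristic is unchanged along the whole sequence; since the endpoints $G_1$ and $G_2$ are honest alternating decomposition graphs with orientable twisted embeddings, equal Euler characteristics give equal genera (and the remark after Proposition~\ref{prop:doubledpath} notes the count is indifferent to orientability). If you want to rescue your route, the natural repair is in that spirit: propagate a fixed sphere embedding along the sequence of moves, delete only pairs that bound bigons in the current embedding, and verify the local boundary-component counts directly --- at which point you have essentially rederived the Euler characteristic argument.
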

\begin{proof}
Let $G$ be an alternating decomposition graph with sphere embedding $\mathbb{G}$ and twisted embedding $\widetilde{\mathbb{G}}$. A doubled path extension adds one vertex, two edges, and one face to $\widetilde{\mathbb{G}}$, and a doubled path contraction removes one vertex, two edges, and one face from $\widetilde{\mathbb{G}}$. Therefore the Euler characteristic of $\widetilde{\mathbb{G}}$ is unchanged by either doubled path extensions or doubled path contractions. If $G_1$ and $G_2$ are doubled path equivalent alternating decomposition graphs with twisted embeddings $\widetilde{\mathbb{G}}_1$ and $\widetilde{\mathbb{G}}_2$, then the Euler characteristics (and hence genera) of $\widetilde{\mathbb{G}}_1$ and $\widetilde{\mathbb{G}}_2$ agree. Thus $g_T(G_1) = g_T(G_2)$.
\end{proof}

We remind the reader that doubled path extensions and contractions can transform an alternating decomposition graph into a non-bipartite graph whose associated twisted embedding is non-orientable. However, the Euler characteristic argument in the proof of Proposition \ref{prop:doubledpath} applies in both the orientable or non-orientable cases. We also warn the reader that doubled path extensions and contractions only change the length of existing doubled paths. Creating new doubled paths or entirely destroying doubled paths will change the Turaev genus of the graph.

\section{Alternating decomposition graphs of Turaev genus zero}
\label{section:zero}

Turaev \cite{Turaev:SimpleProof} showed that the genus of the Turaev surface of a link diagram $D$ is zero if and only if $D$ is a connected sum of alternating diagrams. In this section, we use Turaev's result to give a classification of alternating decomposition graphs of Turaev genus zero. In order to accomplish this, we will study the behavior of the alternating decomposition graph under certain types of connected sums. 

Suppose that $D$ is a link diagram with $g_T(D)=0$. Hence $D=D_1 \#\cdots \# D_k$ is a connected sum of alternating diagrams $D_1,\dots , D_k$. For $i=1,\dots, k$, let $\widetilde{D}_i = D_1\#\cdots \#D_i.$ Then $D=\widetilde{D}_k$ and $\widetilde{D}_{i+1} = \widetilde{D}_i \# D_{i+1}$. Thus to classify connected sums of alternating diagrams, it suffices to examine the connected sum of a (possibly non-alternating) diagram $\widetilde{D}_i$ and an alternating diagram $D_{i+1}$. See Figure \ref{figure:connectedsum}.

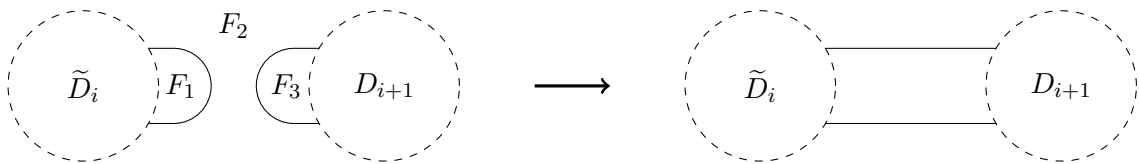
\begin{figure}[h]
$$\begin{tikzpicture}
\begin{scope}[xshift = .2cm]
\draw (.2,.5) -- (1,.5);
\draw (.2, -.5) -- (1,-.5);
\draw (1,.5) arc (90:-90:.5cm);
\end{scope}

\begin{scope}[xshift = -.2cm]

\draw (3.8,.5) -- (3,.5);
\draw (3,-.5) -- (3.8,-.5);
\draw (3,.5) arc (90:270:.5cm);
\end{scope}

\fill[white] (0,0) circle (1cm);
\fill[white] (4,0) circle (1cm);
\draw[dashed] (0,0) circle (1cm);
\draw (0,0) node{$\widetilde{D}_i$};
\draw[dashed] (4,0) circle (1cm);
\draw (4,0) node{$D_{i+1}$};

\draw (1.3,0) node{$F_1$};
\draw (2.7,0) node{$F_3$};
\draw (2,.8) node{$F_2$};

\draw[very thick, ->] (6,0) -- (7,0);

\begin{scope}[xshift=9cm]

	\draw (.6,.5) -- (3.8,.5);
	\draw (.5,.3) -- (.5,.7);
	\draw (.4,.5) -- (.2,.5);
	\draw (3.5,.3) -- (3.5,.4);
	\draw (3.5,.6) -- (3.5,.7);

	\draw (.2, -.5) -- (3.4,-.5);
	\draw (3.5, -.7) -- (3.5,-.3);
	\draw (3.6,-.5) -- (3.8,-.5);
	\draw (.5,-.3) -- (.5,-.4);
	\draw (.5,-.6) -- (.5,-.7);

	\fill[white] (0,0) circle (1cm);
	\fill[white](4,0) circle (1cm);
	\draw[dashed] (0,0) circle (1cm);
	\draw (0,0) node{$\widetilde{D}_i$};
	\draw[dashed] (4,0) circle (1cm);
	\draw (4,0) node{$D_{i+1}$};

\end{scope}

\end{tikzpicture}$$
\caption{On the left is the disjoint union of $\widetilde{D}_i$ and $D_{i+1}$, and on the right is a connected sum of $\widetilde{D}_i$ and $D_{i+1}$. The diagram $D_{i+1}$ is alternating. For $k=1$, $2$, and $3$, let $F_k$ denote the indicated face of $\widetilde{D}_k\sqcup D_{k+1}$.}
\label{figure:connectedsum}
\end{figure}

Let $\widetilde{G}_{i}$ be the alternating decomposition graph of $\widetilde{D}_i$, for each $i=1,\dots, k$. Since $D_{i+1}$ is alternating, its alternating decomposition graph is a single vertex. We examine how $\widetilde{G}_{i+1}$ is obtained from $\widetilde{G}_i$. A face of a link diagram is said to be {\em alternating} if every edge in the boundary of that face is alternating. Otherwise, the face is said to be {\em non-alternating}. Let $e_i$ be the edge of $\widetilde{D}_i$ and let $e_{i+1}$ be the edge of $D_{i+1}$ along which we are taking the connected sum. The edge $e_{i+1}$ is necessarily alternating, but $e_{i}$ can be either alternating or non-alternating. Figure \ref{figure:altdecompsum} shows the alternating decomposition curves in the seven relevant cases, which we describe in detail below.

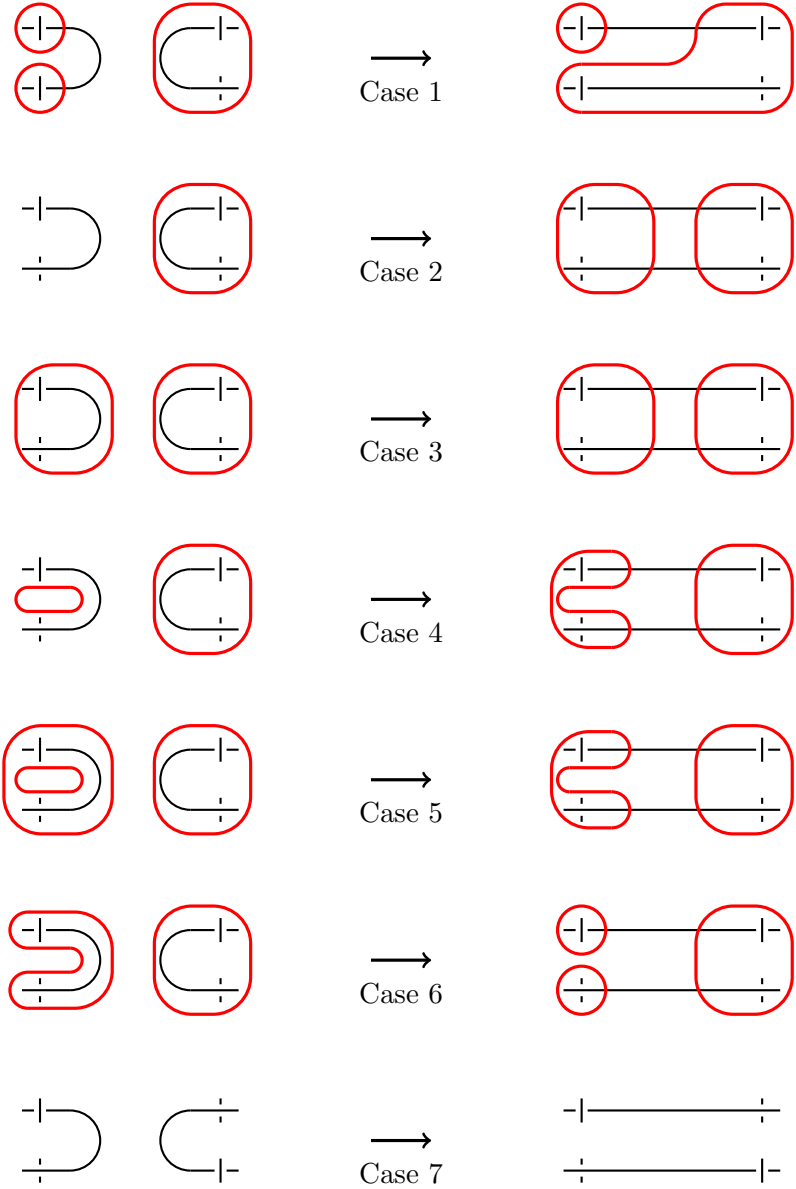
\begin{figure}[h]
$$\begin{tikzpicture}[thick,scale=.8]]

\begin{scope}[yshift = -3 cm, thick]

	\draw (.6,.5) -- (1,.5);
	\draw (3.4,.5) -- (3,.5);
	\draw (3.6,.5) -- (3.8,.5);
	\draw (1,.5) arc (90:-90:.5cm);
	\draw (.5,.3) -- (.5,.7);
	\draw (.4,.5) -- (.2,.5);
	\draw (3.5,.3) -- (3.5,.7);

	\draw (.6, -.5) -- (1,-.5);
	\draw (.2, -.5) -- (.4, -.5);
	\draw (3,-.5) -- (3.8,-.5);
	\draw (3,.5) arc (90:270:.5cm);
	\draw (3.5, -.7) -- (3.5,-.6);
	\draw (3.5, -.4) -- (3.5, -.3);
	\draw (.5,-.3) -- (.5,-.7);
	
	\draw[red, very thick] (.5,.5) circle (.4cm);
	\draw[red, very thick] (.5,-.5) circle (.4cm);
	
	\draw[red, very thick, rounded corners=5mm] (3.2,.9) -- (2.4,.9) -- (2.4,-.9) -- (4,-.9) -- (4,.9) -- (3.2,.9);

	\draw[very thick, ->] (6,0) -- (7,0);
	\draw (6.5,-.2) node[below]{Case 1};

	\begin{scope}[xshift=9cm]

		\draw (.6,.5) -- (3.4,.5);
		\draw (3.6,.5) -- (3.8,.5);
		\draw (.5,.3) -- (.5,.7);
		\draw (.4,.5) -- (.2,.5);
		\draw (3.5,.3) -- (3.5,.7);
		
		\draw (.2, -.5) -- (.4, -.5);
		\draw (.6, -.5) -- (3.8,-.5);
		\draw (3.5, -.4) -- (3.5,-.3);
		\draw (3.5,-.6) -- (3.5,-.7);
		\draw (.5,-.3) -- (.5,-.7);
		
		\draw[red, very thick] (.5,.5) circle (.4cm);
		\draw[red, very thick] (.5,-.9) arc (270:90:.4cm);
		
		\draw[red, very thick, rounded corners=4mm] (.5,-.9) -- (4,-.9) -- (4,.9) -- (2.4,.9) -- (2.4,-.1) -- (.5,-.1);

	\end{scope}
\end{scope}

\begin{scope}[yshift = -6cm, thick]
	\draw (.6,.5) -- (1,.5);
	\draw (3.4,.5) -- (3,.5);
	\draw (3.6,.5) -- (3.8,.5);
	\draw (1,.5) arc (90:-90:.5cm);
	\draw (.5,.3) -- (.5,.7);
	\draw (.4,.5) -- (.2,.5);
	\draw (3.5,.3) -- (3.5,.7);

	\draw (.2, -.5) -- (1,-.5);
	\draw (3,-.5) -- (3.8,-.5);
	\draw (3,.5) arc (90:270:.5cm);
	\draw (3.5, -.7) -- (3.5,-.6);
	\draw (3.5, -.4) -- (3.5, -.3);
	\draw (.5,-.3) -- (.5,-.4);
	\draw (.5,-.6) -- (.5,-.7);
	
	\draw[red, very thick, rounded corners=5mm] (3.2,.9) -- (2.4,.9) -- (2.4,-.9) -- (4,-.9) -- (4,.9) -- (3.2,.9);
	
	\draw[very thick, ->] (6,0) -- (7,0);
	\draw (6.5,-.2) node[below]{Case 2};
	
	\begin{scope}[xshift=9cm]
		\draw (.6,.5) -- (3.4,.5);
		\draw (3.6,.5) -- (3.8,.5);
		\draw (.5,.3) -- (.5,.7);
		\draw (.4,.5) -- (.2,.5);
		\draw (3.5,.3) -- (3.5,.7);
		
		\draw (.2, -.5) -- (3.8,-.5);
		\draw (3.5, -.4) -- (3.5,-.3);
		\draw (3.5,-.6) -- (3.5,-.7);
		\draw (.5,-.3) -- (.5,-.4);
		\draw (.5,-.6) -- (.5,-.7);
		
		\draw[red, very thick, rounded corners=5mm] (3.2,.9) -- (2.4,.9) -- (2.4,-.9) -- (4,-.9) -- (4,.9) -- (3.2,.9);
		\draw[xshift = -2.3cm,red, very thick, rounded corners=5mm] (3.2,.9) -- (2.4,.9) -- (2.4,-.9) -- (4,-.9) -- (4,.9) -- (3.2,.9);
	
	\end{scope}

\end{scope}

\begin{scope}[yshift = -9cm, thick]
	\draw (.6,.5) -- (1,.5);
	\draw (3.4,.5) -- (3,.5);
	\draw (3.6,.5) -- (3.8,.5);
	\draw (1,.5) arc (90:-90:.5cm);
	\draw (.5,.3) -- (.5,.7);
	\draw (.4,.5) -- (.2,.5);
	\draw (3.5,.3) -- (3.5,.7);

	\draw (.2, -.5) -- (1,-.5);
	\draw (3,-.5) -- (3.8,-.5);
	\draw (3,.5) arc (90:270:.5cm);
	\draw (3.5, -.7) -- (3.5,-.6);
	\draw (3.5, -.4) -- (3.5, -.3);
	\draw (.5,-.3) -- (.5,-.4);
	\draw (.5,-.6) -- (.5,-.7);
	\draw[red, very thick, rounded corners=5mm] (3.2,.9) -- (2.4,.9) -- (2.4,-.9) -- (4,-.9) -- (4,.9) -- (3.2,.9);
	\draw[xshift = -2.3cm,red, very thick, rounded corners=5mm] (3.2,.9) -- (2.4,.9) -- (2.4,-.9) -- (4,-.9) -- (4,.9) -- (3.2,.9);

	\draw[very thick, ->] (6,0) -- (7,0);
	\draw (6.5,-.2) node[below]{Case 3};
	
	\begin{scope}[xshift=9cm]
		\draw (.6,.5) -- (3.4,.5);
		\draw (3.6,.5) -- (3.8,.5);
		\draw (.5,.3) -- (.5,.7);
		\draw (.4,.5) -- (.2,.5);
		\draw (3.5,.3) -- (3.5,.7);
		
		\draw (.2, -.5) -- (3.8,-.5);
		\draw (3.5, -.4) -- (3.5,-.3);
		\draw (3.5,-.6) -- (3.5,-.7);
		\draw (.5,-.3) -- (.5,-.4);
		\draw (.5,-.6) -- (.5,-.7);
		
		\draw[red, very thick, rounded corners=5mm] (3.2,.9) -- (2.4,.9) -- (2.4,-.9) -- (4,-.9) -- (4,.9) -- (3.2,.9);
	\draw[xshift = -2.3cm,red, very thick, rounded corners=5mm] (3.2,.9) -- (2.4,.9) -- (2.4,-.9) -- (4,-.9) -- (4,.9) -- (3.2,.9);
	
	\end{scope}

\end{scope}

\begin{scope}[yshift = -12cm, thick]
	\draw (.6,.5) -- (1,.5);
	\draw (3.4,.5) -- (3,.5);
	\draw (3.6,.5) -- (3.8,.5);
	\draw (1,.5) arc (90:-90:.5cm);
	\draw (.5,.3) -- (.5,.7);
	\draw (.4,.5) -- (.2,.5);
	\draw (3.5,.3) -- (3.5,.7);

	\draw (.2, -.5) -- (1,-.5);
	\draw (3,-.5) -- (3.8,-.5);
	\draw (3,.5) arc (90:270:.5cm);
	\draw (3.5, -.7) -- (3.5,-.6);
	\draw (3.5, -.4) -- (3.5, -.3);
	\draw (.5,-.3) -- (.5,-.4);
	\draw (.5,-.6) -- (.5,-.7);
	
	\draw[red, very thick, rounded corners=5mm] (3.2,.9) -- (2.4,.9) -- (2.4,-.9) -- (4,-.9) -- (4,.9) -- (3.2,.9);
	\draw[red, very thick] (.3,.2) -- (1,.2);
	\draw[red, very thick] (.3,-.2) -- (1,-.2);
	\draw[red, very thick] (1,.2) arc (90:-90:.2cm);
	\draw[red, very thick] (.3,.2) arc (90:270:.2cm);
	
	\draw[very thick, ->] (6,0) -- (7,0);
	\draw (6.5,-.2) node[below]{Case 4};
	
	\begin{scope}[xshift=9cm]
		\draw (.6,.5) -- (3.4,.5);
		\draw (3.6,.5) -- (3.8,.5);
		\draw (.5,.3) -- (.5,.7);
		\draw (.4,.5) -- (.2,.5);
		\draw (3.5,.3) -- (3.5,.7);
		
		\draw (.2, -.5) -- (3.8,-.5);
		\draw (3.5, -.4) -- (3.5,-.3);
		\draw (3.5,-.6) -- (3.5,-.7);
		\draw (.5,-.3) -- (.5,-.4);
		\draw (.5,-.6) -- (.5,-.7);
		\draw[red, very thick, rounded corners=5mm] (3.2,.9) -- (2.4,.9) -- (2.4,-.9) -- (4,-.9) -- (4,.9) -- (3.2,.9);
		\draw[red, very thick] (.3,.2) arc (90:270:.2cm);
		\draw[red, very thick] (.3,.2) -- (1,.2);
		\draw[red, very thick] (.3,-.2) -- (1,-.2);
		\draw[red, very thick] (1,.2) arc (-90:90:.3cm);
		\draw[red, very thick] (1,-.2) arc (90:-90:.3cm);
		\draw[red, very thick, rounded corners = 5mm] (1,.8) -- (0,.8) -- (0,-.8) -- (1,-.8);
	
	\end{scope}

\end{scope}

\begin{scope}[yshift = -15cm, thick]
	\draw (.6,.5) -- (1,.5);
	\draw (3.4,.5) -- (3,.5);
	\draw (3.6,.5) -- (3.8,.5);
	\draw (1,.5) arc (90:-90:.5cm);
	\draw (.5,.3) -- (.5,.7);
	\draw (.4,.5) -- (.2,.5);
	\draw (3.5,.3) -- (3.5,.7);

	\draw (.2, -.5) -- (1,-.5);
	\draw (3,-.5) -- (3.8,-.5);
	\draw (3,.5) arc (90:270:.5cm);
	\draw (3.5, -.7) -- (3.5,-.6);
	\draw (3.5, -.4) -- (3.5, -.3);
	\draw (.5,-.3) -- (.5,-.4);
	\draw (.5,-.6) -- (.5,-.7);
	
		\draw[red, very thick, rounded corners=5mm] (3.2,.9) -- (2.4,.9) -- (2.4,-.9) -- (4,-.9) -- (4,.9) -- (3.2,.9);
	\draw[red, very thick] (.3,.2) -- (1,.2);
	\draw[red, very thick] (.3,-.2) -- (1,-.2);
	\draw[red, very thick] (1,.2) arc (90:-90:.2cm);
	\draw[red, very thick] (.3,.2) arc (90:270:.2cm);
	\draw[xshift = -2.3cm,red, very thick, rounded corners=5mm] (3.2,.9) -- (2.2,.9) -- (2.2,-.9) -- (4,-.9) -- (4,.9) -- (3.2,.9);
	
	\draw[very thick, ->] (6,0) -- (7,0);
	\draw (6.5,-.2) node[below]{Case 5};
	
	\begin{scope}[xshift=9cm]
		\draw (.6,.5) -- (3.4,.5);
		\draw (3.6,.5) -- (3.8,.5);
		\draw (.5,.3) -- (.5,.7);
		\draw (.4,.5) -- (.2,.5);
		\draw (3.5,.3) -- (3.5,.7);
		
		\draw (.2, -.5) -- (3.8,-.5);
		\draw (3.5, -.4) -- (3.5,-.3);
		\draw (3.5,-.6) -- (3.5,-.7);
		\draw (.5,-.3) -- (.5,-.4);
		\draw (.5,-.6) -- (.5,-.7);
				\draw[red, very thick, rounded corners=5mm] (3.2,.9) -- (2.4,.9) -- (2.4,-.9) -- (4,-.9) -- (4,.9) -- (3.2,.9);
		\draw[red, very thick] (.3,.2) arc (90:270:.2cm);
		\draw[red, very thick] (.3,.2) -- (1,.2);
		\draw[red, very thick] (.3,-.2) -- (1,-.2);
		\draw[red, very thick] (1,.2) arc (-90:90:.3cm);
		\draw[red, very thick] (1,-.2) arc (90:-90:.3cm);
		\draw[red, very thick, rounded corners = 5mm] (1,.8) -- (0,.8) -- (0,-.8) -- (1,-.8);
	
	\end{scope}

\end{scope}

\begin{scope}[yshift = -18cm, thick]
	\draw (.6,.5) -- (1,.5);
	\draw (3.4,.5) -- (3,.5);
	\draw (3.6,.5) -- (3.8,.5);
	\draw (1,.5) arc (90:-90:.5cm);
	\draw (.5,.3) -- (.5,.7);
	\draw (.4,.5) -- (.2,.5);
	\draw (3.5,.3) -- (3.5,.7);

	\draw (.2, -.5) -- (1,-.5);
	\draw (3,-.5) -- (3.8,-.5);
	\draw (3,.5) arc (90:270:.5cm);
	\draw (3.5, -.7) -- (3.5,-.6);
	\draw (3.5, -.4) -- (3.5, -.3);
	\draw (.5,-.3) -- (.5,-.4);
	\draw (.5,-.6) -- (.5,-.7);
	\draw[red, very thick, rounded corners=5mm] (3.2,.9) -- (2.4,.9) -- (2.4,-.9) -- (4,-.9) -- (4,.9) -- (3.2,.9);
	\draw[red, very thick] (.3,.8) arc (90:270:.3cm);
	\draw[red, very thick] (.3,-.8) arc (270:90:.3cm);
	\draw[red, very thick] (1,.2) arc (90:-90:.2cm);
	\draw[red, very thick] (.3,.2) -- (1,.2);
	\draw[red, very thick] (.3,-.2) -- (1, -.2);
	\draw[red, very thick, rounded corners = 5mm] (.3,.8) -- (1.7, .8) -- (1.7,-.8) -- (.3,-.8);

	\draw[very thick, ->] (6,0) -- (7,0);
	\draw (6.5,-.2) node[below]{Case 6};
	
	\begin{scope}[xshift=9cm]
		\draw (.6,.5) -- (3.4,.5);
		\draw (3.6,.5) -- (3.8,.5);
		\draw (.5,.3) -- (.5,.7);
		\draw (.4,.5) -- (.2,.5);
		\draw (3.5,.3) -- (3.5,.7);
		
		\draw (.2, -.5) -- (3.8,-.5);
		\draw (3.5, -.4) -- (3.5,-.3);
		\draw (3.5,-.6) -- (3.5,-.7);
		\draw (.5,-.3) -- (.5,-.4);
		\draw (.5,-.6) -- (.5,-.7);

		\draw[red, very thick, rounded corners=5mm] (3.2,.9) -- (2.4,.9) -- (2.4,-.9) -- (4,-.9) -- (4,.9) -- (3.2,.9);
			\draw[red, very thick] (.5,.5) circle (.4cm);
	\draw[red, very thick] (.5,-.5) circle (.4cm);
	
	\end{scope}

\end{scope}

\begin{scope}[yshift = -21cm, thick]
	\draw (.6,.5) -- (1,.5);
	\draw (3.8,.5) -- (3,.5);
	\draw (1,.5) arc (90:-90:.5cm);
	\draw (.5,.3) -- (.5,.7);
	\draw (.4,.5) -- (.2,.5);
	\draw (3.5,.3) -- (3.5,.4);
	\draw (3.5,.6) -- (3.5,.7);

	\draw (.2, -.5) -- (1,-.5);
	\draw (3,-.5) -- (3.4,-.5);
	\draw (3.6, -.5) -- (3.8,-.5);
	\draw (3,.5) arc (90:270:.5cm);
	\draw (3.5, -.7) -- (3.5,-.3);
	\draw (.5,-.3) -- (.5,-.4);
	\draw (.5,-.6) -- (.5,-.7);
	
	\draw[very thick, ->] (6,0) -- (7,0);
	\draw (6.5,-.2) node[below]{Case 7};
	
	\begin{scope}[xshift=9cm]
		\draw (.6,.5) -- (3.8,.5);
		\draw (.5,.3) -- (.5,.7);
		\draw (.4,.5) -- (.2,.5);
		\draw (3.5,.3) -- (3.5,.4);
		\draw (3.5,.6) -- (3.5,.7);
		
		\draw (.2, -.5) -- (3.4,-.5);
		\draw (3.6, -.5) -- (3.8,-.5);
		\draw (3.5, -.7) -- (3.5,-.3);
		\draw (.5,-.3) -- (.5,-.4);
		\draw (.5,-.6) -- (.5,-.7);

	\end{scope}

\end{scope}

\end{tikzpicture}$$
\caption{Taking the connected sum of $\widetilde{D}_i=D_1\#\cdots \#D_i$ and the alternating diagram $D_{i+1}$.}
\label{figure:altdecompsum}
\end{figure}

{\bf Case 1:} Suppose that $e_i$ is non-alternating. Figure \ref{figure:altdecompsum} shows the endpoints of $e_i$ passing under the crossing, but the case where the endpoints pass over the crossing is exactly the same. Taking the connected sum merges the curve in the alternating decomposition of $D_{i+1}$ with one of the curves in the alternating decomposition of $\widetilde{D}_i$. Therefore $\widetilde{G}_{i+1} = \widetilde{G}_i$.

{\bf Case 2:} Suppose that $e_i$ is alternating and the connected sum is taken as in Figure \ref{figure:altdecompsum}. Also suppose that both $F_1$ and $F_2$ are alternating faces of $\widetilde{D}_i$. Then there are no alternating decomposition curves of $\widetilde{D}_i$ in either $F_1$ or $F_2$. Hence $\widetilde{G}_{i+1} = \widetilde{G}_i \sqcup C_2$ where $C_2$ is a two cycle.

{\bf Case 3:} Suppose that $e_i$ is alternating and the connected sum is taken as in Figure \ref{figure:altdecompsum}. Also, suppose that $F_1$ is an alternating face of $\widetilde{D}_i$, while $F_2$ is a non-alternating face of $\widetilde{D}_i$. Let $\gamma$ be the alternating decomposition curve in $F_2$ that runs along $e_i$. After performing the connected sum, the curve $\gamma$ transforms into a curve that runs along the same portion of the boundary of $F_2$ and also along all of $F_1$. Thus the connected sum attaches the alternating decomposition curve of $D_{i+1}$ to $\gamma$ by two edges.  Hence $\widetilde{G}_{i+1} = \widetilde{G}_i \oplus_1 C_2$. The transformation $G \mapsto G\oplus_1 C_2$ is called a {\em doubled pendant} move and is depicted in Figure \ref{figure:doubledpendant}.
\begin{figure}[h]
$$\begin{tikzpicture}
\draw (0,0) -- (-.7,.7);
\draw (-.7,0.1) node{$\vdots$};
\draw (0,0) -- (-.7,-.7);

\fill[black!20!white] (0,0) circle (.3cm);
\draw (0,0) circle (.3cm);
\draw (0,0) node{$v_1$};
\draw (-.2,-1) node{$G$};

\draw[very thick, ->] (1,0) -- (2,0);

\begin{scope}[xshift = 3.5cm]

\draw (.6,-1) node{$G\oplus_1 C_2$};

\draw (0,0) -- (-.7,.7);
\draw (-.7,0.1) node{$\vdots$};
\draw (0,0) -- (-.7,-.7);

\draw (0,.15) -- (1.5,.15);
\draw (0,-.15) -- (1.5,-.15);

\fill[black!20!white] (0,0) circle (.3cm);
\draw (0,0) circle (.3cm);
\draw (0,0) node{$v_1$};

\fill[black!20!white] (1.5,0) circle (.3cm);
\draw (1.5,0) circle (.3cm);
\draw (1.5,0) node{$v_2$};

\end{scope}
\end{tikzpicture}$$
\caption{A doubled pendant move on $G$ results in the graph $G\oplus_1 C_2$.}
\label{figure:doubledpendant}
\end{figure}
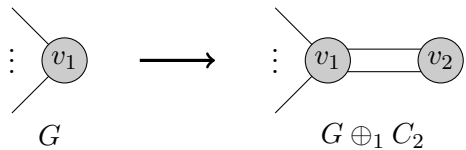

{\bf Case 4:} Suppose that $e_i$ is alternating and the connected sum is taken as in Figure \ref{figure:altdecompsum}. Also, suppose that $F_1$ is a non-alternating face of $\widetilde{D}_i$, while $F_2$ is an alternating face of $\widetilde{D}_i$. Let $\gamma$ be the alternating decomposition curve in $F_1$ that runs along $e_i$. After performing the connected sum, the curve $\gamma$ transforms into a curve that runs along the same portion of the boundary of $F_1$ and also along all of $F_2$. Thus the connected sum attaches the alternating decomposition curve of $D_{i+1}$ to $\gamma$ by two edges. Hence $\widetilde{G}_{i+1} = \widetilde{G}_i \oplus_1 C_2$.

{\bf Case 5:} Suppose that $e_i$ is alternating and the connected sum is taken as in Figure \ref{figure:altdecompsum}. Also, suppose that both $F_1$ and $F_2$ are non-alternating faces of $\widetilde{D}_i$ and that the alternating decomposition curves $\gamma_1$ and $\gamma_2$ that run along $e_i$ are distinct curves. Since the region bounded by $\gamma_1$ and $\gamma_2$ contains crossings of $\widetilde{D}_i$, it follows that the vertices of $\widetilde{G}_i$ corresponding to $\gamma_1$ and $\gamma_2$ lie in different components of $\widetilde{G}_i$. Performing the connected sum operation merges $\gamma_1$ and $\gamma_2$, and connects the alternating decomposition curve of $D_{i+1}$ to the newly merged $\gamma_1$ and $\gamma_2$ with two edges. Therefore, $\widetilde{G}_{i+1}$ is obtained from $\widetilde{G}_i$ by taking a one-sum along two vertices in separate components of $\widetilde{G}_i$ and then an additional one-sum with $C_2$.

{\bf Case 6:} Suppose that $e_i$ is alternating and the connected sum is taken as in Figure \ref{figure:altdecompsum}. Also, suppose that both $F_1$ and $F_2$ are non-alternating faces of $\widetilde{D}_i$ and that there is a single alternating decomposition curve that runs along $e_i$ in both $F_1$ and $F_2$. Performing a connected sum operation splits this alternating decomposition curve into two curves, each of which has a single edge attached to the alternating decomposition curve of $D_{i+1}$. Thus the graph $\widetilde{G}_{i+1}$ is obtained from $\widetilde{G}_i$ by 
\begin{enumerate}
\item picking a vertex $v$ of $\widetilde{G}_i$, 
\item partitioning the edges incident to $v$ into two sets $A$ and $B$ each of odd order, 
\item splitting the vertex $v$ into two new vertices $v_1$ and $v_2$ where the edge set $A$ is incident to $v_1$ and the edge set $B$ is incident to $v_2$, and
\item creating a new vertex $v_3$ of degree two adjacent to both $v_1$ and $v_2$.
\end{enumerate}
See Figure \ref{figure:two-path} for a depiction of this operation, which we call a {\em two-path extension}.
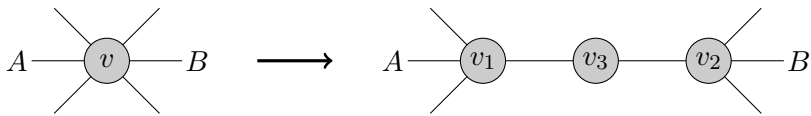
\begin{figure}[h]
$$\begin{tikzpicture}

\draw (0,0) -- (-.7,.7);
\draw (0,0) -- (-1,0);
\draw (0,0) -- (-.7,-.7);
\draw (-1.2,0) node{$A$};

\draw(0,0) -- (1,0);
\draw (0,0) -- (.7,.7);
\draw (0,0) -- (.7,-.7);
\draw (1.2,0) node{$B$};

\fill[black!20!white] (0,0) circle (.3cm);
\draw (0,0) circle (.3cm);
\draw (0,0) node{$v$};

\draw[very thick, ->] (2,0) -- (3,0);

\begin{scope}[xshift = 5cm]
\draw (0,0) -- (-.7,.7);
\draw (0,0) -- (-1,0);
\draw (0,0) -- (-.7,-.7);
\draw (-1.2,0) node{$A$};

\draw(3,0) -- (4,0);
\draw (3,0) -- (3.7,.7);
\draw (3,0) -- (3.7,-.7);
\draw (4.2,0) node{$B$};

\draw (0,0) -- (3,0);

\fill[black!20!white] (0,0) circle (.3cm);
\draw (0,0) circle (.3cm);
\draw (0,0) node{$v_1$};

\fill[black!20!white] (1.5,0) circle (.3cm);
\draw (1.5,0) circle (.3cm);
\draw (1.5,0) node{$v_3$};

\fill[black!20!white] (3,0) circle (.3cm);
\draw (3,0) circle (.3cm);
\draw (3,0) node{$v_2$};

\end{scope}

\end{tikzpicture}$$
\caption{A two-path extension. The edge sets $A$ and $B$ must each be of odd order.}
\label{figure:two-path}
\end{figure}

{\bf Case 7:} Suppose that $e_i$ is alternating and the connected sum is taken as in Figure \ref{figure:altdecompsum}. Note that this connected sum is different than Cases 2 - 6. In this case, it does not matter whether either, neither, or both of $F_1$ and $F_2$ are alternating or non-alternating. In each case, we have $\widetilde{G}_{i+1} = \widetilde{G}_i$. 

\begin{theorem}
\label{theorem:Zero}
Let $G$ be an alternating decomposition graph with $g_T(G)=0$. Then $G$ can be obtained from a collection of isolated vertices via a sequence of doubled pendant moves, two-path extensions, and one-sums along vertices in different components.
\end{theorem}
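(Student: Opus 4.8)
The plan is to leverage the case-by-case description, carried out in Cases 1--7 immediately above, of how the alternating decomposition graph changes when one forms the connected sum of a diagram with an alternating diagram. First I would fix a link diagram $D$ whose alternating decomposition graph is $G$. Since $g_T(G)=0$ by hypothesis, $g_T(D)=0$, so Turaev's characterization of Turaev genus zero diagrams gives a connected sum decomposition $D=D_1\#\cdots\#D_k$ with each $D_i$ alternating. Setting $\widetilde D_i=D_1\#\cdots\#D_i$ and letting $\widetilde G_i$ denote its alternating decomposition graph, we have that $\widetilde G_1$ is a single vertex and $\widetilde G_k=G$, and each step passing from $\widetilde G_i$ to $\widetilde G_{i+1}$ is governed by exactly one of the seven cases.

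Next I would read off, case by case, which of the three permitted moves realizes each step. In Cases 1 and 7 the graph is unchanged, so no move is needed. In Cases 3 and 4 the transition $\widetilde G_i\mapsto\widetilde G_i\oplus_1 C_2$ is a single doubled pendant move. Case 6 is exactly a two-path extension. In Case 5 the transition is a one-sum along two vertices lying in different components of $\widetilde G_i$ (they lie in different components precisely because the region they bound contains crossings of $\widetilde D_i$), followed by a doubled pendant move. The only case not literally on the permitted list is Case 2, where $\widetilde G_{i+1}=\widetilde G_i\sqcup C_2$; this is where a small device is needed.

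To accommodate Case 2 I would begin not from a single vertex but from a collection of isolated vertices: one ``active'' vertex to play the role of $\widetilde G_1$, together with one spare isolated vertex for each index at which Case 2 occurs. Since a doubled pendant move applied to an isolated vertex $v$ produces $\{v\}\oplus_1 C_2=C_2$, a spare vertex can be converted into precisely the new $C_2$ component demanded by Case 2. Processing the connected sum decomposition from $i=1$ up to $i=k-1$, applying the appropriate permitted move at each step and always performing it inside the already-built part of the graph so that the unused spares remain isolated, then transforms the starting collection of isolated vertices into $G$ using only doubled pendant moves, two-path extensions, and one-sums along vertices in different components.

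I do not anticipate a serious obstacle here, since the substantive work is the Cases 1--7 analysis already completed above and what remains is largely bookkeeping. The points that most warrant care are the Case 2 spare-vertex device, together with the observation that every permitted move can be localized away from the spare vertices, and a final check that Cases 1--7 really are exhaustive for the connected sum of a diagram with an alternating diagram --- that is, that the status of $e_i$ (alternating or not), the two possible gluings in the alternating case, and, in Cases 2--6, the alternating/non-alternating status of the two faces adjacent to $e_i$ together with whether the two alternating decomposition curves running along $e_i$ coincide, jointly exhaust all possibilities. Both of these are straightforward, the latter being implicit in the preceding discussion.
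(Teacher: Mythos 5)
Your proposal is correct and follows essentially the same route as the paper: both reduce to the Case 1--7 analysis of connected sums with alternating diagrams, split the Case 5 operation into a one-sum between components followed by a doubled pendant move, and handle Case 2 by seeding the starting collection with extra isolated vertices so that each disjoint $C_2$ arises as a doubled pendant move $\{v\}\oplus_1 C_2$ on a spare vertex. No substantive differences from the paper's argument.
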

\begin{proof}
Suppose $D$ is a link diagram with alternating decomposition graph $G$. Then $g_T(D) = g_T(G) = 0$, and hence $D=D_1\# \cdots \#D_k$ is a connected sum of alternating diagrams $D_1, \dots, D_k$. Let $\widetilde{D}_i  = D_1\# \cdots \# D_i$, and let $\widetilde{G}_i$ be the alternating decomposition graph of $\widetilde{G}_i$. Our analysis above shows that there is a sequence $\widetilde{G}_1,\widetilde{G}_2,\dots,\widetilde{G}_k=G$ of alternating decomposition graphs such that $\widetilde{G}_1$ is a collection of isolated vertices and $\widetilde{G}_{i+1}$ can be obtained from $\widetilde{G}_i$ by either doing nothing, a doubled pendant move, a two path extension, a disjoint union with $C_2$, or the multi-step operation of Case 5 (which stipulated that we glue together two components of $\widetilde{G}_i$ along a vertex, and then perform a doubled pendant move to the same vertex). 

We modify the sequence $\widetilde{G}_1,\dots,\widetilde{G}_k=G$ so that it will still begin in a collection of isolated vertices, still end in $G$, and so that each graph can be obtained from the previous one via a doubled pendant move, a two-path extension, or by identifying two vertices in different components. For each $i$ where $\widetilde{G}_{i+1}$ is obtained from $\widetilde{G}_i$ via a disjoint union with $C_2$, we modify $\widetilde{G}_j$ for $j\leq i$ by adding an isolated vertex $v$. Since $G\sqcup C_2 = G \sqcup \{v\}\oplus_1C_2$, we have changed adding a disjoint union of $C_2$ into doubled pendant move. 

For each $i$ where $\widetilde{G}_{i+1}$ is obtained from $\widetilde{G}_i$ via the operation in Case 5, we note that $\widetilde{G}_{i+1}$ is obtained from $\widetilde{G}_i$ by taking a one-sum of vertices in different components and then performing a doubled pendant move. In order to satisfy the conditions in the theorem, these two operations must be completed in separate steps. Thus we modify the sequence by increasing the index by one of each $\widetilde{G}_j$ with $j\geq i+1$. Then we set $\widetilde{G}_{i+1}$ to be the graph obtained from $\widetilde{G}_i$ by taking the prescribed one-sum of vertices in different components, and we then $\widetilde{G}_{i+2}$ can be obtained from $\widetilde{G}_{i+1}$ by a doubled pendant move.
\end{proof}

Recall that an alternating decomposition graph $G$ is reduced if it is a single vertex or if each component of $G$ is $3$-edge connected. In the following proposition, we prove that there exists a Turaev genus minimizing diagram of every non-split link with reduced alternating decomposition graph.

\begin{proposition}
Every non-split link $L$ has a diagram $D$ with alternating decomposition graph $G$ such that $G$ is reduced and such that $g_T(G) = g_T(L)$.
\end{proposition}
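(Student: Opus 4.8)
The plan is to argue by induction on the number of crossings of a Turaev genus minimizing diagram. Fix a diagram $D$ of $L$ with $g_T(D)=g_T(L)$; since $L$ is non-split, $D$ is connected. Let $G$ be the alternating decomposition graph of $D$, and induct on $c(D)$. If $G$ is reduced, there is nothing to prove, so suppose some component $G_0$ of $G$ has at least two vertices and is not $3$-edge connected. Every vertex of $G_0$ has even degree, so $G_0$ has no bridge; hence $G_0$ has an edge cut of size exactly two, say $\{e_1,e_2\}$, splitting $G_0$ into $H_1$ and $H_2$.

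First I would use this cut to produce a connected sum decomposition of $D$. View $G_0$ as embedded in the sphere $S$ carrying $D$, as in the sphere embedding $\mathbb{G}$. Because $G_0$ is connected, every face of this embedding is a disk, and because $\{e_1,e_2\}$ is a minimal edge cut, the edges $e_1$ and $e_2$ border a common pair of faces $F$ and $F'$. The edges $e_1,e_2$ of $\mathbb{G}$ run along the two non-alternating edges $\epsilon_1,\epsilon_2$ of $D$ that they represent, and every other point of $D$ lying in $F\cup F'$ is contained in the vertex-disks and edges of components of $G$ other than $G_0$, none of which meets $\partial F\cup\partial F'$. Hence there is a simple closed curve $C\subset F\cup F'\cup e_1\cup e_2$ meeting $D$ transversally in exactly two points, one on $\epsilon_1$ and one on $\epsilon_2$, whose two sides separate the crossings lying in alternating regions indexed by $H_1$ from those indexed by $H_2$. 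Thus $D=D_1\#D_2$ is a connected sum; each $H_i$ contains a vertex, hence an alternating region with a crossing, so $c(D_1),c(D_2)\geq 1$, and $D$ being connected forces $D_1$ and $D_2$ to be connected, so $L=L_1\#L_2$ with $L_1,L_2$ non-split.

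Next I would run the induction. By Equation \eqref{equation:Turaevgenus}, $g_T$ is additive under connected sum; it is also subadditive, since a connected sum of minimizing diagrams of $L_1$ and $L_2$ is a diagram of $L$. Therefore $g_T(D_1)+g_T(D_2)=g_T(D)=g_T(L)\leq g_T(L_1)+g_T(L_2)\leq g_T(D_1)+g_T(D_2)$, so $g_T(D_i)=g_T(L_i)$. Since $c(D_i)<c(D)$, the inductive hypothesis provides Turaev genus minimizing diagrams $\widehat{D}_1,\widehat{D}_2$ of $L_1,L_2$ whose alternating decomposition graphs $\widehat{G}_1,\widehat{G}_2$ are reduced. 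I then recombine: writing $K_i$ for the component of $\widehat{D}_i$ corresponding to the connect-sum component of $D$, pick an alternating region of $\widehat{D}_i$ that $K_i$ meets (possible since $K_i$ carries a crossing and every crossing lies in an alternating region), and form $\widehat{D}_1\#\widehat{D}_2$ by splicing a strand of $K_1$ in its region to a strand of $K_2$ in its region, choosing the splice so that each of the two new edges joins an over-strand end to an under-strand end. Then both new edges are alternating, so no new non-alternating edge is created; moreover the only faces that change lie inside alternating regions, which carry no marked points, so no $\gamma$-curve is disturbed. Consequently the alternating decomposition graph of $\widehat{D}_1\#\widehat{D}_2$ is $\widehat{G}_1\sqcup\widehat{G}_2$ as an abstract graph, which is reduced because $\widehat{G}_1$ and $\widehat{G}_2$ are, and $g_T(\widehat{D}_1\#\widehat{D}_2)=g_T(\widehat{D}_1)+g_T(\widehat{D}_2)=g_T(L_1)+g_T(L_2)=g_T(L)$. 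The base case $c(D)=0$ holds because then $L$ is the unknot, whose alternating decomposition graph is a single vertex.

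The step I expect to be the main obstacle is the passage from the combinatorial two-edge cut to an honest connected sum sphere: one must verify that $C$ can be chosen to meet $D$ only in the two prescribed points despite the fact that the sphere embeddings of the other components of $G$ may be nested inside the faces $F$ and $F'$ (the point being that each such nested piece is disjoint from $\partial F\cup\partial F'$, so $C$ can be pushed against the boundary of the disk $F$). Once this is in place, the additivity of Turaev genus and the bookkeeping of non-alternating edges under the recombining connected sum are routine.
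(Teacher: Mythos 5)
Your first half (a $2$-edge cut of a non-reduced component yields a simple closed curve meeting $D$ in two points, additivity of $g_T$ under diagrammatic connected sum, and the induction set-up) is essentially sound and runs parallel to the paper's observation that the prime summands of a minimizing diagram have reduced graphs. The genuine gap is in the recombination step. When you splice $\widehat{D}_1$ to $\widehat{D}_2$, the faces that change are not pieces ``inside alternating regions'': they are the two faces of $\widehat{D}_1$ incident to its cut edge, the two faces of $\widehat{D}_2$ incident to its cut edge, and the ambient face through which the two new arcs run. The fact that the cut strand lies in an alternating region does not make any of these faces alternating --- a face incident to an alternating edge may also be incident to non-alternating edges, hence carry marked points and arcs of the curves $\gamma_i$. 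When such faces merge across the splice, marked points of $\widehat{D}_1$ become adjacent along a face boundary to marked points of $\widehat{D}_2$, the curve system changes, and the resulting graph is not $\widehat{G}_1\sqcup\widehat{G}_2$; the paper's case analysis (Cases 3, 4, and 6 of Figure \ref{figure:altdecompsum}) shows the outcome can be a doubled pendant move or a two-path extension, each of which creates a degree-two vertex and destroys reducedness. Nor can you always dodge this by choosing the splice point wisely: a summand may have no edge on the relevant component whose incident faces are alternating. This is precisely what the paper's proof fixes by inserting a Reidemeister I kink into the connect-sum edge (Figure \ref{figure:twist}): the kink provides an alternating edge whose monogon face is alternating, so that splicing kink-to-kink satisfies the ``at least two of $F_1,F_2,F_3$ alternating'' criterion and the glued graph is a disjoint union or one-sum of reduced graphs, hence reduced. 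Your argument needs this device (or an explicit verification that the affected faces are alternating) and currently asserts it without justification.

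A second, smaller but real issue: because you replace $D_1,D_2$ by \emph{new} diagrams $\widehat{D}_1,\widehat{D}_2$ and re-glue, you must show the spliced diagram still represents $L$. Unoriented connected sum along specified components is not independent of the gluing when the spliced components are non-invertible, and your requirement that both new edges join an over-end to an under-end can force the gluing; you also implicitly use independence of the splice point along $K_i$. These can be repaired (slide the cut point past a crossing, or again use a kink), but as written they are unaddressed; the paper avoids the question entirely by re-gluing the \emph{same} summand diagrams in the same pattern, merely with kinks added. Minor slips in the same step: non-splitness of $L_1,L_2$ does not follow from connectedness of $D_1,D_2$ (a connected diagram can depict a split link), though it does follow from non-splitness of $L$; and ``$K_i$ carries a crossing'' is asserted for $\widehat{D}_i$ but justified only for the old diagram $D_i$ --- indeed if some $L_i$ is the unknot its reduced minimizing diagram has no crossings and no alternating region at all, a case your splice cannot even begin.
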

\begin{proof}
Equation \eqref{equation:Turaevgenus} implies that for any choice of edge along which to take a connected sum of $D_1$ and $D_2$, we have $g_T(D_1\# D_2) = g_T(D_1) + g_T(D_2)$. Let $D'$ be a diagram of $L$ that minimizes Turaev genus, i.e. such that $g_T(D')=g_T(L)$. Suppose that $D'$ can be written as a connected sum $D_1\# \cdots \# D_k$ where each $D_i$ cannot be realized as a connected sum. Let $G_i$ be the alternating decomposition graph of $D_i$. 

Since each $D_i$ cannot be realized as a connected sum, there is no circle in the plane that intersects $D_i$ exactly twice such that the two one-tangles formed are non-trivial. Therefore, there is no circle in the plane that intersects the alternating decomposition graph of $D_i$ exactly twice in two distinct edges. Hence the alternating decomposition graph $G_i$ is reduced.

However, the alternating decomposition graph $G'$ of $D'$ is not necessarily reduced. We construct another diagram $D$ of $L$ such that $g_T(D)=g_T(D')=g_T(L)$, and such that the alternating decomposition graph $G$ of $D$ is reduced. Suppose the connected sum of two diagrams $\widetilde{D}_1$ and $\widetilde{D}_2$ is formed in the same manner as Case 7 of Figure \ref{figure:altdecompsum}. Let $e_1$ and $e_2$ be the edges along which the connected sum is being taken, and let $F_1$, $F_2$, and $F_3$ be the three faces with $e_1$ and $e_2$ in their boundary (as in Figure \ref{figure:connectedsum}). If at least two of $F_1$, $F_2$, and $F_3$ are alternating faces, then the alternating decomposition graph of $\widetilde{D}_1\#\widetilde{D}_2$ is either the one-sum or disjoint union of the alternating decomposition graphs of $\widetilde{D}_1$ and $\widetilde{D}_2$. Therefore, if the alternating decomposition graphs of $\widetilde{D}_1$ and $\widetilde{D}_2$ are reduced, then the alternating decomposition graph of $\widetilde{D}_1\# \widetilde{D}_2$ is reduced.

For each summand $D_1,\dots, D_k$ in $D=D_1\# \cdots \# D_k$, insert a small twist into the edge on which a connected sum occurs, as in Figure \ref{figure:twist}. Inserting the twist does not change the alternating decomposition graph of each $D_i$, and thus does not change the genus of the associated Turaev surface. Each new twisted edge is an alternating edge, and the face bounded by that single alternating edge is an alternating face. Therefore, if all of the connected sums are taken along these twisted edges, then the alternating decomposition graph $G$ of the resulting diagram $D$ will be reduced. Moreover, since adding the twists does not change the genus of the Turaev surface, we have $g_T(D)=g_T(D')=g_T(L)$.
\end{proof}

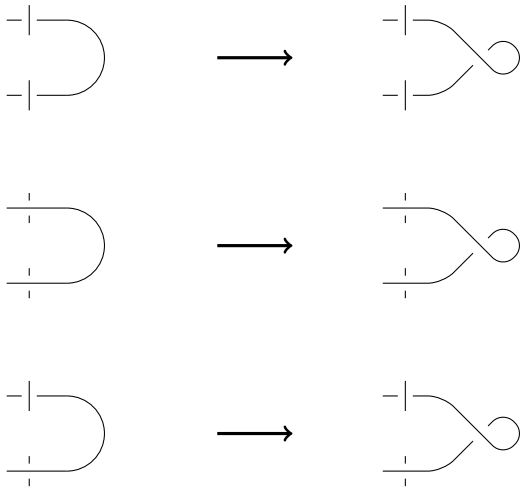
\begin{figure}[h]
$$\begin{tikzpicture}
	\draw (.6,.5) -- (1,.5);

	\draw (1,.5) arc (90:-90:.5cm);
	\draw (.5,.3) -- (.5,.7);
	\draw (.4,.5) -- (.2,.5);

	\draw (.6, -.5) -- (1,-.5);
	\draw (.2, -.5) -- (.4, -.5);
	\draw (.5,-.3) -- (.5,-.7);
	
	\draw[very thick, ->] (3,0) -- (4,0);
	
	\begin{scope}[xshift=5cm]
	
	\draw[rounded corners = 2mm] (.6,.5) -- (1,.5) -- (1.8,-.3) -- (2.1,0) -- (1.8,.3) -- (1.6,.1);

	\draw (.5,.3) -- (.5,.7);
	\draw (.4,.5) -- (.2,.5);

	\draw [rounded corners = 2mm] (.6, -.5) -- (1,-.5) -- (1.4,-.1);
	\draw (.2, -.5) -- (.4, -.5);
	\draw (.5,-.3) -- (.5,-.7);
	\end{scope}
	
\begin{scope}[yshift = -2.5cm]

\draw (.5,.3) -- (.5,.4);
\draw (.5,.6) -- (.5,.7);

\draw (.5,-.3) -- (.5,-.4);
\draw (.5,-.6) -- (.5,-.7);

\draw (.2,.5) -- (1,.5);
\draw (.2,-.5) -- (1,-.5);

\draw (1,.5) arc (90:-90:.5cm);

\draw[very thick, ->] (3,0) -- (4,0);
	
\begin{scope}[xshift=5cm]
	\draw (.5,.3) -- (.5,.4);
	\draw (.5,.6) -- (.5,.7);

	\draw (.5,-.3) -- (.5,-.4);
	\draw (.5,-.6) -- (.5,-.7);
	
	\draw[rounded corners = 2mm] (.2,.5) -- (1,.5) -- (1.8,-.3) -- (2.1,0) -- (1.8,.3) -- (1.6,.1);
	\draw [rounded corners = 2mm] (.2, -.5) -- (1,-.5) -- (1.4,-.1);

	\end{scope}

\end{scope}

\begin{scope}[yshift=-5cm]
	\draw (.6,.5) -- (1,.5);
	\draw (1,.5) arc (90:-90:.5cm);
	\draw (.5,.3) -- (.5,.7);
	\draw (.4,.5) -- (.2,.5);

	\draw (.2, -.5) -- (1,-.5);
	\draw (.5,-.3) -- (.5,-.4);
	\draw (.5,-.6) -- (.5,-.7);

\draw[very thick, ->] (3,0) -- (4,0);
	
\begin{scope}[xshift=5cm]
	\draw[rounded corners = 2mm] (.6,.5) -- (1,.5) -- (1.8,-.3) -- (2.1,0) -- (1.8,.3) -- (1.6,.1);

	\draw (.5,.3) -- (.5,.7);
	\draw (.4,.5) -- (.2,.5);

	\draw[rounded corners = 2mm] (.2, -.5) -- (1,-.5) -- (1.4,-.1);
	\draw (.5,-.3) -- (.5,-.4);
	\draw (.5,-.6) -- (.5,-.7);
\end{scope}
\end{scope}
	
\end{tikzpicture}$$

\caption{Inserting twists into edges where a connected sum is taken makes the resulting diagram have reduced alternating decomposition graph.}
\label{figure:twist}
\end{figure}

\section{Turaev genus classification results}
\label{section:classification}
In this section, we classify all reduced alternating decomposition graphs of Turaev genus one and two. We also show that for any non-negative integer $k$, there are a finite number doubled path equivalence classes of alternating decomposition graphs of Turaev genus $k$. Hence there exists a classification of all reduced alternating decomposition graphs of Turaev genus $k$ for any non-negative integer $k$.

A graph $G$ is called a {\em doubled forest} if it is obtained from a forest by doubling every edge. A {\em doubled tree} is a doubled forest with one component. Let $C_4(p,q,r,s)$ be the graph obtained by attaching doubled paths of lengths $p$, $q$, $r$, and $s$ to the vertices of a four cycle. Also, let $\widetilde{K}_4(p,q)$ be the graph obtained by removing an edge of the complete graph on four vertices $K_4$ and then attaching doubled paths of lengths $p$ and $q$ to the vertices incident to the removed edge. Let $\widetilde{K}_4(p,q)\oplus_2 \widetilde{K}_4(r,s)$ be the two-sum of $\widetilde{K}_4(p,q)$ and $\widetilde{K}_4(r,s)$ taken along the unique edge in each summand that is not contained in nor adjacent to a doubled path. See Figure \ref{figure:TuraevGenusZero}.
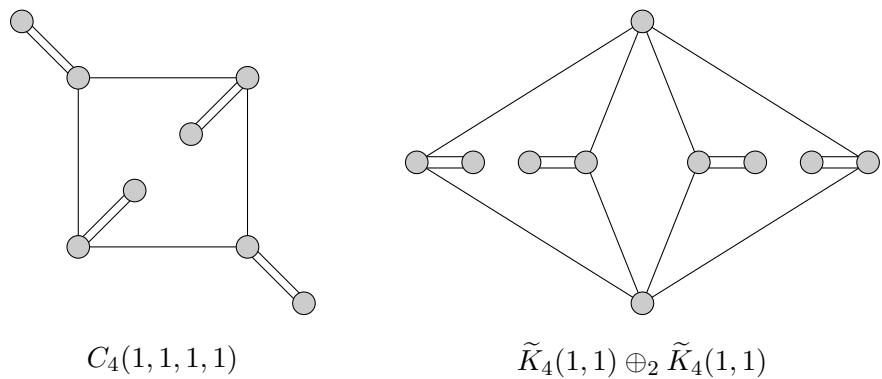
\begin{figure}[h]
$$\begin{tikzpicture}[scale=.75]

\draw (0,0) rectangle (3,3);
\draw (0,.1) -- (1,1.1);
\draw (0,-.1) -- (1,.9);
\draw (3,3.1) -- (2,2.1);
\draw (3,2.9) -- (2,1.9);
\draw (0,3.1) -- (-1,4.1);
\draw (0,2.9) -- (-1,3.9);
\draw (3,.1) -- (4,-.9);
\draw (3,-.1) -- (4,-1.1);

\fill[black!20!white] (0,0) circle (.2cm);
\fill[black!20!white] (3,0) circle (.2cm);
\fill[black!20!white] (3,3) circle (.2cm);
\fill[black!20!white] (0,3) circle (.2cm);
\draw (0,0) circle (.2cm);
\draw (3,0) circle (.2cm);
\draw (3,3) circle (.2cm);
\draw (0,3) circle (.2cm);

\fill[black!20!white] (1,1) circle (.2cm);
\fill[black!20!white] (2,2) circle (.2cm);
\draw (1,1) circle (.2cm);
\draw (2,2) circle (.2cm);

\fill[black!20!white] (-1,4) circle (.2cm);
\draw (-1,4) circle (.2cm);
\fill [black!20!white] (4,-1) circle (.2cm);
\draw (4,-1) circle (.2cm);

\draw (1.5,-2) node{$C_4(1,1,1,1)$};

\begin{scope}[xshift = 10cm]
	
	\draw (0,-1) -- (-4,1.5) -- (0,4) -- (-1,1.5) -- (0,-1) -- (1,1.5) -- (0,4) -- (4,1.5) -- (0,-1);
	\draw (-4,1.6) -- (-3,1.6);
	\draw (-4,1.4) -- (-3,1.4);
	\draw (-2,1.6) -- (-1,1.6);
	\draw (-2,1.4) -- (-1,1.4);
	\draw (4,1.6) -- (3,1.6);
	\draw (4,1.4) -- (3,1.4);
	\draw (2,1.6) -- (1,1.6);
	\draw (2,1.4) -- (1,1.4);

	\fill[black!20!white] (0,-1) circle (.2cm);
	\fill[black!20!white] (0,4) circle (.2cm);
	\draw (0,-1) circle (.2cm);
	\draw (0,4) circle (.2cm);
	
	\fill[black!20!white] (-4,1.5) circle (.2cm);
	\fill[black!20!white] (-3,1.5) circle (.2cm);
	\fill[black!20!white] (-2,1.5) circle (.2cm);
	\fill[black!20!white] (-1,1.5) circle (.2cm);
	\fill[black!20!white] (1,1.5) circle (.2cm);
	\fill[black!20!white] (2,1.5) circle (.2cm);
	\fill[black!20!white] (3,1.5) circle (.2cm);
	\fill[black!20!white] (4,1.5) circle (.2cm);
	\draw (-4, 1.5) circle (.2cm);
	\draw (-3, 1.5) circle (.2cm);
	\draw (-2, 1.5) circle (.2cm);
	\draw (-1, 1.5) circle (.2cm);
	\draw (1, 1.5) circle (.2cm);
	\draw (2, 1.5) circle (.2cm);
	\draw (3, 1.5) circle (.2cm);
	\draw (4, 1.5) circle (.2cm);
	
	\draw (0,-2) node{$\widetilde{K}_4(1,1)\oplus_2 \widetilde{K}_4(1,1)$};

\end{scope}

\end{tikzpicture}$$
\caption{\textcolor{black}{The graphs $C_4(1,1,1,1)$ and $\widetilde{K}_4(1,1)\oplus_2 \widetilde{K}_4(1,1)$.}}
\label{figure:TuraevGenusZero}
\end{figure}

\begin{lemma}
\label{lemma:zero}
Let $H$ be an alternating decomposition graph without isolated vertices such that $g_T(H)=0$ and $H$ has at most four vertices of degree two. Then $H$ is either
\begin{enumerate}
\item a disjoint union of two doubled paths,
\item a doubled tree with two, three, or four leaves,
\item $C_4(p,q,r,s)$ for non-negative integers $p$, $q$, $r$, and $s$, or
\item $\widetilde{K}_4(p,q) \oplus_2 \widetilde{K}_4(r,s)$ for non-negative integers $p$, $q$, $r$, and $s$.
\end{enumerate}
\end{lemma}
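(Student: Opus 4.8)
The plan is to run the recursive algorithm of Corollary~\ref{cor:TuraevAlgorithm} ``in reverse'' to build up $H$ from isolated vertices, using Theorem~\ref{theorem:Zero} as the starting point. Since $g_T(H)=0$, Theorem~\ref{theorem:Zero} says $H$ is obtained from a collection of isolated vertices by a sequence of doubled pendant moves, two-path extensions, and one-sums along vertices in different components. I would first observe that a doubled pendant move $G\mapsto G\oplus_1 C_2$ is exactly a doubled path extension (possibly creating a new doubled path of length one attached at a vertex), and that the two-path extension and the one-sum are the only moves that genuinely alter the ``shape'' of the graph modulo doubled path equivalence. So up to doubled path equivalence, $H$ is built from isolated vertices by one-sums (along vertices in different components) and two-path extensions. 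I would then track the effect of each move on two quantities: the number of vertices of degree two that are \emph{not} interior to a doubled path (call these the ``essential branch/leaf data''), and the underlying ``reduced skeleton'' obtained by contracting every doubled path to a single edge (or deleting a doubled pendant path entirely). The hypothesis that $H$ has at most four vertices of degree two is the key constraint: it bounds how many times these shape-altering moves can be applied.

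The main steps, in order: (1) Normalize the sequence from Theorem~\ref{theorem:Zero} so that it consists only of one-sums and two-path extensions up to doubled path equivalence, and set $H_0$ to be the result of deleting all interior-of-doubled-path vertices, i.e.\ work with the doubled path equivalence class representative in which every doubled path has minimal length. (2) Show that a two-path extension applied to a vertex $v$ replaces $v$ by two vertices $v_1,v_2$ joined by a doubled path of length one; in the minimal representative this contributes a trivalent-type branching (each of $v_1,v_2$ has odd ``external'' degree, hence degree $\geq 3$ in the doubled graph since degrees are even, so degree exactly $\dots$ — more carefully, $v_1$ and $v_2$ each pick up two new edges to the middle vertex, and the middle vertex has degree two). (3) Count: each two-path extension creates exactly one new degree-two vertex (the middle vertex of the new doubled edge) that sits interior to a doubled path, but more importantly it raises the ``complexity'' of the reduced skeleton. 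A one-sum identifies two vertices and does not create degree-two vertices but merges components. (4) Enumerate the reduced skeletons reachable: starting from isolated vertices and applying one-sums and the branching introduced by two-path extensions, with the total number of genuine degree-two vertices of the doubled graph bounded by four, the reduced skeleton (a graph in which every edge came from a doubled path) can only be a path, a tree with at most four leaves, a four-cycle with pendant paths, or the two-sum $\widetilde K_4(p,q)\oplus_2\widetilde K_4(r,s)$; anything more complicated forces a fifth vertex of degree two. Matching these skeletons against cases (1)--(4) and re-inflating the doubled paths to their actual lengths gives the statement.

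I expect the main obstacle to be Step~(4): the careful bookkeeping that shows exactly these four families (and no others) arise when the number of degree-two vertices is $\le 4$. Concretely, one must argue that a connected $H$ with $g_T(H)=0$ is, up to doubled path equivalence, a doubled graph $\Gamma^2$ where $\Gamma$ is a connected planar graph all of whose vertices have degree $1$, $3$, or $4$ (since degrees in $H$ are even and $\le 4$ forces the non-doubled-path vertices to have degree $4$ in $H$, i.e.\ degree... and the degree-two vertices of $H$ correspond to leaves or path-interiors of $\Gamma$); then a Euler-characteristic / handshake count on $\Gamma$, using that $g_T(\Gamma^2)=0$ means $\widetilde{\mathbb{G}}$ is planar, pins down $\Gamma$ to a path, a tree with $\le 4$ leaves, $C_4$ with pendant paths, or the $\widetilde K_4\oplus_2\widetilde K_4$ skeleton. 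The disconnected case reduces to: each component has $g_T=0$ and the total degree-two count is $\le 4$, and since a doubled tree with $k$ leaves has $k$ degree-two vertices (when no path is subdivided) while $C_4$-type and $\widetilde K_4\oplus_2\widetilde K_4$ already use up the budget, two components can only both be doubled paths, giving case~(1). The bipartiteness and planarity of alternating decomposition graphs, plus Proposition~\ref{prop:doubledpath}, are used throughout to justify passing to the doubled-path-minimal representative.
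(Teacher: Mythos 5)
Your overall strategy (start from Theorem~\ref{theorem:Zero} and use the budget of at most four degree-two vertices to limit the moves) is the same as the paper's, but the proposal has two concrete problems. First, your normalization in Step~(1) is false: a doubled pendant move is \emph{not} a doubled path extension and does change the graph modulo doubled path equivalence. Applying $G\mapsto G\oplus_1 C_2$ at a vertex of degree $\geq 4$ creates a new branch (a new leaf of the underlying tree), and doubled path contractions/extensions can never remove or create such a branch; e.g.\ a doubled star with three leaves is not doubled path equivalent to a doubled path. So you cannot reduce to ``one-sums and two-path extensions only,'' and the branching data you discard is exactly what distinguishes the doubled trees in case~(2). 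Relatedly, your structural claim in Step~(4) that $H$ is, up to doubled path equivalence, a doubled graph $\Gamma^2$ is wrong for cases (3) and (4): $C_4(p,q,r,s)$ and $\widetilde K_4(p,q)\oplus_2\widetilde K_4(r,s)$ contain single (undoubled) edges coming from the two-path extension, so no skeleton of the form $\Gamma^2$ captures them.

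Second, and more seriously, the decisive enumeration --- why only the $C_4(p,q,r,s)$ and $\widetilde K_4(p,q)\oplus_2\widetilde K_4(r,s)$ families can arise --- is precisely the step you defer as ``the main obstacle,'' and the Euler-characteristic/handshake count you sketch does not supply it. What the paper actually uses is a counting fact you never state: a doubled tree with a vertex of degree $2d$ already has at least $d$ vertices of degree two, and a two-path extension strictly increases the number of degree-two vertices. Together with the budget of four, this forces any two-path extension to be applied at a vertex of degree $2$, $4$, or $6$ of a doubled tree (in the degree-$6$ case with the edge partition split $3+3$), and it forces the graph at that moment to be a doubled path or a doubled tree with three leaves; the dichotomy $|\mathcal{N}(A)|=|\mathcal{N}(B)|=2$ versus $3$ then produces $C_4$-type versus $\widetilde K_4\oplus_2\widetilde K_4$-type, after which only parameter-changing pendant moves at degree-two vertices remain possible. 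Without this argument (or a genuine substitute), your Step~(4) is an assertion of the conclusion rather than a proof, so the proposal as written has a gap at the heart of the lemma.
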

\begin{proof}
Each of the above graphs clearly has four or fewer vertices of degree two, and the algorithm of Corollary \ref{cor:TuraevAlgorithm} implies that each of the above graphs is indeed Turaev genus zero. It remains to show that the above list is exhaustive.

Theorem \ref{theorem:Zero} states that every Turaev genus zero alternating decomposition graph can be obtained from a collection of isolated vertices via a sequence of doubled pendant moves, two-path extensions, and one-sums of vertices in distinct components. If $H$ is obtained from a collection of isolated vertices via a sequence of doubled pendant moves and one-sums from distinct components, then $H$ is a doubled forest. Since $H$ has four or fewer vertices of degree two and no isolated vertices, $H$ is either a disjoint union of two doubled paths or a doubled tree with two, three, or four leaves.

If a doubled tree $H$ has a vertex of degree $2d$ for some positive integer $d$, then $H$ contains at least $d$ vertices of degree two. A two-path extension always increases the number of degree two vertices in the graph. Therefore, we can only apply a two-path extension to a vertex of degree two, four, or six. Let $H'$ be obtained from the doubled tree $H$ via a two-path extension applied at a vertex $v$ where the set of edges incident to $v$ is partitioned into sets $A$ and $B$ of odd order, as in Figure \ref{figure:two-path}. Without loss of generality, assume $|A|\geq |B|$.

If the degree of $v$ is two, then $|A|=|B|=1$. Therefore, a two-path extension will add two new vertices of degree two. Hence $H$ must be a doubled path, and $H'$ is $C_4(p,0,0,0)$ for some $p$. If the degree of $v$ is four, then $|A|=3$ and $|B|=1$. A two-path extension will again add two vertices of degree two, and hence $H$ must be a doubled path. Thus $H'$ is $C_4(p,q,0,0)$ for some $p$ and $q$. 

If the degree of $v$ is six, then $H$ already has at least three vertices of degree two. If $|A|=5$ and $|B|=1$, then a two-path extension would create two new vertices of degree two, resulting in at least five vertices of degree two. Therefore $|A|=3$ and $|B|=3$, and $H$ is a doubled tree with three degree two vertices. Let $\mathcal{N}(A)$ (respectively $\mathcal{N}(B)$) be the set of vertices adjacent to $v$ and incident to an edge in $A$ (respectively $B$). There are two cases: either $|\mathcal{N}(A)| = |\mathcal{N}(B)| = 2$ or $|\mathcal{N}(A)| = |\mathcal{N}(B)| = 3$. If $|\mathcal{N}(A)| = |\mathcal{N}(B)| = 2$, then $H'= C_4(p,0,r,0)$ for some $p$ and $r$. If $|\mathcal{N}(A)| = |\mathcal{N}(B)| = 3$, then $H'=\widetilde{K}_4(p,0)\oplus_2\widetilde{K}_4(r,s)$ for some $p$, $r$, and $s$.

In each of the above instances, $H'$ already has four vertices of degree two. Thus the only allowable operation is a doubled pendant move applied to a vertex that is already of degree two. Alternately, one could take a one-sum between $H'$ and a doubled path that identifies two degree two vertices. However, this is the same as a doubled pendant move applied to a vertex of degree two. The only effect his has is changing the parameters in $C_4(p,q,r,s)$ or $\widetilde{K}_4(p,q)\oplus_2\widetilde{K}_4(r,s)$, and hence the result holds.
\end{proof}
Figure \ref{figure:two-pathlemma} shows examples of a two-path extension being applied to a doubled tree with two or three vertices of degree two.

The previous classification of alternating decomposition graphs of Turaev genus zero with at most four vertices of degree two leads directly to the classification reduced alternating decomposition graphs of Turaev genus one and two.

\begin{figure}[h]
$$\begin{tikzpicture}[thick, scale=.8]


\draw (0,.2) -- (4.5,.2);
\draw (0,-.2) -- (4.5, -.2);

\draw [red] (4,0) -- (5,0);

\fill[black!20!white] (0,0) circle (.3cm);
\draw (0,0) circle (.3cm);
\draw (0,0) node {$u_1$};

\fill[black!20!white] (1.5,0) circle (.3cm);
\draw (1.5,0) circle (.3cm);
\draw (1.5,0) node {$u_2$};

\fill[black!20!white] (3,0) circle (.3cm);
\draw (3,0) circle (.3cm);
\draw (3,0) node {$u_3$};

\fill[black!20!white] (4.5,0) circle (.3cm);
\draw (4.5,0) circle (.3cm);
\draw (4.5,0) node {$v$};

\draw[very thick, ->] (6,0) -- (7,0);

\begin{scope}[xshift=8.5cm]
\draw (0,.2) -- (3,.2);
\draw (0,-.2) -- (3, -.2);
\draw (3,0) rectangle (4.5,-1.5);

\fill[black!20!white] (0,0) circle (.3cm);
\draw (0,0) circle (.3cm);
\draw (0,0) node {$u_1$};

\fill[black!20!white] (1.5,0) circle (.3cm);
\draw (1.5,0) circle (.3cm);
\draw (1.5,0) node {$u_2$};

\fill[black!20!white] (3,0) circle (.3cm);
\draw (3,0) circle (.3cm);
\draw (3,0) node {$u_3$};

\fill[black!20!white] (4.5,0) circle (.3cm);
\draw (4.5,0) circle (.3cm);
\draw (4.5,0) node {$v_2$};

\fill[black!20!white] (4.5,-1.5) circle (.3cm);
\draw (4.5,-1.5) circle (.3cm);
\draw (4.5,-1.5) node {$v_3$};

\fill[black!20!white] (3,-1.5) circle (.3cm);
\draw (3,-1.5) circle (.3cm);
\draw (3,-1.5) node {$v_1$};

\end{scope}


\begin{scope}[yshift = -2.5cm]

\draw (0,.2) -- (4.5,.2);
\draw (0,-.2) -- (4.5, -.2);

\draw [red] (2,0) -- (1.5,0) -- (1.5,-.5);

\fill[black!20!white] (0,0) circle (.3cm);
\draw (0,0) circle (.3cm);
\draw (0,0) node {$u_1$};

\fill[black!20!white] (1.5,0) circle (.3cm);
\draw (1.5,0) circle (.3cm);
\draw (1.5,0) node {$v$};

\fill[black!20!white] (3,0) circle (.3cm);
\draw (3,0) circle (.3cm);
\draw (3,0) node {$u_2$};

\fill[black!20!white] (4.5,0) circle (.3cm);
\draw (4.5,0) circle (.3cm);
\draw (4.5,0) node {$u_3$};

\draw[very thick, ->] (6,0) -- (7,0);

\begin{scope}[xshift=8.5cm]
\draw (0,.2) -- (1.5,.2);
\draw (0,-.2) -- (1.5,-.2);
\draw (1.5,0) rectangle (3,-1.5);
\draw (3,.2) -- (4.5,.2);
\draw (3,-.2) -- (4.5,-.2);

\fill[black!20!white] (0,0) circle (.3cm);
\draw (0,0) circle (.3cm);
\draw (0,0) node {$u_1$};

\fill[black!20!white] (1.5,0) circle (.3cm);
\draw (1.5,0) circle (.3cm);
\draw (1.5,0) node {$v_1$};

\fill[black!20!white] (3,0) circle (.3cm);
\draw (3,0) circle (.3cm);
\draw (3,0) node {$u_2$};

\fill[black!20!white] (4.5,0) circle (.3cm);
\draw (4.5,0) circle (.3cm);
\draw (4.5,0) node {$u_3$};

\fill[black!20!white] (1.5,-1.5) circle (.3cm);
\draw (1.5,-1.5) circle (.3cm);
\draw (1.5,-1.5) node {$v_3$};

\fill[black!20!white] (3,-1.5) circle (.3cm);
\draw (3,-1.5) circle (.3cm);
\draw (3,-1.5) node {$v_2$};
\end{scope}

\end{scope}

\begin{scope}[yshift=-5cm]

\draw (.75,.2) -- (3.75,.2);
\draw (.75,-.2) -- (3.75,-.2);
\draw (2.05,0) -- (2.05, -1.5);
\draw (2.45,0) -- (2.45, -1.5);

\draw [red] (1.75,0) -- (2.25,0) -- (2.64,-.354);

\fill[black!20!white] (.75,0) circle (.3cm);
\draw (.75,0) circle (.3cm);
\draw (.75,0) node{$u_1$};

\fill[black!20!white] (2.25,0) circle (.3cm);
\draw (2.25,0) circle (.3cm);
\draw (2.25,0) node{$v$};

\fill[black!20!white] (3.75,0) circle (.3cm);
\draw (3.75,0) circle (.3cm);
\draw (3.75,0) node{$u_2$};

\fill[black!20!white] (2.25,-1.5) circle (.3cm);
\draw (2.25,-1.5) circle (.3cm);
\draw (2.25,-1.5) node{$u_3$};

\draw[very thick, ->] (6,0) -- (7,0);

\begin{scope}[xshift=8.5cm]

\draw (.75,0) rectangle (2.25,-1.5);
\draw (2.25,.2) -- (3.75,.2);
\draw (2.25,-.2) -- (3.75,-.2);
\draw (.55,-1.5) -- (.55,-3);
\draw (.95,-1.5) -- (.95,-3);

\fill[black!20!white] (.75,0) circle (.3cm);
\draw (.75,0) circle (.3cm);
\draw (.75,0) node{$u_1$};

\fill[black!20!white] (2.25,0) circle (.3cm);
\draw (2.25,0) circle (.3cm);
\draw (2.25,0) node{$v_2$};

\fill[black!20!white] (3.75,0) circle (.3cm);
\draw (3.75,0) circle (.3cm);
\draw (3.75,0) node{$u_2$};

\fill[black!20!white] (.75,-1.5) circle (.3cm);
\draw (.75,-1.5) circle (.3cm);
\draw (.75,-1.5) node{$v_1$};

\fill[black!20!white] (.75,-3) circle (.3cm);
\draw (.75,-3) circle (.3cm);
\draw (.75,-3) node{$u_3$};

\fill[black!20!white] (2.25,-1.5) circle (.3cm);
\draw (2.25,-1.5) circle (.3cm);
\draw (2.25,-1.5) node{$v_3$};

\end{scope}

\end{scope}

\begin{scope}[yshift=-9cm]

\draw (0,.2) -- (3,.2);
\draw (0,-.2) -- (3,-.2);
\draw [bend left] (1.5,.2) edge (4.5,.2);
\draw [bend right] (1.5,-.2) edge (4.5,-.2);
\draw[red] (1,0) -- (2,0);

\fill[black!20!white] (0,0) circle (.3cm);
\draw (0,0) circle (.3cm);
\draw (0,0) node {$u_1$};

\fill[black!20!white] (1.5,0) circle (.3cm);
\draw (1.5,0) circle (.3cm);
\draw (1.5,0) node {$v$};

\fill[black!20!white] (3,0) circle (.3cm);
\draw (3,0) circle (.3cm);
\draw (3,0) node {$u_2$};

\fill[black!20!white] (4.5,0) circle (.3cm);
\draw (4.5,0) circle (.3cm);
\draw (4.5,0) node {$u_3$};

\draw[very thick, ->] (6,0) -- (7,0);

\begin{scope}[xshift=8.5cm]

\draw (2.25,0) -- (0,-1) -- (2.25,-2) -- (1.5,-1) -- (2.25,0) -- (3,-1) -- (2.25,-2) -- (4.5,-1) -- (2.25,0);

\fill[black!20!white] (2.25,0) circle (.3cm);
\draw (2.25,0) circle (.3cm);
\draw (2.25,0) node{$v_1$};

\fill[black!20!white] (0,-1) circle (.3cm);
\draw (0,-1) circle (.3cm);
\draw (0,-1) node {$u_1$};

\fill[black!20!white] (1.5,-1) circle (.3cm);
\draw (1.5,-1) circle (.3cm);
\draw (1.5,-1) node {$v_3$};

\fill[black!20!white] (3,-1) circle (.3cm);
\draw (3,-1) circle (.3cm);
\draw (3,-1) node {$u_2$};

\fill[black!20!white] (4.5,-1) circle (.3cm);
\draw (4.5,-1) circle (.3cm);
\draw (4.5,-1) node {$u_3$};

\fill[black!20!white] (2.25,-2) circle (.3cm);
\draw (2.25,-2) circle (.3cm);
\draw (2.25,-2) node{$v_2$};

\end{scope}
\end{scope}

\end{tikzpicture}$$
\caption{Applying two-path extensions to doubled trees. The red line denotes the partition of the edges incident to $v$ into the sets $A$ and $B$.}
\label{figure:two-pathlemma}
\end{figure}
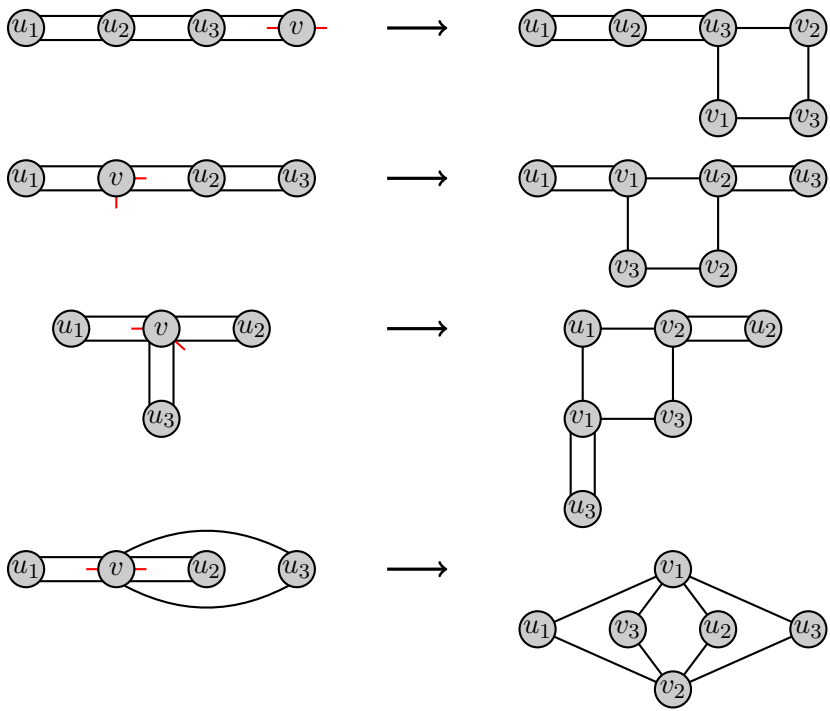

\begin{proof}[Proof of Theorem \ref{thm:genus1}]
If $G$ is a doubled cycle of even length, then it is reduced and Corollary \ref{cor:TuraevAlgorithm} implies that $g_T(G)=1$.

Let $G$ be a reduced alternating decomposition graph with $g_T(G)=1$. Lemma \ref{lemma:degree2} implies $G$ contains a pair of parallel edges $\{e_1,e_2\}$. Let $G'=G-\{e_1,e_2\}$. Since $G$ is reduced $k(G')=k(G)$ and thus $g_T(G')=0$. Because $G$ has no vertices of degree two, it follows that $G'$ has at most two vertices of degree two. Lemma \ref{lemma:zero} implies that $G'$ is a doubled path. Therefore $G$ is a doubled cycle of even length.
\end{proof}

Suppose $L$ is a link containing a two-tangle $T$ inside the ball $B$. A {\em mutation} of $L$ is a link $L'$ obtained by removing the ball $B$, rotating it $180^\circ$ about any of its principle axes, and gluing $B$ back into the link. Two links that are related by a sequence of mutations are said to be mutants of one another.

\begin{proof}[Proof of Corollary \ref{corollary:mutant}]
Since $L$ is Turaev genus one, it has a diagram $D$ as in Figure \ref{fig:genus1}. The alternating decomposition graph of this diagram is $C_{2k}^2$, a doubled cycle of length $2k$. Let $T$ be a the tangle consisting of $T_{i}$ and $T_{i+1}$. Rotating the tangle $T$ by $180^\circ$ in the plane of the diagram results in a new diagram whose alternating decomposition graph is $C_{2k-2}^2$, a doubled cycle of length $2k-2$. See Figure \ref{figure:mutation}. Therefore, through a sequence of mutations, the diagram $D$ can be transformed into a diagram whose alternating decomposition graph is $C_2^2$.
\begin{figure}[h]
$$\begin{tikzpicture}
\draw [bend left] (-2, .8) edge (0,.5);
\draw [bend right] (-2,-.8) edge (0,-.5);
\draw [bend left] (0,.5) edge (3,.5);
\draw [bend right] (0,-.5) edge (3, -.5);
\draw [bend left] (3,.5) edge (6,.5);
\draw [bend right] (3,-.5) edge (6, -.5);
\draw [bend left] (6,.5) edge (9,.5);
\draw [bend right] (6,-.5) edge (9, -.5);
\draw [bend left] (9,.5) edge (11,.8);
\draw [bend right] (9,-.5) edge (11,-.8);

\draw (-1.5,1.2) node{$-$};
\draw (1.5,1.2) node{$+$};
\draw (4.5,1.2) node{$-$};
\draw (7.5,1.2) node{$+$};
\draw (10.5,1.2) node{$-$};

\draw (-1.5,-1.2) node{$+$};
\draw (1.5,-1.2) node{$-$};
\draw (4.5,-1.2) node{$+$};
\draw (7.5,-1.2) node{$-$};
\draw (10.5,-1.2) node{$+$};

\fill[white] (0,0) circle (1cm);
\draw (0,0) node {$T_1$};
\draw (0,0) circle (1cm);
\fill[white] (3,0) circle (1cm);
\draw (3,0) node {$T_2$};
\draw (3,0) circle (1cm);
\fill[white] (6,0) circle (1cm);
\draw (6,0) node {$T_3$};
\draw (6,0) circle (1cm);
\fill[white](9,0) circle (1cm);
\draw (9,0) node {$T_4$};
\draw (9,0) circle (1cm);

\draw[blue, dashed] (4.5,0) ellipse (3 cm and 1.5 cm);

\begin{scope}[yshift = -4cm]
\draw [bend left] (-2, .8) edge (0,.5);
\draw [bend right] (-2,-.8) edge (0,-.5);
\draw [bend left] (0,.5) edge (3,.5);
\draw [bend right] (0,-.5) edge (3, -.5);
\draw [bend left] (3,.5) edge (6,.5);
\draw [bend right] (3,-.5) edge (6, -.5);
\draw [bend left] (6,.5) edge (9,.5);
\draw [bend right] (6,-.5) edge (9, -.5);
\draw [bend left] (9,.5) edge (11,.8);
\draw [bend right] (9,-.5) edge (11,-.8);

\draw (-1.5,1.2) node{$-$};
\draw (4.5,1.2) node{$+$};
\draw (10.5,1.2) node{$-$};

\draw (-1.5,-1.2) node{$+$};
\draw (4.5,-1.2) node{$-$};
\draw (10.5,-1.2) node{$+$};

\fill[white] (0,0) circle (1cm);
\draw (0,0) node {$T_1$};
\draw (0,0) circle (1cm);
\fill[white] (3,0) circle (1cm);
\draw (3,0) node {$\rotatebox[origin=c]{180}{$T_3$}$};
\draw (3,0) circle (1cm);
\fill[white] (6,0) circle (1cm);
\draw (6,0) node {$\rotatebox[origin=c]{180}{$T_2$}$};
\draw (6,0) circle (1cm);
\fill[white](9,0) circle (1cm);
\draw (9,0) node {$T_4$};
\draw (9,0) circle (1cm);

\draw[blue, dashed] (4.5,0) ellipse (3 cm and 1.5 cm);
\end{scope}

\end{tikzpicture}$$
\caption{The $2$-tangle in the upper diagram is rotated $180^\circ$ to obtain the lower diagram. In the lower diagram, the $2$-tangle containing $T_1$ and $T_3$ and the $2$-tangle containing $T_2$ and $T_4$ are alternating.}
\label{figure:mutation}
\end{figure}
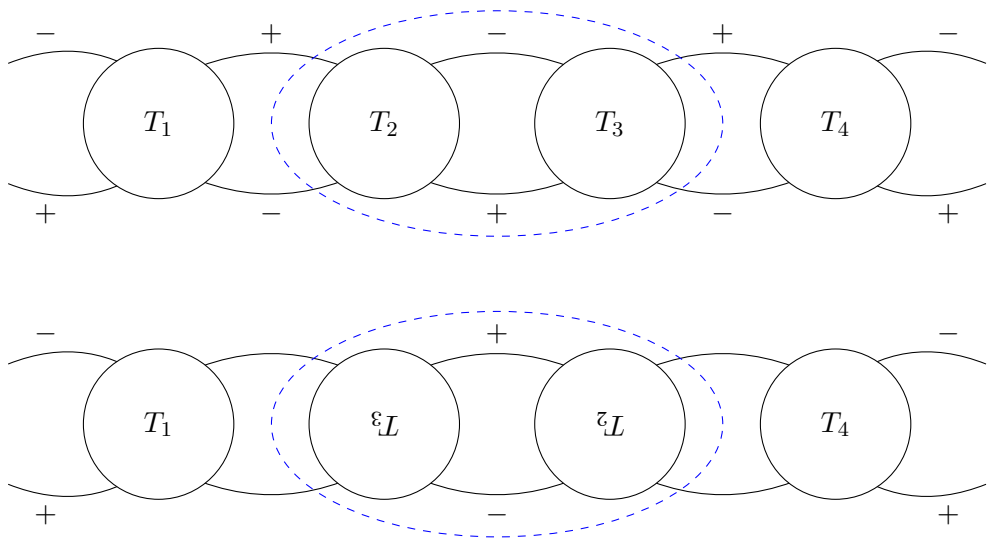

It remains to show that any diagram $D'$ with alternating decomposition graph $C_2^2$ is an almost-alternating link. We may assume that $D'$ consists of two alternating $2$-tangles $T_1$ and $T_2$ connected together by four non-alternating edges. If one of those non-alternating edges is pulled over the tangle $T_1$ as in Figure \ref{figure:almostalternating}, then the resulting diagram is almost-alternating.

\begin{figure}[h]
$$\begin{tikzpicture}
\draw (0,.25) -- (4,.25);
\draw (0,.75) -- (4,.75);
\draw (0,-.25) -- (4,-.25);
\draw (0,-.75) -- (4,-.75);

\fill[white] (0,0) circle (1cm);
\draw (0,0) node {$T_1$};
\draw (0,0) circle (1cm);
\fill[white] (4,0) circle (1cm);
\draw (4,0) node{$T_2$};
\draw (4,0) circle (1cm);

\draw (.3,.75) node{$-$};
\draw (.6,.25) node{$+$};
\draw (.6,-.25) node{$-$};
\draw (.3,-.75) node{$+$};

\draw (3.7,.75) node{$-$};
\draw (3.4, .25) node{$+$};
\draw (3.4, -.25) node{$-$};
\draw (3.7,-.75) node{$+$};

\begin{scope}[xshift = 7cm]

\draw [rounded corners = 2.5mm] (0,.75) -- (1.5,.75) -- (1.5,1.25) -- (-1.25,1.25) -- (-1.25, -1.25) -- (2, -1.25) -- (2,.75) -- (4,.75);

\draw (0,.25) -- (1.9,.25);
\draw (2.1,.25) -- (4,.25);
\draw (0,-.25) -- (1.9,-.25);
\draw (2.1,.-.25) -- (4,-.25);
\draw (0,-.75) -- (1.9,-.75);
\draw (2.1,-.75) -- (4,-.75);

\fill[white] (0,0) circle (1cm);
\draw (0,0) node {$T_1$};
\draw (0,0) circle (1cm);
\fill[white] (4,0) circle (1cm);
\draw (4,0) node{$T_2$};
\draw (4,0) circle (1cm);

\draw (.3,.75) node{$-$};
\draw (.6,.25) node{$+$};
\draw (.6,-.25) node{$-$};
\draw (.3,-.75) node{$+$};

\draw (3.7,.75) node{$-$};
\draw (3.4, .25) node{$+$};
\draw (3.4, -.25) node{$-$};
\draw (3.7,-.75) node{$+$};

\draw [very thick, red] (2,-.25) circle (.3cm);

\end{scope}

\end{tikzpicture}$$
\caption{A diagram with alternating decomposition graph $C_2^2$ is transformed into an almost-alternating diagram by pulling one of the non-alternating edges over one of the tangles. If the crossing inside the red circle is changed, then the diagram will be alternating.}
\label{figure:almostalternating}
\end{figure}
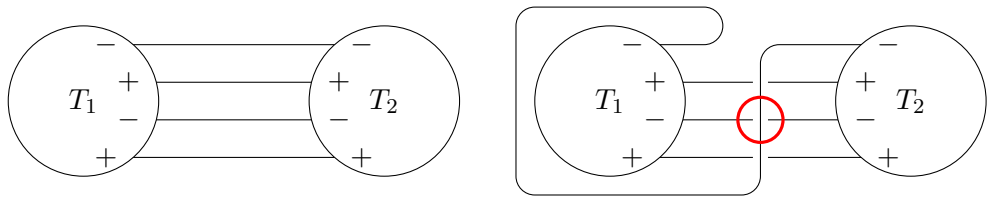
\end{proof}

Many Turaev genus one links are known to be almost-alternating. Kim and Lee \cite{KimLee:Pretzel} show that non-alternating, three-stranded pretzel links are almost-alternating. If each tangle $T_i$ in Figure \ref{fig:genus1} is a rational tangle, then the link $L$ is called a {\em Montesinos link}. In an appendix to \cite{AbeKish:Dealternating}, Jong shows that non-alternating Montesinos links are almost-alternating. Non-alternating Montesinos links include non-alternating pretzel links on arbitrarily many strands. The manipulation of Figure \ref{figure:almostalternating} is a key step in Jong's work. All almost-alternating links are Turaev genus one, but it remains open whether all Turaev genus one links are almost-alternating.

\begin{proof}[Proof of Theorem \ref{thm:genus2}]
Suppose $G\in \{C_2^2\sqcup C_2^2, C_2^2\oplus_1 C_2^2, C^2_{1,1,1}, K_4(2,2), K_4(2)\oplus_2 K_4(2)\}$. Corollary \ref{cor:TuraevAlgorithm} implies that $g_T(G)=2$. Proposition \ref{prop:doubledpath} implies that any alternating decomposition graph that is doubled path equivalent to $G$ also has Turaev genus two. 

Let $G$ be a reduced alternating decomposition graph with $g_T(G)=2$. Since $G$ is reduced and $g_T(G)=2$, it follows that $G$ contains a pair of parallel edges $\{e_1,e_2\}$ such that  $g_T(G')=1$ where $G'=G-\{e_1,e_2\}$. The graph $G'$ has at most two vertices of degree two. Lemma \ref{lemma:degree2} implies that $G'$ contains at least one pair of parallel edges. If the deletion of every pair of parallel edges in $G'$ increased the number of components of $G'$, then every pair could be contracted to obtain the graph $\widetilde{G}'$. Then $g_T(\widetilde{G}')=g_T(G') = 1$, and the graph $\widetilde{G}'$ has at most two vertices of degree two and no pairs of parallel edges. Hence Lemma \ref{lemma:degree2} implies $\widetilde{G}'$ has no edges, which contradicts $g_T(\widetilde{G}')=1$. Thus $G'$ contains a pair of parallel edges $\{e_3,e_4\}$ such that their deletion results in a graph with no more components.

Let $G''=G-\{e_1,e_2,e_3,e_4\}$. Since $G''$ is an alternating decomposition graph of Turaev genus zero with at most four vertices of degree two, it is one of the graphs in Lemma \ref{lemma:zero}. It remains to show that if $G''$ is one of the graphs in Lemma \ref{lemma:zero}, $G$ can be obtained from $G''$ by adding two pairs of parallel edges, and $G$ is a reduced alternating decomposition graph of Turaev genus two, then $G$ is doubled path equivalent to one of the five graphs in the statement of the theorem.

Suppose that $G''$ is a disjoint union of two doubled paths. Then $G''$ has four vertices $v_1$, $v_2$, $v_3$, and $v_4$ of degree two, and thus each pair of parallel edges added to $G''$ must connect two of the degree two vertices. There are two ways to add these parallel edges, one that results in a disjoint union of two doubled cycles and the other that results in a single doubled cycle. However, a doubled cycle only has Turaev genus one, and so $G$ must be a disjoint union of two doubled cycles, i.e. $G$ is doubled path equivalent to $C_2^2\sqcup C_2^2$. See Figure \ref{figure:genus2.1}.
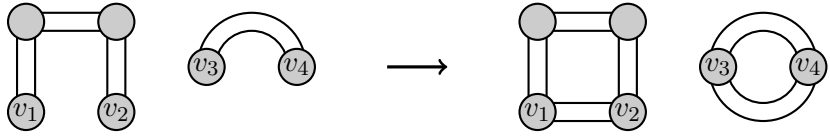
\begin{figure}[h]
$$\begin{tikzpicture}[thick, scale = .8]

\draw (-.15,0) -- (-.15,1.65) -- (1.65,1.65) -- (1.65,0);
\draw (.15,0) -- (.15,1.35) -- (1.35,1.35) -- (1.35,0);

\fill[black!20!white] (0,0) circle (.3cm);
\fill[black!20!white] (0,1.5) circle (.3cm);
\fill[black!20!white] (1.5,1.5) circle (.3cm);
\fill[black!20!white] (1.5,0) circle (.3cm);
\draw (0,0) circle (.3cm);
\draw (0,1.5) circle (.3cm);
\draw (1.5,1.5) circle (.3cm);
\draw (1.5,0) circle (.3cm);
\draw (0,0) node{$v_1$};
\draw (1.5,0) node{$v_2$};

\draw (2.85,.75) arc (180:0:.9cm);
\draw (3.15,.75) arc (180:0:.6cm);

\fill[black!20!white] (3,.75) circle (.3cm);
\fill[black!20!white] (4.5,.75) circle (.3cm);
\draw (3,.75) circle (.3cm);
\draw (4.5,.75) circle (.3cm);
\draw (3,.75) node{$v_3$};
\draw (4.5,.75) node{$v_4$};

\draw[very thick, ->] (6,.75) -- (7,.75);

\begin{scope}[xshift = 8.5cm]
\draw (-.15,-.15) rectangle (1.65,1.65);
\draw (.15,.15) rectangle (1.35,1.35);

\fill[black!20!white] (0,0) circle (.3cm);
\fill[black!20!white] (0,1.5) circle (.3cm);
\fill[black!20!white] (1.5,1.5) circle (.3cm);
\fill[black!20!white] (1.5,0) circle (.3cm);
\draw (0,0) circle (.3cm);
\draw (0,1.5) circle (.3cm);
\draw (1.5,1.5) circle (.3cm);
\draw (1.5,0) circle (.3cm);
\draw (0,0) node{$v_1$};
\draw (1.5,0) node{$v_2$};

\draw (3.75,.75) circle (.9cm);
\draw (3.75,.75) circle (.6cm);

\fill[black!20!white] (3,.75) circle (.3cm);
\fill[black!20!white] (4.5,.75) circle (.3cm);
\draw (3,.75) circle (.3cm);
\draw (4.5,.75) circle (.3cm);
\draw (3,.75) node{$v_3$};
\draw (4.5,.75) node{$v_4$};
\end{scope}
\end{tikzpicture}$$
\caption{If $G''$ is a disjoint union of two doubled paths, then $G$ is a disjoint union of two doubled cycles of even length.}
\label{figure:genus2.1}
\end{figure}

Suppose that $G''$ is a doubled path where $v_1$ and $v_2$ are its degree two vertices. If one adds a pair of parallel edges connecting $v_1$ and $v_2$, then adds a pair of parallel edges anywhere else to obtain $G$, then $G$ is doubled path equivalent to $C_{1,1,1}^2$. If one adds a pair of parallel edges connecting $v_1$ and some other vertex $u_1$ and a pair of parallel edges connecting $v_2$ and some other vertex $u_2$ to obtain $G$, then there are three possibilities for $G$. If $u_1$ is between $v_1$ and $u_2$, then $G$ is not reduced. If $u_1=u_2$, then $G$ is doubled path equivalent to $C_2^2\oplus_1 C_2^2$. If $u_2$ is between $v_1$ and $u_1$, then $G$ is doubled path equivalent to $C_{1,1,1}^2$. See Figure \ref{figure:genus2.2}.
\begin{figure}[h]
$$\begin{tikzpicture}[scale=.8,thick]
\begin{scope}[yshift=-4cm]
\begin{scope}[xshift = .5cm]
\draw (1,.2) -- (0,1.7);
\draw (1,-.2) -- (0,1.3);
\draw (1,2.8) -- (0,1.3);
\draw (1,3.2) -- (0,1.7);
\draw (1,3.1) -- (2.5,3.1);
\draw (1,2.9) -- (2.5,2.9);
\draw (2.5,3.2) -- (3.5,1.7);
\draw (2.5,2.8) -- (3.5,1.3);
\draw (3.5,1.7) -- (2.5,.2);
\draw (3.5,1.3) -- (2.5,-.2);

\fill[black!20!white] (0,1.5) circle (.3cm);
\fill[black!20!white] (1,0) circle (.3cm);
\fill[black!20!white] (1,3) circle (.3cm);
\fill[black!20!white] (2.5,0) circle (.3cm);
\fill[black!20!white] (2.5,3) circle (.3cm);
\fill[black!20!white] (3.5,1.5) circle (.3cm);

\draw (0,1.5) circle (.3cm);
\draw (1,0) circle (.3cm);
\draw (1,3) circle (.3cm);
\draw (2.5,0) circle (.3cm);
\draw (2.5,3) circle (.3cm);
\draw (3.5,1.5) circle (.3cm);

\draw (1,0) node{$v_1$};
\draw (2.5,0) node{$v_2$};

\end{scope}

\draw[very thick, ->] (6,.75) -- (7,.75);

\begin{scope}[xshift = 9cm]

\draw (1,.2) -- (0,1.7);
\draw (1,-.2) -- (0,1.3);
\draw (1,2.8) -- (0,1.3);
\draw (1,3.2) -- (0,1.7);
\draw (1,3.1) -- (2.5,3.1);
\draw (1,2.9) -- (2.5,2.9);
\draw (2.5,3.2) -- (3.5,1.7);
\draw (2.5,2.8) -- (3.5,1.3);
\draw (3.5,1.7) -- (2.5,.2);
\draw (3.5,1.3) -- (2.5,-.2);
\draw (1,.1) -- (2.5,.1);
\draw (1,-.1) -- (2.5,-.1);
\draw (0,1.6) -- (3.5,1.6);
\draw (0,1.4) -- (3.5,1.4);

\fill[black!20!white] (0,1.5) circle (.3cm);
\fill[black!20!white] (1,0) circle (.3cm);
\fill[black!20!white] (1,3) circle (.3cm);
\fill[black!20!white] (2.5,0) circle (.3cm);
\fill[black!20!white] (2.5,3) circle (.3cm);
\fill[black!20!white] (3.5,1.5) circle (.3cm);

\draw (0,1.5) circle (.3cm);
\draw (1,0) circle (.3cm);
\draw (1,3) circle (.3cm);
\draw (2.5,0) circle (.3cm);
\draw (2.5,3) circle (.3cm);
\draw (3.5,1.5) circle (.3cm);

\draw (1,0) node{$v_1$};
\draw (2.5,0) node{$v_2$};
\end{scope}

\end{scope}

\begin{scope}[yshift=-8cm]

\draw (1,3.2) -- (0,1.7) -- (1,.2) -- (2,1.7) -- (3,.2) -- (4,1.7) -- (3,3.2);
\draw (1,2.8) -- (0,1.3) -- (1,-.2) -- (2,1.3) -- (3,-.2) -- (4,1.3) -- (3,2.8);

\fill[black!20!white] (0,1.5) circle (.3cm);
\fill[black!20!white] (1,0) circle (.3cm);
\fill[black!20!white] (1,3) circle (.3cm);
\fill[black!20!white] (2,1.5) circle (.3cm);
\fill[black!20!white] (3,0) circle (.3cm);
\fill[black!20!white] (3,3) circle (.3cm);
\fill[black!20!white] (4,1.5) circle (.3cm);

\draw (0,1.5) circle (.3cm);
\draw (1,0) circle (.3cm);
\draw (1,3) circle (.3cm);
\draw (2,1.5) circle (.3cm);
\draw (3,0) circle (.3cm);
\draw (3,3) circle (.3cm);
\draw (4,1.5) circle (.3cm);

\draw (1,3) node{$v_1$};
\draw (3,3) node{$v_2$};
\draw (2,1.5) node{$u_1$};

\draw[very thick, ->] (6,.75) -- (7,.75);

\begin{scope}[xshift=9cm]
\draw (1,3.2) -- (0,1.7) -- (1,.2) -- (2,1.7) -- (3,.2) -- (4,1.7) -- (3,3.2) -- (2,1.7) -- (1,3.2);
\draw (1,2.8) -- (0,1.3) -- (1,-.2) -- (2,1.3) -- (3,-.2) -- (4,1.3) -- (3,2.8) -- (2,1.3) -- (1,2.8);

\fill[black!20!white] (0,1.5) circle (.3cm);
\fill[black!20!white] (1,0) circle (.3cm);
\fill[black!20!white] (1,3) circle (.3cm);
\fill[black!20!white] (2,1.5) circle (.3cm);
\fill[black!20!white] (3,0) circle (.3cm);
\fill[black!20!white] (3,3) circle (.3cm);
\fill[black!20!white] (4,1.5) circle (.3cm);

\draw (0,1.5) circle (.3cm);
\draw (1,0) circle (.3cm);
\draw (1,3) circle (.3cm);
\draw (2,1.5) circle (.3cm);
\draw (3,0) circle (.3cm);
\draw (3,3) circle (.3cm);
\draw (4,1.5) circle (.3cm);

\draw (1,3) node{$v_1$};
\draw (3,3) node{$v_2$};
\draw (2,1.5) node{$u_1$};
\end{scope}

\end{scope}


\begin{scope}[yshift=-11cm, xshift = 1cm]
\draw (-.1,1.5) -- (-.1,-.1) -- (1.6,-.1) -- (1.6,1.4) -- (2.9,1.4) -- (2.9,.1);
\draw (.1,1.5) -- (.1,.1) -- (1.4,.1) -- (1.4,1.6) -- (3.1,1.6) -- (3.1,-.1);

\fill[black!20!white] (0,0) circle (.3cm);
\fill[black!20!white] (1.5,0) circle (.3cm);
\fill[black!20!white] (0,1.5) circle (.3cm);
\fill[black!20!white] (1.5,1.5) circle (.3cm);
\fill[black!20!white] (3,0) circle (.3cm);
\fill[black!20!white] (3,1.5) circle (.3cm);

\draw (0,0) circle (.3cm);
\draw (1.5,0) circle (.3cm);
\draw (0,1.5) circle (.3cm);
\draw (1.5,1.5) circle (.3cm);
\draw (3,0) circle (.3cm);
\draw (3,1.5) circle (.3cm);

\draw (0,1.5) node{$v_1$};
\draw (1.5,1.5) node{$u_1$};
\draw (1.5,0) node{$u_2$};
\draw (3,0) node{$v_2$};

\draw[very thick, ->] (5,.75) -- (6,.75);

\begin{scope}[xshift = 8 cm]
\draw (1.5,1.6) -- (-.1,1.6) -- (-.1,-.1) -- (1.6,-.1) -- (1.6,1.4) -- (2.9,1.4) -- (2.9,.1) -- (1.5,.1);
\draw (1.5,1.4) -- (.1,1.4) -- (.1,.1) -- (1.4,.1) -- (1.4,1.6) -- (3.1,1.6) -- (3.1,-.1) -- (1.5,-.1);

\fill[black!20!white] (0,0) circle (.3cm);
\fill[black!20!white] (1.5,0) circle (.3cm);
\fill[black!20!white] (0,1.5) circle (.3cm);
\fill[black!20!white] (1.5,1.5) circle (.3cm);
\fill[black!20!white] (3,0) circle (.3cm);
\fill[black!20!white] (3,1.5) circle (.3cm);

\draw (0,0) circle (.3cm);
\draw (1.5,0) circle (.3cm);
\draw (0,1.5) circle (.3cm);
\draw (1.5,1.5) circle (.3cm);
\draw (3,0) circle (.3cm);
\draw (3,1.5) circle (.3cm);

\draw (0,1.5) node{$v_1$};
\draw (1.5,1.5) node{$u_1$};
\draw (1.5,0) node{$u_2$};
\draw (3,0) node{$v_2$};
\end{scope}

\end{scope}
\end{tikzpicture}$$
\caption{If $G''$ is a doubled path, then $G$ is doubled path equivalent to either $C_2^2\oplus_1 C_2^2$ or $C^2_{1,1,1}$.}
\label{figure:genus2.2}
\end{figure}
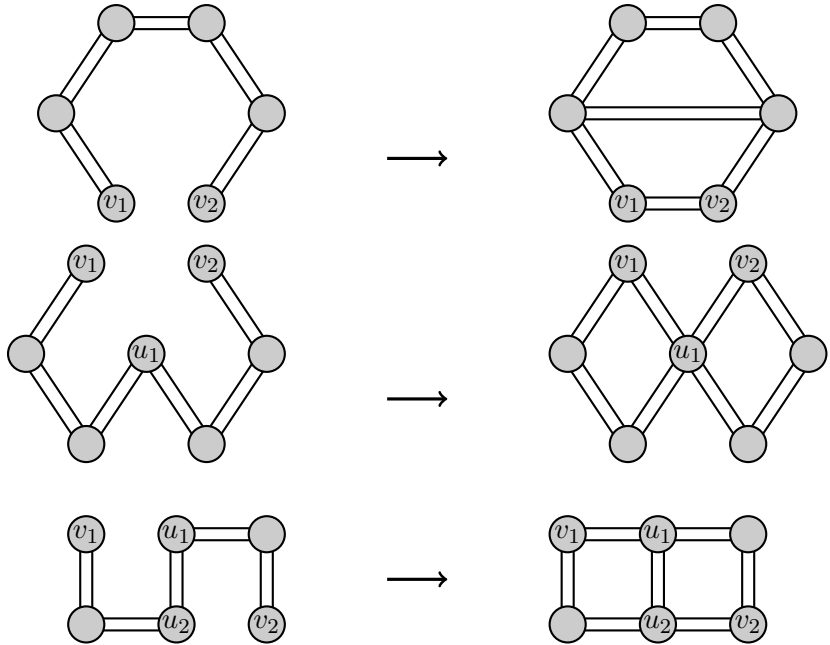

Suppose that $G''$ is a doubled tree with three vertices $v_1$, $v_2$, and $v_3$ of degree two. Let $v$ be the unique vertex in $G''$ of degree six. Since $G''$ contains three vertices of degree two, it follows that two of those vertices must be connected by a pair of parallel edges in $G$. Without loss of generality, assume we add a pair of parallel edges connecting $v_1$ and $v_2$. Also suppose that we add the other pair of parallel edges connecting $v_3$ and some other vertex $u$. If $v$ is between $u$ and $v_3$, then $G$ is doubled path equivalent to $C_{1,1,1}^2$. If $u=v$, then $G$ is doubled path equivalent to $C_2^2\oplus_2 C_2^2$. If $u$ is between $v$ and $v_3$, then $G$ is not reduced. See Figure \ref{figure:genus2.3}.
\begin{figure}[h]
$$\begin{tikzpicture}[scale=.8, thick]
\draw (0,-.1) -- (1.5,-.1); 
\draw (0,.1) -- (1.5,.1);
\draw(1.6,1.4) -- (2.9,1.4) -- (2.9,.1);
\draw (1.4,1.6) -- (3.1,1.6) -- (3.1,-.1);
\draw (1.4,0) -- (1.4,1.4) -- (0,1.4);
\draw (1.6,0) -- (1.6,1.6) -- (0,1.6);

\fill[black!20!white] (0,0) circle (.3cm);
\fill[black!20!white] (1.5,0) circle (.3cm);
\fill[black!20!white] (0,1.5) circle (.3cm);
\fill[black!20!white] (1.5,1.5) circle (.3cm);
\fill[black!20!white] (3,0) circle (.3cm);
\fill[black!20!white] (3,1.5) circle (.3cm);

\draw (0,0) circle (.3cm);
\draw (1.5,0) circle (.3cm);
\draw (0,1.5) circle (.3cm);
\draw (1.5,1.5) circle (.3cm);
\draw (3,0) circle (.3cm);
\draw (3,1.5) circle (.3cm);

\draw (0,0) node{$v_1$};
\draw (0,1.5) node{$v_2$};
\draw (3,0) node{$v_3$};
\draw (1.5,0) node{$u$};

\draw[very thick, ->] (5,.75) -- (6,.75);

\begin{scope}[xshift = 8 cm]
\draw (1.5,1.6) -- (-.1,1.6) -- (-.1,-.1) -- (1.6,-.1) -- (1.6,1.4) -- (2.9,1.4) -- (2.9,.1) -- (1.5,.1);
\draw (1.5,1.4) -- (.1,1.4) -- (.1,.1) -- (1.4,.1) -- (1.4,1.6) -- (3.1,1.6) -- (3.1,-.1) -- (1.5,-.1);

\fill[black!20!white] (0,0) circle (.3cm);
\fill[black!20!white] (1.5,0) circle (.3cm);
\fill[black!20!white] (0,1.5) circle (.3cm);
\fill[black!20!white] (1.5,1.5) circle (.3cm);
\fill[black!20!white] (3,0) circle (.3cm);
\fill[black!20!white] (3,1.5) circle (.3cm);

\draw (0,0) circle (.3cm);
\draw (1.5,0) circle (.3cm);
\draw (0,1.5) circle (.3cm);
\draw (1.5,1.5) circle (.3cm);
\draw (3,0) circle (.3cm);
\draw (3,1.5) circle (.3cm);
\draw (0,0) node{$v_1$};
\draw (0,1.5) node{$v_2$};
\draw (3,0) node{$v_3$};
\draw (1.5,0) node{$u$};
\end{scope}

\begin{scope}[yshift=-4cm,xshift=-1cm]

\draw (0,1.7) -- (1,.2) -- (2,1.7) -- (3,.2) -- (4,1.7) -- (3,3.2);
\draw (0,1.3) -- (1,-.2) -- (2,1.3) -- (3,-.2) -- (4,1.3) -- (3,2.8);
\draw (2,1.7) -- (1,3.2);
\draw (2,1.3) -- (1,2.8);

\fill[black!20!white] (0,1.5) circle (.3cm);
\fill[black!20!white] (1,0) circle (.3cm);
\fill[black!20!white] (1,3) circle (.3cm);
\fill[black!20!white] (2,1.5) circle (.3cm);
\fill[black!20!white] (3,0) circle (.3cm);
\fill[black!20!white] (3,3) circle (.3cm);
\fill[black!20!white] (4,1.5) circle (.3cm);

\draw (0,1.5) circle (.3cm);
\draw (1,0) circle (.3cm);
\draw (1,3) circle (.3cm);
\draw (2,1.5) circle (.3cm);
\draw (3,0) circle (.3cm);
\draw (3,3) circle (.3cm);
\draw (4,1.5) circle (.3cm);

\draw (0,1.5) node{$v_1$};
\draw (1,3) node{$v_2$};
\draw (2,1.5) node{$u$};
\draw (3,3) node{$v_3$};

\draw[very thick, ->] (6,.75) -- (7,.75);

\begin{scope}[xshift=9cm]
\draw (1,3.2) -- (0,1.7) -- (1,.2) -- (2,1.7) -- (3,.2) -- (4,1.7) -- (3,3.2) -- (2,1.7) -- (1,3.2);
\draw (1,2.8) -- (0,1.3) -- (1,-.2) -- (2,1.3) -- (3,-.2) -- (4,1.3) -- (3,2.8) -- (2,1.3) -- (1,2.8);

\fill[black!20!white] (0,1.5) circle (.3cm);
\fill[black!20!white] (1,0) circle (.3cm);
\fill[black!20!white] (1,3) circle (.3cm);
\fill[black!20!white] (2,1.5) circle (.3cm);
\fill[black!20!white] (3,0) circle (.3cm);
\fill[black!20!white] (3,3) circle (.3cm);
\fill[black!20!white] (4,1.5) circle (.3cm);

\draw (0,1.5) circle (.3cm);
\draw (1,0) circle (.3cm);
\draw (1,3) circle (.3cm);
\draw (2,1.5) circle (.3cm);
\draw (3,0) circle (.3cm);
\draw (3,3) circle (.3cm);
\draw (4,1.5) circle (.3cm);
\draw (0,1.5) node{$v_1$};
\draw (1,3) node{$v_2$};
\draw (2,1.5) node{$u$};
\draw (3,3) node{$v_3$};
\end{scope}

\end{scope}

\end{tikzpicture}$$
\caption{If $G''$ is a doubled tree with three vertices of degree two, then $G$ is doubled path equivalent to either $C^2_{1,1,1}$ or $C_2^2\oplus_1 C_2^2$.}
\label{figure:genus2.3}
\end{figure}
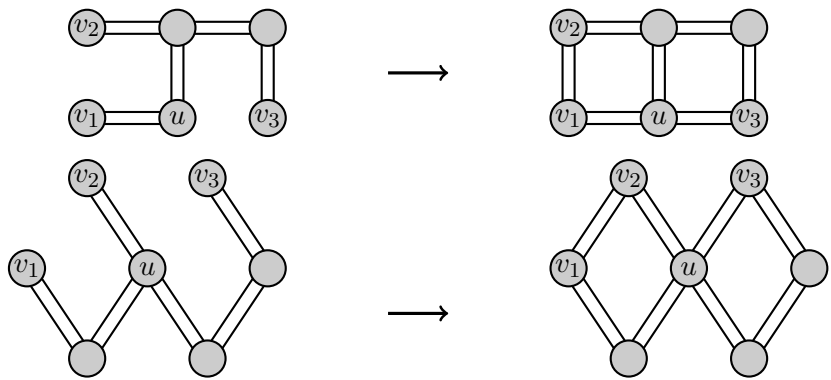

Suppose that $G''$ is a doubled tree with four vertices of degree two. Then one must add one pair of parallel edges connecting two of the degree two vertices and another pair of parallel edges connecting the other two of the degree two vertices. Furthermore $G''$ either contains two vertices of degree six or one vertex of degree eight. If $G''$ contains two vertices of degree six, then $G$ is either not reduced or doubled path equivalent to $C^2_{1,1,1}$. If $G''$ contains a vertex of degree eight, then $G$ is doubled path equivalent to $C_2^2\oplus_1 C_2^2$. See Figure \ref{figure:genus2.4}.

\begin{figure}[h]
$$\begin{tikzpicture}[scale=.8, thick]
\draw (0,-.1) -- (3,-.1);
\draw (0,.1) -- (3,.1);
\draw (0,1.6) -- (3,1.6);
\draw (0,1.4) -- (3,1.4);
\draw (1.4,0) -- (1.4,1.5);
\draw (1.6,0) -- (1.6,1.5);

\fill[black!20!white] (0,0) circle (.3cm);
\fill[black!20!white] (1.5,0) circle (.3cm);
\fill[black!20!white] (0,1.5) circle (.3cm);
\fill[black!20!white] (1.5,1.5) circle (.3cm);
\fill[black!20!white] (3,0) circle (.3cm);
\fill[black!20!white] (3,1.5) circle (.3cm);

\draw (0,0) circle (.3cm);
\draw (1.5,0) circle (.3cm);
\draw (0,1.5) circle (.3cm);
\draw (1.5,1.5) circle (.3cm);
\draw (3,0) circle (.3cm);
\draw (3,1.5) circle (.3cm);

\draw (0,0) node{$v_1$};
\draw (0,1.5) node{$v_2$};
\draw (3,0) node{$v_3$};
\draw (3,1.5) node{$v_4$};

\draw[very thick, ->] (5,.75) -- (6,.75);

\begin{scope}[xshift = 8 cm]
\draw (1.5,1.6) -- (-.1,1.6) -- (-.1,-.1) -- (1.6,-.1) -- (1.6,1.4) -- (2.9,1.4) -- (2.9,.1) -- (1.5,.1);
\draw (1.5,1.4) -- (.1,1.4) -- (.1,.1) -- (1.4,.1) -- (1.4,1.6) -- (3.1,1.6) -- (3.1,-.1) -- (1.5,-.1);

\fill[black!20!white] (0,0) circle (.3cm);
\fill[black!20!white] (1.5,0) circle (.3cm);
\fill[black!20!white] (0,1.5) circle (.3cm);
\fill[black!20!white] (1.5,1.5) circle (.3cm);
\fill[black!20!white] (3,0) circle (.3cm);
\fill[black!20!white] (3,1.5) circle (.3cm);

\draw (0,0) circle (.3cm);
\draw (1.5,0) circle (.3cm);
\draw (0,1.5) circle (.3cm);
\draw (1.5,1.5) circle (.3cm);
\draw (3,0) circle (.3cm);
\draw (3,1.5) circle (.3cm);
\draw (0,0) node{$v_1$};
\draw (0,1.5) node{$v_2$};
\draw (3,0) node{$v_3$};
\draw (3,1.5) node{$v_4$};
\end{scope}

\begin{scope}[yshift=-4cm,xshift=-1cm]

\draw (0,1.7) -- (1,.2) -- (2,1.7) -- (3,.2) -- (4,1.7);
\draw (0,1.3) -- (1,-.2) -- (2,1.3) -- (3,-.2) -- (4,1.3);
\draw (2,1.7) -- (1,3.2);
\draw (2,1.3) -- (1,2.8);
\draw (2,1.7) -- (3,3.2);
\draw (2,1.3) -- (3,2.8);

\fill[black!20!white] (0,1.5) circle (.3cm);
\fill[black!20!white] (1,0) circle (.3cm);
\fill[black!20!white] (1,3) circle (.3cm);
\fill[black!20!white] (2,1.5) circle (.3cm);
\fill[black!20!white] (3,0) circle (.3cm);
\fill[black!20!white] (3,3) circle (.3cm);
\fill[black!20!white] (4,1.5) circle (.3cm);

\draw (0,1.5) circle (.3cm);
\draw (1,0) circle (.3cm);
\draw (1,3) circle (.3cm);
\draw (2,1.5) circle (.3cm);
\draw (3,0) circle (.3cm);
\draw (3,3) circle (.3cm);
\draw (4,1.5) circle (.3cm);

\draw (0,1.5) node{$v_1$};
\draw (1,3) node{$v_2$};
\draw (3,3) node{$v_3$};
\draw (4,1.5) node{$v_4$};

\draw[very thick, ->] (6,.75) -- (7,.75);

\begin{scope}[xshift=9cm]
\draw (1,3.2) -- (0,1.7) -- (1,.2) -- (2,1.7) -- (3,.2) -- (4,1.7) -- (3,3.2) -- (2,1.7) -- (1,3.2);
\draw (1,2.8) -- (0,1.3) -- (1,-.2) -- (2,1.3) -- (3,-.2) -- (4,1.3) -- (3,2.8) -- (2,1.3) -- (1,2.8);

\fill[black!20!white] (0,1.5) circle (.3cm);
\fill[black!20!white] (1,0) circle (.3cm);
\fill[black!20!white] (1,3) circle (.3cm);
\fill[black!20!white] (2,1.5) circle (.3cm);
\fill[black!20!white] (3,0) circle (.3cm);
\fill[black!20!white] (3,3) circle (.3cm);
\fill[black!20!white] (4,1.5) circle (.3cm);

\draw (0,1.5) circle (.3cm);
\draw (1,0) circle (.3cm);
\draw (1,3) circle (.3cm);
\draw (2,1.5) circle (.3cm);
\draw (3,0) circle (.3cm);
\draw (3,3) circle (.3cm);
\draw (4,1.5) circle (.3cm);

\draw (0,1.5) node{$v_1$};
\draw (1,3) node{$v_2$};
\draw (3,3) node{$v_3$};
\draw (4,1.5) node{$v_4$};

\end{scope}

\end{scope}

\end{tikzpicture}$$
\caption{If $G''$ is a doubled tree with four vertices of degree two, then $G$ is doubled path equivalent to either $C^2_{1,1,1}$ or $C_2^2\oplus_1 C_2^2$.}
\label{figure:genus2.4}
\end{figure}
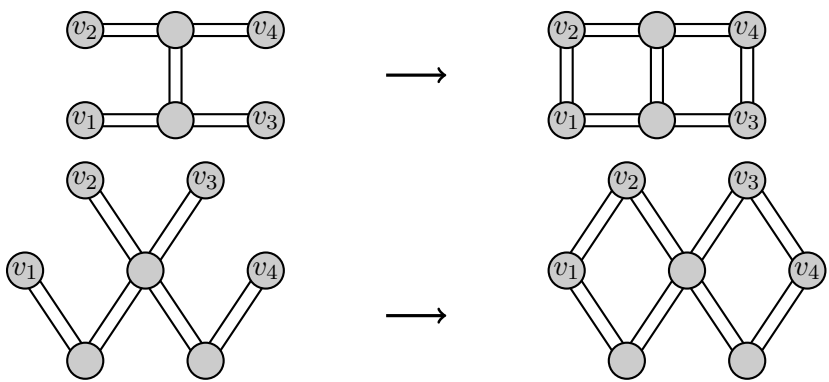

Suppose that $G''=C_4(p,q,r,s)$ for some non-negative integers $p$, $q$, $r$, and $s$. Since $G''$ has four vertices of degree two, each pair of parallel edges added to $G''$ must connect two of the degree two vertices. The resulting graph is $K_4(\widetilde{p},\widetilde{q})$ for some values of $\widetilde{p}$ and $\widetilde{q}$. Thus $G$ is doubled path equivalent to $K_4(2,2)$.

Suppose that $G''=\widetilde{K}_4(p,q)\oplus_2\widetilde{K}_4 (r,s)$ for some non-negative integers $p$, $q$, $r$, and $s$. Since $G''$ has four vertices of degree two, each pair of parallel edges added to $G''$ must connect two of the degree two vertices. The resulting graph is $K_4(\widetilde{p})\oplus_2 K_4(\widetilde{q})$ for some values of $\widetilde{p}$ and $\widetilde{q}$. Thus $G$ is doubled path equivalent to $K_4(2)\oplus_2 K_4(2)$.

Hence if $G$ is a reduced alternating decomposition graph with $g_T(G)=2$, then $G$ is doubled path equivalent to one of $C_2^2\sqcup C_2^2$,  $C_2^2\oplus_1 C_2^2$, $C^2_{1,1,1}$, $K_4(2,2)$, or $K_4(2)\oplus_2 K_4(2)$.
\end{proof}

Suppose that $G$ has $v(G)$ vertices, $e(G)$ edges, and $k(G)$ components. The {\em nullity} $n(G)$ of $G$ is defined as 
$$n(G) = e(G) - v(G) + k(G).$$
One can equivalently define the nullity of $G$ to be the nullity of the incidence matrix of $G$ or to be the number of edges not in a maximal spanning forest of $G$. The simplification $\operatorname{si}(G)$ of the graph $G$ is the  graph obtained from $G$ by deleting loops and replacing each set of multiple edges connecting two distinct vertices $v_1$ and $v_2$ with a single edge connecting $v_1$ and $v_2$. As long as an alternating decomposition graph $G$ does not have any vertices of degree two, its Turaev genus is bounded below by the nullity of the simplification of $G$ in the following manner.

\begin{proposition}
Let $G$ be an alternating decomposition graph, and let $\operatorname{si}(G)$ be the simplification of $G$. If $G$ contains no vertices of degree two, then $3g_T(G) \geq n(\operatorname{si}(G))$.
\end{proposition}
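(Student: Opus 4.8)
The plan is to prove $3g_T(G)\ge n(\operatorname{si}(G))$ by induction on the number of edges of $G$, running the recursive algorithm of Corollary~\ref{cor:TuraevAlgorithm} and tracking the effect of each reduction step on $g_T$ and on $n(\operatorname{si})$ simultaneously. The bookkeeping for the latter is light: simplification changes neither the vertex set nor the number of components, so $n(\operatorname{si}(G))=e(\operatorname{si}(G))-v(G)+k(G)$, and I only need to watch how $e(\operatorname{si}(\cdot))$ moves under deletion of a parallel pair and under contraction of a degree-two vertex. The base case $e(G)=0$ is immediate, since then $g_T(G)=0=n(\operatorname{si}(G))$. For the inductive step, assume $G$ has at least one edge and no vertex of degree two; by Lemma~\ref{lemma:degree2}(2) there is a pair of parallel edges $\{e_1,e_2\}$, say joining $u$ and $v$.

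If deleting $\{e_1,e_2\}$ disconnects $G$, then $e_1,e_2$ must be the only $u$--$v$ edges and $u,v$ have no common neighbor; in this case I would pass to the contraction $G''=G/\{e_1,e_2\}$. By Corollary~\ref{cor:TuraevAlgorithm}(2), $g_T(G'')=g_T(G)$. Since $u$ and $v$ have no common neighbor, $\operatorname{si}(G'')$ is obtained from $\operatorname{si}(G)$ by identifying $u$ and $v$ and discarding the resulting loop, so $n(\operatorname{si}(G''))=n(\operatorname{si}(G))$; and since the merged vertex has degree $\deg_G u+\deg_G v-4\ge 4$, the graph $G''$ still has no vertex of degree two. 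As $e(G'')<e(G)$, the inductive hypothesis applied to $G''$ gives $n(\operatorname{si}(G))=n(\operatorname{si}(G''))\le 3g_T(G'')=3g_T(G)$.

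Otherwise deleting $\{e_1,e_2\}$ preserves the number of components, so for $G_1=G-\{e_1,e_2\}$ we have $g_T(G_1)=g_T(G)-1$ and $n(\operatorname{si}(G_1))\ge n(\operatorname{si}(G))-1$. The only vertices of $G_1$ that can have degree two are $u$ and $v$, and only when their degree in $G$ was exactly $4$. I would then contract all degree-two vertices of $G_1$, along with any that appear during this process, producing a graph $G'$ with no vertex of degree two, with $g_T(G')=g_T(G)-1$ and $e(G')<e(G)$. The inductive hypothesis gives $n(\operatorname{si}(G'))\le 3g_T(G)-3$, so it suffices to prove
\[
n(\operatorname{si}(G))-n(\operatorname{si}(G'))\le 3 .
\]
The local facts one uses here are: contracting a degree-two vertex whose two neighbors coincide leaves $n(\operatorname{si})$ unchanged; and, because every graph produced by the algorithm is bipartite (Proposition~\ref{prop:PlaneRecursive}), a degree-two vertex with distinct neighbors $x\ne y$ can never lie in a triangle, so contracting it drops $n(\operatorname{si})$ by exactly the number of common neighbors of $x$ and $y$ other than that vertex. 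Combined with the deletion's contribution of at most $1$, the displayed inequality reduces to a statement about the cascade of contractions alone: the common-neighbor counts accumulated by its distinct-neighbor contractions must sum to at most $2$.

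I expect this last bound to be the main obstacle: it is where one must genuinely exploit that $G$ is \emph{planar} (and degree-two-free), not merely bipartite with even degrees, rather than applying the recursive genus algorithm formally. The point to make precise is that the vertex-clusters merged during the cascade, which all sprout from the degree-$4$ vertex $u$ (or $v$) in a planar configuration, cannot acquire more than two common neighbors in total; I would try to pin this down with an Euler-characteristic argument on the sphere embedding $\mathbb{G}$, tracking the number of faces through the deletion and the contractions and relating it to the number of ``excess'' parallel edges of $G$ (since adding a parallel edge adds one face, and $n(\operatorname{si}(G))=f(\mathbb{G})-k(G)-p(G)$ where $p(G)$ counts excess parallel edges). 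Everything else — the base case, the disconnecting case, and the routine accounting of how many reduction steps of each type occur — is straightforward by comparison.
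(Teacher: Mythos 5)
Your inductive framework, the base case, and the disconnecting case are all sound --- and your route through the contraction $G/\{e_1,e_2\}$, which stays degree-two-free and changes neither $g_T$ nor $n(\operatorname{si})$, is if anything tidier than the paper's treatment of that case. The genuine gap is in the main case. Having reduced everything to the inequality $n(\operatorname{si}(G))-n(\operatorname{si}(G'))\le 3$, i.e.\ to the claim that the distinct-neighbor contractions in your cascade accumulate at most two common neighbors in total, you do not prove that claim: you state that it is ``the main obstacle'' and that you \emph{would try} an Euler-characteristic argument on $\mathbb{G}$. But that claim is the entire substance of the proposition --- everything else is, as you say yourself, bookkeeping --- and it is not a formality. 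It is false without the global hypotheses: in $K_{2,4}$ (planar, bipartite, even degrees) contracting a degree-two vertex drops $n(\operatorname{si})$ by $3$, so the bound must genuinely use planarity together with the fact that every vertex other than the two endpoints of the deleted pair has degree at least four, and no such argument appears in the proposal. As submitted, this is an outline whose decisive step is conjectured, not proved.

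For comparison, the paper closes the same step by asserting that each two-path contraction at $v_1$ or $v_2$ lowers $n(\operatorname{si})$ by at most one, which yields its ``$+2$''; your reduction lands in essentially the same place but is more candid that a cascade of contractions can occur and that the bound needs an argument. Be warned that the bound is delicate: for the planar bipartite graph on $u,v,x,y,z_1,z_2$ with doubled edges $uv$, $vz_1$, $xz_2$, $yz_2$ and single edges $ux$, $uy$, $xz_1$, $yz_1$ (all degrees four, simplification $K_{3,3}$ minus an edge), deleting the doubled $uv$ leaves $u$ of degree two with neighbors $x,y$, which have \emph{two} common neighbors $z_1,z_2$; so a single contraction already drops $n(\operatorname{si})$ by two and the total drop is exactly three. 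Thus the per-contraction estimate cannot be ``at most one'' in general; a correct proof has to control the sum over the whole cascade, which is exactly the planarity/parity argument you deferred. Until that inequality is actually established, the induction does not close and the proposition is not proved.
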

\begin{proof}
Since $G$ is assumed to have no vertices of degree two, the base case is $G=C_2^2$, a doubled cycle of length two, i.e. $G$ contains two vertices with four parallel edges between them. In this case $g_T(G)=1$ and $n(\operatorname{si}(G))=0$, and so the result holds.

Now suppose that the desired inequality holds for all alternating decomposition graphs with no vertices of degree two that have fewer edges than $G$. Since $G$ does not contain any vertices of degree two, Lemma \ref{lemma:degree2} implies that $G$ contains a pair of parallel edges $e_1$ and $e_2$. Let $G'=G-\{e_1,e_2\}$, and let $e_{12}$ be the edge in $\operatorname{si}(G)$ corresponding to $e_1$ and $e_2$.

Suppose that $k(G') = k(G)+1$. Then $g_T(G') = g_T(G)$. The edge $e_{12}$ is a bridge in $\operatorname{si}(G)$, and thus $n(\operatorname{si}(G'))=n(\operatorname{si}(G))$. By induction, $3g_T(G')\geq n(\operatorname{si}(G'))$, and hence $3g_T(G)\geq n(\operatorname{si}(G))$.

Suppose that $k(G')=k(G)$. Then $g_T(G) = g_T(G')+1$ and $n(\operatorname{si}(G)) \leq n(\operatorname{si}(G'))+1$. Let $v_1$ and $v_2$ be the two vertices incident to $e_1$ and $e_2$ in $G$. For $i=1$ or $2$, we consider three cases:
\begin{enumerate}
\item the degree of $v_i$ is greater than two, 
\item the vertex $v_i$ has degree two and two distinct neighbors, or
\item the vertex $v_i$ has degree two and only one distinct neighbor.
\end{enumerate}

In order to apply our inductive hypothesis, we eliminate all vertices of degree two in $G'$ as follows. If $\deg v_i >2$, then nothing needs to be done. If $\deg v_i=2$ and $v_i$ has two distinct neighbors, then perform a two-path contraction at $v_i$. A two-path contraction does not change the Turaev genus of the graph but could decrease the nullity of the simplification of the graph by one. Suppose that $\deg v_i=2$ and $v_i$ has only one neighbor. Let $P_i$ be the maximal doubled path embedded in $G'$ with endpoints $v_i$ and $u_i$ such that every interior vertex of $P_i$ has exactly two neighbors. If every edge in $P_i$ is contracted, then both the Turaev genus and the nullity of the simplification of the resulting graph remain unchanged. 

Let $G''$ be the graph obtained from $G'$ by performing the above operations on $v_1$ and $v_2$. Then $G''$ has no vertices of degree two.  We have $g_T(G'') = g_T(G')$ and $n(\operatorname{si}(G''))+2\geq n(\operatorname{si}(G'))$. Therefore $g_T(G) = g_T(G'')+1$ and $n(\operatorname{si}(G)) \leq n(\operatorname{si}(G''))+3$. By the inductive hypothesis, we have $n(\operatorname{si}(G'')) \leq 3g_T(G'')$. Therefore
$$n(\operatorname{si}(G))\leq n(\operatorname{si}(G''))+3 \leq 3g_T(G'') + 3 = 3g_T(G).$$
\end{proof}

We will use the following lemma in the proof of Theorem \ref{thm:arbgenus}.
\begin{lemma}
\label{lemma:finite}
Let $n_1$ and $n_2$ be non-negative integers. There are a finite number of graphs $G$ such that $n(G)=n_1$ and such that $G$ contains  $n_2$ vertices of degree two.
\end{lemma}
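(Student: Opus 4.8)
The plan is to deduce from the two hypotheses a bound on both the number of vertices and the number of edges of $G$ in terms of $n_1$ and $n_2$ alone, and then to observe that only finitely many graphs (with loops and multiple edges permitted) have a bounded number of vertices and a bounded number of edges: such a graph is determined up to isomorphism by its number of vertices $v$ together with the multiplicities assigned to the at most $\binom{v}{2}+v$ loop- and edge-slots, and if the total number of edges is bounded then each multiplicity is bounded, so there are only finitely many choices. Thus the whole proof reduces to producing the two bounds.

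First I would dispose of vertices of degree zero and one. This is the one point that needs care: without excluding them the count is not finite (for instance the stars $K_{1,m}$ with $m\ge 3$ all have nullity zero and no vertex of degree two), so one must invoke the standing assumption — harmless in every application of the lemma, since alternating decomposition graphs have no isolated vertices and only even-degree vertices — that $G$ has minimum degree at least two. Granting $\delta(G)\ge 2$, write $n_2$ for the number of degree-two vertices and $v_{\ge 3}$ for the number of vertices of degree at least three, so that $v(G)=n_2+v_{\ge 3}$. The handshaking lemma gives $2e(G)=\sum_{x}\deg x\ge 2n_2+3v_{\ge 3}$, while the definition of nullity gives $e(G)=n(G)+v(G)-k(G)=n_1+n_2+v_{\ge 3}-k(G)$.

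Combining these two relations yields $2n_1+2v_{\ge 3}-2k(G)\ge 3v_{\ge 3}$, hence $v_{\ge 3}\le 2n_1$; therefore $v(G)\le 2n_1+n_2$ and $e(G)=n_1+v(G)-k(G)\le 3n_1+n_2$. Feeding these bounds into the counting observation of the first paragraph completes the argument. The only genuine obstacle is the first step — recognizing that a minimum-degree hypothesis is needed and checking that it is in force in the context where the lemma is applied — after which everything is elementary Euler-characteristic bookkeeping; the handshaking/nullity manipulation of the second and third paragraphs is routine and can be written out in a few lines.
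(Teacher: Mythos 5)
Your argument is correct for the amended statement, and it takes a genuinely different route from the paper. The paper reduces to connected graphs (nullity is additive), picks a spanning tree $T$, bounds the number $d_{12}(T)$ of vertices of degree one or two in $T$ by $2n_1+n_2$, and then argues that there are finitely many trees with bounded $d_{12}$ and finitely many ways to add the $n_1$ extra edges. You instead bound $v(G)$ and $e(G)$ directly by playing the handshaking lemma against $e(G)=n_1+v(G)-k(G)$, getting $v_{\ge 3}\le 2n_1$, $v(G)\le 2n_1+n_2$, $e(G)\le 3n_1+n_2$, and then invoke the finiteness of multigraphs with bounded vertex and edge counts; this is shorter and dispenses with spanning trees entirely. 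Your preliminary observation is also on target and in fact sharper than the paper: as literally stated the lemma is false (the stars $K_{1,m}$, $m\ge 3$, give infinitely many graphs with nullity $0$ and no degree-two vertices), and the paper's own inequality $d_{12}(T)\le 2n_1+n_2$ tacitly uses the restriction you make explicit, since a leaf of $T$ that remains a vertex of degree at most one in $G$ is counted on the left but by neither term on the right. So some hypothesis excluding vertices of degree at most one (or a third parameter bounding them) is genuinely needed, and under $\delta(G)\ge 2$ your bookkeeping is correct.

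The one inaccuracy is your claim that the added hypothesis is ``harmless in every application'': the lemma is applied in the proof of Theorem \ref{thm:arbgenus} not to alternating decomposition graphs but to their simplifications $\operatorname{si}(G_c)$, and these can have vertices of degree one — already $\operatorname{si}(C_2^2)=K_2$. To make your amended lemma serve that application one must also bound the number of degree-one (and isolated) vertices of $\operatorname{si}(G_c)$; this can be done, since a degree-one vertex of $\operatorname{si}(G_c)$ comes from a vertex of $G_c$ all of whose (at least four) edges go to a single neighbor, and deleting parallel pairs at such vertices shows there are at most $g_T(G_c)$ of them. This is a defect the paper shares rather than a flaw in your computation, but the justification you give (properties of alternating decomposition graphs themselves) does not apply to the graphs the lemma is actually used on, so that extra step should be acknowledged.
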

\begin{proof}
Because nullity is additive with respect to disjoint union, it suffices to show the above statement for connected graphs. Let $T$ be a tree, and let $d_{12}(T)$ be the number of degree one or degree two vertices in $T$. Suppose that $T$ is a spanning tree of a graph $G$ with $n(G)=n_1$ where $G$ contains $n_2$ vertices of degree two. Hence $G$ is obtained from $T$ by adding $n_1$ edges. Each of the $n_1$ edges added to $T$ can make at most two of the vertices of degree one or two in $T$ have degree larger than two in $G$. Also, every degree two vertex in $G$ is either a degree one or a degree two vertex in $T$. Therefore $d_{12}(T) \leq 2n_1 + n_2$.

Every tree can be obtained from a single vertex by repeatedly adding pendant edges. Each pendant edge addition increases $d_{12}(T)$, and for a given tree, there are only finitely many ways to add a pendant edge. Thus the number of trees $T$ with $d_{12}(T) \leq 2n_1 + n_2$ is finite. There are only a finite number of ways to add $n_1$ edges to such a tree, and hence there exists a finite number of graphs $G$ with nullity $n_1$ that contain $n_2$ vertices of degree two. 
\end{proof}

We end the paper with the proof of Theorem \ref{thm:arbgenus}.
\begin{proof}[Proof of Theorem \ref{thm:arbgenus}]
For each doubled path equivalence class $c$ of reduced alternating decomposition graphs $G$ with $g_T(G)=k$, let $G_c$ be a representative such that no other representative of $c$ can be obtained from $G_c$ via a sequence of doubled path contractions. Let $V'$ be the set of vertices $v$ in $G$ such that $\deg v = 4$, each $v$ has exactly two distinct neighbors $u$ and $w$, there are two edges incident to both $u$ and $v$, and there are two edges incident to both $w$ and $v$. 

For each vertex $v\in V'$, there are two pairs of parallel edges incident to $v$, say parallel edges $e_{v,1}$ and $e_{v,2}$ and parallel edges $e_{v,3}$ and $e_{v,4}$. Let $E'$ be a set of edges containing exactly one pair of these parallel edges for each  $v\in V'$, that is
$E' = \{e_{v,1}, e_{v,2}~|~v\in V'\}.$
We claim that the graph $G_c - E'$, i.e. the graph obtained by deleting the edges set $E'$ from $G_c$, has the same number of components as $G_c$.

By way of contradiction, suppose that $G_c-E'$ has more components than $G_c$. Then there exists a minimal subset $E''$ of $E'$ such that $G_c-E''$ has one more component than $G_c$, but $G_c-S$ has the same number of components as $G_c$ for any proper subset $S$ of $E''$. Note that if an edge $e_{v,1}$ is in $E''$, then its parallel edge $e_{v,2}$ is also in $E''$. Therefore if $G_c'' = G_c / E''$, i.e. the contraction of the edges in $E''$ from $G_c$, then $G_c''$ is obtained from $G_c$ via a sequence of doubled path contractions.

Let $C''$ be a cycle in $G_c''$. Then there is a cycle $C$ in $G_c$ such that $C'' = C / (C\cap E'')$. Since $G_c$ is bipartite, it follows that $C$ has an even number of edges. Since adding any single edge of $E''$ to $G_c-E''$ connects two components of $G_c$, it follows that $C\cap E''$ has an even number of edges. Therefore, $C''$ has an even number of edges. Because each cycle of $G_c''$ has an even number of edges, the graph $G_c''$ is bipartite. Thus $G_c''$ is an alternating decomposition graph, which contradicts that no other representative of $c$ can be obtained from $G_c$ via a sequence of doubled path contractions.

Therefore $G_c - E'$ has the same number of components as $G_c$. Hence deleting each pair of parallel edges in $E'$ from $G_c$ decreases the Turaev genus by one, which implies that $|E'| \leq 2k$ and $|V'|\leq k$. Each vertex $v\in V'$ has degree two in the simplification $\operatorname{si}(G_c)$.

Any other vertex of degree two in $\operatorname{si}(G_c)$ arises from a vertex $v$ in $G_c$ with two distinct neighbors $v_1$ and $v_2$ such that there are $r$ edges between $v$ and $v_1$ and $s$ edges between $v$ and $v_2$ where $r+s$ is even and $\max \{r,s\}>2$. For each such vertex, there are two parallel edges whose removal decreases Turaev genus by one and does not change the simplification $\operatorname{si}(G_c)$. Because pairs of such vertices could be adjacent, there are at most $2k$ in $G_c$. 

Therefore $\operatorname{si}(G_c)$ has at most $3k$ vertices of degree two. Moreover $3k=3g_T(G_c) \geq n(\operatorname{si}(G_c))$. Because the nullity and the number of degree two vertices are bounded, Lemma \ref{lemma:finite} implies that there are only a finite number of candidates for the graph $\operatorname{si}(G_c)$. Because adding arbitrarily many parallel edges to an alternating decomposition graph increases its Turaev genus without bound, there are only a finite number of alternating decomposition graphs of a fixed Turaev genus whose simplification is a given graph. Therefore, there are only finitely many doubled path equivalence classes of alternating decomposition graphs of Turaev genus $k$.
\end{proof}

\bibliography{AlternatingDecompositions}{}
\bibliographystyle {amsalpha}

\end{document}